\documentclass[11pt]{amsart}   	

\usepackage[margin=1in]{geometry}
\usepackage[T1]{fontenc}
\usepackage{lmodern}
\usepackage{amsmath,amssymb, amsfonts,amsthm,stackrel}
\usepackage[hide links]{hyperref}
\usepackage{mathtools}
\usepackage{hyperref}
\usepackage{tikz-cd}
\usepackage{array}
\tikzcdset{
	every diagram/.append style={
		every inside node/.style={text=.},
		draw=.
	}
}
\usetikzlibrary{decorations.markings}
\usepackage{makecell}
\setcellgapes{4pt}
\usepackage{multicol}
\usepackage[capitalise]{cleveref}
\usepackage[shortlabels]{enumitem}
\usepackage{stmaryrd}

\usepackage[colorinlistoftodos]{todonotes}

\newtheorem{theorem}{Theorem}[section]
\newtheorem{lemma}[theorem]{Lemma}
\newtheorem{corollary}[theorem]{Corollary}
\newtheorem{proposition}[theorem]{Proposition}
\newtheorem{definition-proposition}[theorem]{Definition-Proposition}
\newtheorem{conjecture}[theorem]{Conjecture}
\newtheorem{setup}[theorem]{Setup}

\theoremstyle{definition}
\newtheorem{definition}[theorem]{Definition}

\newtheorem{remark}[theorem]{Remark}

\DeclareMathOperator{\add}{\mathsf{add}}
\DeclareMathOperator{\Filt}{\mathsf{Filt}}
\DeclareMathOperator{\Gen}{\mathsf{Gen}}

\DeclareMathOperator{\Simp}{\mathsf{Simp}}
\DeclareMathOperator{\Cogen}{\mathsf{Cogen}}

\DeclareMathOperator{\ann}{\mathsf{ann}}
\DeclareMathOperator{\FiltGen}{\mathsf{T}}

\DeclareMathOperator{\FiltCogen}{\mathsf{F}}

\newcommand{\J}{\mathsf J}

\newcommand{\T}{\mathcal T}
\newcommand{\U}{\mathcal U}
\newcommand{\X}{\mathcal X}
\newcommand{\F}{\mathcal F}
\newcommand{\W}{\mathcal W}
\DeclareMathOperator{\ttop}{\mathsf{top}}
\DeclareMathOperator{\tors}{\mathsf{tors}}

\DeclareMathOperator{\mods}{\mathsf{mod}}

\DeclareMathOperator{\Hom}{\mathrm{Hom}}
\DeclareMathOperator{\Ext}{\mathrm{Ext}}

\DeclareMathOperator{\End}{\mathrm{End}}

\DeclareMathOperator{\rad}{\mathrm{rad}}

\renewcommand{\P}{\mathcal{P}}

\newcommand{\Pmod}{P}
\newcommand{\Pmods}{P_\mathrm{s}}
\newcommand{\Pmodns}{P_\mathrm{ns}}

\newcommand{\op}{\mathrm{op}}

\newcommand{\V}{\mathcal V}
\newcommand{\A}{\mathcal A}
\newcommand{\B}{\mathcal B}

\newcommand{\SC}{\mathcal S}
\newcommand{\C}{\mathcal C}

\DeclareMathOperator{\rk}{\mathsf{rk}}

\newcommand{\lperp}[1]{^{\perp} #1 }

\usepackage{xcolor}
\colorlet{brm}{blue} 

\usepackage{soul}

\title{Lifting $\tau$-exceptional sequences}
\author{Aslak B. Buan}
\address{Department of Mathematical Sciences, Norwegian University of Science and Technology (NTNU), 7491 Trondheim, NORWAY}
\email{aslak.buan@ntnu.no}
\author{Eric J. Hanson}
\address{Department of Mathematics, North Carolina State University, Raleigh, NC 27695, USA}
\email{ehansonmath@gmail.com}
\author{Bethany R. Marsh}
\address{School of Mathematics, University of Leeds, Leeds LS2 9JT, U.K.}
\email{B.R.Marsh@leeds.ac.uk}

\begin{document}
	\begin{abstract}
		We study the behaviour of $\tau$-exceptional sequences under \sloppy passage from a finite-dimensional algebra $\Lambda$ to a factor algebra $\Lambda/I$. 
		Assuming that $\Lambda$ is $\tau$-tilting finite and that $I\subseteq \operatorname{rad}\Lambda$, we construct, for every $r$ less than or equal to the number of simple $\Lambda$-modules, an injective map from the $\tau$-exceptional sequences of length $r$ over $\Lambda/I$ to those over $\Lambda$. The construction is recursive and is based on lifting indecomposable $\tau$-rigid modules and comparing their associated $\tau$-perpendicular categories. At each stage, we construct a right exact functor between the relevant $\tau$-perpendicular categories which, after identifying them with module categories, is naturally isomorphic to extension of scalars along a quotient $\Gamma\to\Gamma/J$, where $J\subseteq \operatorname{rad}\Gamma$. These functors send each lifted term to an object in the additive closure of the corresponding original term. Finally, we conjecture connections with known bijections for representations of quivers over local algebras and for Nakayama algebras.
	\end{abstract}
	\thanks{ABB was supported by 
		grant number FRINAT 301375 from the Norwegian Research Council. EJH was supported by an AMS-Simons travel grant. This work was supported by the Engineering and Physical Sciences Research Council [grant number EP/W007509/1] and by a CAS-alumni grant from the Centre for Advanced Study, Oslo.   }
	
	\maketitle
	
	\section{Introduction}

	The notion of $\tau$-exceptional sequences was introduced in \cite{bm} as a generalization of exceptional sequences over hereditary algebras, using ideas from $\tau$-tilting theory \cite{air} and $\tau$-tilting reduction \cite{jasso}.
	
	A $\tau$-exceptional sequence is a finite sequence of indecomposable objects in the category $\mods \Lambda$
	of finite-dimensional modules over an arbitrary fixed finite-dimensional algebra
	$\Lambda$, which induces a flag of wide subcategories of $\mods \Lambda$. Such sequences exist for every finite-dimensional algebra, and the length of the sequence is bounded by $\rk \Lambda$, the number of isomorphism classes of simple $\Lambda$-modules.
	
	Let $\tau$ be the Auslander-Reiten translate in $\mods \Lambda$, where $\Lambda$ is a finite-dimensional algebra. 
	A module $M$ is \emph{$\tau$-rigid} if $\Hom(M,\tau M) = 0$. We let
	$M^{\perp}$ denote the full subcategory of $\mods \Lambda$ generated by objects $\{X \mid \Hom(M,X)= 0 \}$, with ${^\perp M}$ defined dually. The \emph{$\tau$-perpendicular category} 
	$\J(M) = M^{\perp} \cap {^\perp \tau M}$ is, 
	by \cite{jasso}, equivalent to a module category $\mods \Lambda'$.  If $M$ is indecomposable, then $\rk \Lambda' = \rk \Lambda -1$. 
	
	A sequence 
	$(M_1, \dots, M_t)$ of indecomposable objects in $\mods \Lambda$ is a
	\emph{$\tau$-exceptional sequence} \cite{bm}
	if $M_t$ is $\tau$-rigid and the sequence
	$(M_1, \dots, M_{t-1})$ is a $\tau$-exceptional sequence in the $\tau$-perpendicular category $\J(M_t) = M_t^{\perp} \cap {^\perp \tau M_t}$. Note that the Auslander-Reiten translate is in general
	different in $\mods \Lambda$ and in $\J(M_t)$, so e.g., $M_{t-1}$ is not necessarily $\tau$-rigid in $\mods \Lambda$. 
	
	However, 
	we have that 
	each object $M_i$ in the sequence satisfies $\Ext^1(M_i,M_i) = 0$. This
	follows from the Auslander-Reiten formula (see \cite{ass06}) and the fact that 
	$\tau$-perpendicular categories are wide subcategories \cite[Thm. 4.12]{dirrt}. This explains the nomenclature. Note that we necessarily have that $t \le \rk \Lambda $. 
	
	An algebra is \emph{$\tau$-tilting finite} if there are finitely many isomorphism classes of indecomposable $\tau$-rigid modules, see \cite{dij}.
	We study $\tau$-exceptional sequences of factor algebras of $\tau$-tilting finite algebras, and obtain the following main result.
	
	\begin{theorem}\label{thm:main1}
		Let $\Lambda$ be a $\tau$-tilting finite algebra with $\rk \Lambda =n$, and let $I$ be an ideal contained in the radical of $\Lambda$. Then, for any positive integer $r$ with $1\leq r\leq n = \rk \Lambda$ there is an injective map
		$$
		\Delta_I \colon \left\{ \text{\parbox{4.5cm}{\centering $\tau$-exceptional sequences in $\mods \Lambda/I$ of length $r$}} \right\} 
		\rightarrow
		\left\{ \text{\parbox{4.5cm}{\centering $\tau$-exceptional sequences in $\mods \Lambda$ of length $r$}} \right\}
		$$
	\end{theorem}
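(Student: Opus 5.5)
The map will be built by induction on $r$, following the abstract's strategy: lift the last term of the sequence, then pass to the $\tau$-perpendicular categories and repeat. Write $\bar\Lambda=\Lambda/I$, and note $\rk\bar\Lambda=\rk\Lambda$ because $I\subseteq\rad\Lambda$.

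\textbf{Step 1: lifting $\tau$-rigid modules.} Let $M$ be an indecomposable $\tau$-rigid $\bar\Lambda$-module. By the Adachi--Iyama--Reiten correspondence, $\tau$-rigid modules correspond to $2$-term presilting complexes. A presilting complex $\bar P_1\to\bar P_0$ over $\bar\Lambda$ has projective terms of the form $\bar P=P/IP$. Since $\Lambda$ is $\tau$-tilting finite, so is $\bar\Lambda$, and the $g$-vector fans of both algebras are complete. Moreover, every $\bar\Lambda$-module is a $\Lambda$-module, so the semistable (wall-and-chamber) structure of $\bar\Lambda$ refines that of $\Lambda$. Hence each $g$-vector cone of $\bar\Lambda$ lies in a cone of $\Lambda$. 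Using this, define $\widetilde M$ to be the indecomposable $\tau$-rigid $\Lambda$-module whose $g$-vector equals that of $M$. If the $g$-vector of $M$ is not a ray of the fan of $\Lambda$, I would instead choose a canonical ray, for instance one minimal in the Bongartz-completion order. Then check that $\widetilde M\otimes_\Lambda\bar\Lambda\in\add M$. The guiding case is $M=\bar P$, where $\widetilde M=P$ and $P\otimes\bar\Lambda=\bar P$. Injectivity at length one follows from the fact that $g$-vectors determine $\tau$-rigid modules.

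\textbf{Step 2: comparing $\tau$-perpendicular categories.} By Jasso, $\J(\widetilde M)\simeq\mods\Gamma$ and $\J(M)\simeq\mods\bar\Gamma$. Define $F\colon\J(\widetilde M)\to\J(M)$ by $X\mapsto f_{M}(X\otimes_\Lambda\bar\Lambda)$, the torsion-free reduction relative to $M$ followed by reflection into $\J(M)$; this is essentially the reduction functor of Jasso or Buan--Marsh. The key claim is that $F$ is right exact and preserves projective generators. It then follows that $F\cong -\otimes_\Gamma\bar\Gamma$ for a quotient $\bar\Gamma=\Gamma/J$. To show $J\subseteq\rad\Gamma$, check that $F$ sends indecomposable projectives to indecomposable projectives bijectively, which a rank count confirms, since both categories have rank $n-1$. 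Also, $\Gamma$ is $\tau$-tilting finite, because $\J(\widetilde M)$ is a wide subcategory of a $\tau$-tilting finite category.

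\textbf{Step 3: recursion.} Given a $\tau$-exceptional sequence $(M_1,\dots,M_r)$ over $\bar\Lambda$, lift $M_r$ to $\widetilde M_r$ and transport $(M_1,\dots,M_{r-1})$ into $\mods\bar\Gamma$. Apply the inductively defined $\Delta_J$ for $\Gamma\to\bar\Gamma$ to obtain a sequence in $\mods\Gamma\simeq\J(\widetilde M_r)$, and append $\widetilde M_r$. Injectivity follows by induction. Suppose two sequences have the same image. Then their last terms agree, because $F_{\widetilde M_r}(\widetilde M_r)$-type data, namely $\widetilde M\otimes\bar\Lambda\in\add M$, recovers $M_r$ as the unique indecomposable summand. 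The remaining terms then agree by injectivity of $\Delta_J$.

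\textbf{Main obstacle.} The hard step is Step 2: showing that the composite functor is naturally isomorphic to extension of scalars along a surjection with kernel in the radical. In particular, one must show that it is right exact, even though it is assembled from the non-exact torsion-free and torsion functors. I would first handle the case where $\widetilde M$ is projective, where both perpendicular categories are modules over idempotent quotients and everything is explicit. For the general case, I would use Jasso's bijection between $\J(\widetilde M)$ and the interval of torsion classes $[\Gen\widetilde M,{}^\perp\tau\widetilde M]$. The plan is to compare the Bongartz completions of $\widetilde M$ and $M$ and show that the projective generator of $\J(\widetilde M)$ maps into $\add$ of that of $\J(M)$.
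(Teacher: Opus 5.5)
Your architecture matches the paper's: lift the last term, pass from $\J(\widetilde M)$ to $\J(M)$ via $f_MF$ with $F=\bar\Lambda\otimes_\Lambda-$, identify this with extension of scalars along $\Gamma\to\Gamma/J$ with $J\subseteq\rad\Gamma$, recurse, and get injectivity from the last term. But neither substantive step is actually established.

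\textbf{Step 1.} Your refinement claim is backwards. A $\bar\Lambda$-module has the same submodules over $\Lambda$, so every $\bar\Lambda$-wall is a $\Lambda$-wall, and it is the fan of $\Lambda$ that refines the fan of $\bar\Lambda$. Your claim that each cone of $\bar\Lambda$ lies in a cone of $\Lambda$ fails in the paper's first example ($k(1\to2\to3)$ modulo $\beta\alpha$). There $\Lambda$ has nine rays and $\Lambda/I$ has eight. The extra ray, $g_\Lambda(P_1/\operatorname{soc}P_1)=[P_1]-[P_3]$, lies inside the cone of $\Lambda/I$ spanned by $[\bar P_1]$ and $-[\bar P_3]$.

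The correct direction actually helps you. Every ray of the fan of $\bar\Lambda$ is a ray of the fan of $\Lambda$, so your $\widetilde M$ is always defined. The fallback, which as stated is not a construction, is never needed. However, $F(\widetilde M)\in\add M$, which is exactly what your injectivity argument uses, is only announced as something to \emph{check}. One argument: apply $\bar\Lambda\otimes_\Lambda-$ to the minimal presentation $P_1\to P_0$ of $\widetilde M$. This gives a $2$-term presilting complex over $\bar\Lambda$, because null-homotopies lift along $P_0\twoheadrightarrow\bar P_0$. Its $g$-vector is $g_{\bar\Lambda}(M)$, so by \cite{air} it is the minimal presentation of $M$, hence $F(\widetilde M)\cong M$.

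\textbf{Step 2.} This is the more serious gap. It is the content of Sections~\ref{sec:ind} and~\ref{sec:rec}, and you state it only as a key claim. Three ingredients are missing.
\begin{itemize}
\item[(i)] You need $F({}^{\perp}\tau\widetilde M)\subseteq{}^{\perp}\tau_{\bar\Lambda}M$ even for $f_MF$ to land in $\J(M)$.
\item[(ii)] You need projectives to go to projectives. In the paper's notation this rests on $F(\Gen Z)=\Gen_{\Lambda/I}C$ and $F({}^{\perp}\tau Z)={}^{\perp_{\Lambda/I}}\tau_{\Lambda/I}C$ (Lemmas~\ref{lem:FGenZ} and~\ref{lem:Fperptau}). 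From these the paper gets three things: Bongartz completions go to Bongartz completions (Proposition~\ref{prop:FZaddC}(a)); $f_CF\cong F_Zf_Z$ on ${}^{\perp}\tau Z$ (Proposition~\ref{prop:FZ-defined}); and $f_Z$ is full and dense on Ext-projectives (Lemma~\ref{lem:fU-projectives}).
\item[(iii)] You need fullness on projectives (Proposition~\ref{prop:FZ-projectives}). Without it, preserving projective generators gives only an algebra map $\Gamma\to\End(FP)^{\op}$, not a quotient $\Gamma/J$.
\end{itemize}
Your rank count also assumes, without proof, that each indecomposable projective goes to an indecomposable or zero object.

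Your $g$-vector lift would allow a shortcut, but it is not in the proposal. Set $\theta=g_\Lambda(\widetilde M)=g_{\bar\Lambda}(M)$. Then $\J(\widetilde M)$ and $\J_{\bar\Lambda}(M)$ are the $\theta$-semistable subcategories (Yurikusa; Br\"ustle--Smith--Treffinger). Semistability of a $\bar\Lambda$-module is the same over either ring, so $\J_{\bar\Lambda}(M)=\J(\widetilde M)\cap\mods\bar\Lambda$. This subcategory is closed under subobjects, quotients and sums in $\J(\widetilde M)\simeq\mods\Gamma$, so it equals $\mods(\Gamma/J)$ with $J$ its annihilator. Equal ranks then force $J\subseteq\rad\Gamma$ and $\J(\widetilde M)=\Filt\J_{\bar\Lambda}(M)$. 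In particular your $\widetilde M$ is the paper's $Z$.
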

	
	The map $\Delta_I$ is defined recursively. 
	An important ingredient is the construction of extension-of-scalars functors for the induced 
	$\tau$-perpendicular categories.
	
	For an ideal $I$ in $\Lambda$, we  
	let $F\colon \mods \Lambda \to \mods \Lambda /I$ denote the 
	functor $F = \frac{\Lambda}{I} \otimes_{\Lambda} -$.
	Let $\W$ be a $\tau$-perpendicular category of $\mods \Lambda$,
	and let $X$ be an object in $\W$ which is $\tau_{\W}$-rigid, that is $\tau$-rigid with respect to the Auslander-Reiten translate in $\W$. We then let $\J_{\W}(X)$ denote the corresponding 
	$\tau$-perpendicular category. This is also a $\tau$-perpendicular 
	category in $\mods \Lambda$, by \cite[Thm. 4.3]{bm2} in the $\tau$-tilting finite case, and by   \cite[Cor. 6.7]{bh} in the general case.
	
	Let $(C_1, \dots, C_t)$ be a $\tau$-exceptional sequence in $\mods \Lambda/I$,
	and let $\Delta_I(C_1, \dots, C_t) =(Z_1, \dots, Z_t)$. 
	Consider the wide subcategories $\W_i$ of $\mods \Lambda/I$ defined recursively by letting
	$\W_t = \mods \Lambda/I$ and for $1 \le s < t$ letting $\W_s = \J_{\W_{s+1}}(C_{s+1})$.
	Similarly, consider the wide subcategories $\V_i$ of $\mods \Lambda $ defined recursively by letting
	$\V_t = \mods \Lambda$ and for $1 \le s < t$ letting $\V_s = \J_{\V_{s+1}}(Z_{s+1})$.
	
	\begin{theorem}\label{thm:main2}
		With the notation above, we have $F(Z_t) \in \add C_t$. Moreover,
		there are functors $F_i \colon \V_i \to \W_i$, naturally isomorphic to 
		extension-of-scalars functors in the sense that for each $i<t$ there is a 
		diagram 
		
		$$
		\begin{tikzcd}
			\V_i \arrow[r, "\sim"] \arrow[d, "F_i"'] &  \mods \Gamma_i \arrow[d, "(\Gamma_i/I_i) \otimes_{\Gamma_i} -"] \\
			\W_i \arrow[r, "\sim"]                  & \mods (\Gamma_i/I_i)
		\end{tikzcd}
		$$
		which commutes up to natural isomorphism, 
		for some finite-dimensional algebra $\Gamma_i$ with an ideal $I_i$
		contained in the radical of $\Gamma_i$. 
		
		These functors satisfy $F_i(Z_i) \in \add C_i$.
	\end{theorem}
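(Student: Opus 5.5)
Both Theorem~\ref{thm:main1} and Theorem~\ref{thm:main2} will be proved together by induction on the length $t$, using the recursive definition of $\Delta_I$. The whole argument rests on one basic step, which we then iterate inside $\tau$-perpendicular categories.

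\emph{Basic step (lifting one term).} Take $\Lambda$ $\tau$-tilting finite, $I\subseteq\rad\Lambda$, and $C$ an indecomposable $\tau$-rigid $\Lambda/I$-module. We must produce an indecomposable $\tau$-rigid $\Lambda$-module $Z$ with $F(Z)\in\add C$. The plan is to use the bijection between $\tau$-rigid modules and functorially finite torsion classes of \cite{air}. Since $I\subseteq\rad\Lambda$, the map $\mathcal T\mapsto \mathcal T\cap \mods\Lambda/I$ is a surjection $\tors\Lambda\to\tors\Lambda/I$ (Eisele--Janssens--Raedschelders). In the $\tau$-tilting finite setting we pick the minimal torsion class $\mathcal T$ of $\Lambda$ mapping to $\Gen C$ (or $\mathsf{Filt}\Gen C$ in $\mods\Lambda$), and take $Z$ to be the corresponding indecomposable split projective of $\mathcal T$. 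Then $F(Z)=Z/IZ$ is a quotient of $Z$, so it lies in $\Gen C$. Comparing $\ttop$ shows $F(Z)$ is $\tau$-rigid with indecomposable top matching $C$, which gives $F(Z)\in\add C$. Injectivity at this level follows because $C$ can be recovered from $Z$ as the indecomposable summand of $F(Z)$.

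\emph{Comparing perpendicular categories.} Next we show that $F$ restricts to a functor $F_{t-1}\colon \J(Z)\to \J(C)$. We identify $\J(Z)\simeq\mods\Gamma$ via Jasso reduction, $\Gamma=\End(P_Z)/\langle e_Z\rangle$ for the Bongartz completion $P_Z$ of $Z$. Likewise $\J(C)\simeq\mods\Gamma'$ using the Bongartz completion of $C$ over $\Lambda/I$. The key claim is that $F$ sends the Bongartz completion $T$ of $Z$ into the additive closure of that of $C$, and that the induced map $\Gamma\to\Gamma'$ is surjective with kernel $J\subseteq\rad\Gamma$. Granting this, we show that under the equivalences, $F_{t-1}$ is right exact and sends the projective generator of $\J(Z)$ to that of $\J(C)$. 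By Eilenberg--Watts it is then isomorphic to $\Gamma'\otimes_\Gamma-$, which gives the commutative square in the statement. Kernel in the radical follows because $F$ is full on the relevant projectives modulo $I$: $\Hom_{\Lambda/I}(FT,FT)=\Hom_\Lambda(T,T/IT)$, and $T$ projective in the torsion class makes $\Hom(T,T)\to\Hom(T,T/IT)$ surjective. Its kernel consists of maps with image in $IT\subseteq\rad T$, hence lies in $\rad$.

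\emph{Induction.} The categories $\J(Z)\simeq\mods\Gamma$ are again $\tau$-tilting finite, and $J\subseteq\rad\Gamma$. So we apply the basic step to $\Gamma\to\Gamma/J$ and $C_{t-1}$, and recurse; this defines $\Delta_I$ and the functors $F_i$ together with $F_i(Z_i)\in\add C_i$. Injectivity of $\Delta_I$ follows inductively: $Z_t$ determines $C_t$, so $\V_{t-1},\W_{t-1},F_{t-1}$ are determined, and then the induction hypothesis applies. The main obstacle is the key claim in the comparison step: that $F$ carries $\J(Z)$ into $\J(C)$, in particular the condition $\Hom(FX,\tau C)=0$, and matches the Bongartz completions. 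I would first try to establish this through the torsion-class description $\J(Z)={}^{\perp}(\tau Z)\cap Z^\perp$ as an interval $[\Gen Z, {}^\perp\tau Z]$, checking that the reduction map $\tors\Lambda\to\tors\Lambda/I$ sends this interval into the corresponding one for $C$.
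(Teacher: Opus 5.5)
Your architecture matches the paper's: lift the last term, match Bongartz completions, identify the induced functor with $\Gamma'\otimes_\Gamma-$, and recurse. However, two steps fail as stated.

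First, the lift is the wrong one. You take $Z$ to be the split projective of the \emph{smallest} torsion class restricting to $\Gen_{\Lambda/I}C$, namely $\FiltGen_\Lambda(C)$. The paper needs the \emph{largest} one, $\varphi^+(\Gen_{\Lambda/I}C)={}^{\perp}\J_{\Lambda/I}(C)$, i.e.\ $Z=\Pmods({}^{\perp}\Filt\J_{\Lambda/I}(C))$. Only this choice gives $\J(Z)=\Filt\J_{\Lambda/I}(C)$, so the two perpendicular categories share their simples. From that one gets $F({}^\perp\tau Z)={}^{\perp_{\Lambda/I}}\tau_{\Lambda/I}C$. This in turn forces $\add F(\Pmod({}^\perp\tau Z))=\add\Pmod({}^{\perp_{\Lambda/I}}\tau_{\Lambda/I}C)$ and $F(Z)\in\add\Pmod(\Gen_{\Lambda/I}C)\cap\add\Pmod({}^{\perp_{\Lambda/I}}\tau_{\Lambda/I}C)=\add C$. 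Your comparison of tops does not prove $F(Z)\in\add C$, since $C$ need not have simple top.

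With the minimal class the construction breaks. Take Example~1 ($\Lambda=kA_3$, $I=\langle\beta\alpha\rangle$), write $M_{12}=P_1/S_3$ and $M_{23}=P_2$, and let $C=M_{12}$, which is projective over $\Lambda/I$.
\begin{itemize}
\item $\FiltGen_\Lambda(C)=\add(M_{12}\amalg S_1)$, so your lift is $M_{12}$.
\item $\J(M_{12})=\add(S_2\amalg P_1)$ is semisimple, whereas $\J_{\Lambda/I}(C)=\add(M_{23}\amalg S_2\amalg S_3)$ is of type $A_2$. A quotient of a semisimple algebra is semisimple, so no square as in the theorem can exist.
\item $F$ sends the Bongartz completion $P_1\amalg M_{12}\amalg S_2$ of $M_{12}$ to $M_{12}^2\amalg S_2\notin\add(\Lambda/I)$, which refutes your key claim for this $Z$.
\end{itemize}
The correct lift here is $P_1$.

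Second, even with the correct $Z$, $F$ does not map $\J(Z)$ into $\J_{\Lambda/I}(C)$. The condition you single out, $\Hom(FX,\tau_{\Lambda/I}C)=0$, is the one that does hold, since $\J(Z)\subseteq{}^\perp\tau Z$. What fails is $\Hom(C,FX)=0$. In Example~1 with $C=S_2$, one gets $Z=S_2$ (here both choices of lift agree) and $\J(Z)=\add(M_{23}\amalg S_1\amalg P_1)$. But $F(P_1)=M_{12}$ has socle $S_2$, so it is not in $C^{\perp}$.

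The missing idea is to compose with the torsion-free reflector of $(\Gen_{\Lambda/I}C,C^{\perp_{\Lambda/I}})$, i.e.\ to use $F_Z=f_CF|_{\J(Z)}$.
\begin{itemize}
\item $F_Z$ is right exact because $f_C$ is left adjoint to an inclusion.
\item $F(\Gen Z)=\Gen_{\Lambda/I}C$ gives $f_CF\cong F_Zf_Z$ on ${}^\perp\tau Z$.
\item Together with the equality of $\add$'s above and fullness of $f_Z$ on Ext-projectives, this makes $F_Z$ full and dense on projectives. Only then does your Eilenberg--Watts step apply.
\end{itemize}

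Your radical argument also fails as written. Lifting along $T\to T/IT$ needs $\Ext^1(T,IT)=0$, but $IT$ need not lie in ${}^\perp\tau Z$. With $C=S_2$ above, $T=P_1\amalg M_{23}\amalg S_2$, and the component $S_2\to M_{12}$ of a map $T\to T/IT$ does not lift, since $\Hom(S_2,P_1)=0$. Moreover, the algebras to compare are the endomorphism algebras of the projective generators of $\J(Z)$ and $\J_{\Lambda/I}(C)$, not of $T$ and $F(T)$. The kernel lies in the radical for a simpler reason: $\Gamma\to\Gamma/J$ is surjective and $\rk\J(Z)=\rk\J_{\Lambda/I}(C)$, so $J$ annihilates every simple $\Gamma$-module.
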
	
	
	The paper is organized as follows. We give some background and summarize some known results in Section \ref{sec:background}.
	In Section \ref{sec:supp} we make a detour to establish a result needed concerning support $\tau$-tilting modules. In Section \ref{sec:ind} we deal with the lifting of an indecomposable $\tau$-rigid module, which is recursively applied in Section \ref{sec:rec}, where we give a proof of our main results.
	In the final two sections, we state some conjectures and give examples. 
	
	
	\section{Background}\label{sec:background}
	
	\subsection{General notions and torsion}
	
	Throughout, $k$ is a field and $\Lambda$ is a finite-dimensional
	$k$-algebra.  We write $\mods\Lambda$ for the category of
	finite-dimensional left $\Lambda$-modules. We consider modules up to isomorphism. We write
	$\rk\Lambda$ for the number of simple $\Lambda$-modules.  All subcategories are assumed to be full, additive and closed under isomorphisms. For a class
	$\X$ of modules, we denote by $\add\X$ the subcategory
	of direct summands of finite direct sums of objects in $\X$, by
	$\Gen\X$ the subcategory of factor modules of objects in
	$\add\X$, and by $\Cogen\X$ the subcategory of
	submodules of objects in $\add\X$.  We let
	$\Filt\X$ be the subcategory of modules admitting a finite
	filtration whose factors belong to $\X$.  When
	$\X=\{X\}$, for an object $X$, then we omit the braces.
	
	For a subcategory $\X\subseteq\mods\Lambda$, set
	$$
	{}^{\perp_\Lambda}\X
	=\{M\in\mods\Lambda\mid
	\operatorname{Hom}_\Lambda(M,X)=0
	\text{ for all }X\in\X\},
	$$
	and define $\X^{\perp_\Lambda}$ dually.  The ambient algebra
	will be omitted when it is clear. 
	Recall that a class $\T$ of objects in $\mods\Lambda$ is a \emph{torsion class} (respectively, \emph{torsion-free class}) if it is closed under quotients, direct sums and extensions (respectively, under submodules, direct sums and extensions).
	We denote by
	$\FiltGen_\Lambda(\X)$ the smallest torsion class containing
	$\X$, and by $\FiltCogen_\Lambda(\X)$ the smallest
	torsion-free class containing $\X$. 
	A \emph{wide subcategory} is a
	full subcategory closed under kernels, cokernels and extensions.  If
	$\W$ is wide, then $\Simp\ W$ denotes a set of
	representatives of its simple objects and
	$\rk\W=|\Simp\W|$.  We write
	$\ttop_\Lambda M=M/\rad_\Lambda M$, and $\rad\Lambda$ for the
	Jacobson radical of $\Lambda$.
	
	A torsion pair in $\mods\Lambda$ is a pair
	$(\T,\F)$ such that
	$\operatorname{Hom}_\Lambda(\T,\F)=0$ and every
	$X\in\mods\Lambda$ occurs in a canonical short exact sequence
	$$
	0\longrightarrow t_{\T}(X)\longrightarrow X
	\longrightarrow f_{\T}(X)\longrightarrow 0
	$$
	with $t_{\T}(X)\in\T$ and
	$f_{\T}(X)\in\F$.
	If $\T = \Gen U$ for some module $U$, we write $t_U$ and $f_U$ for
	$t_{\T}$ and $f_{\T}$.
	Note that every torsion class $\T$ is part of a torsion pair $(\T,\T^{\perp_{\Lambda}})$, while every torsion-free class $\F$ is part of a torsion pair $({}^{\perp_{\Lambda}}\F,\F)$. 

	A subcategory $\C$ is
	\emph{contravariantly finite} if every module has a right
	$\C$-approximation, and \emph{covariantly finite} if every module has
	a left $\C$-approximation; it is \emph{functorially finite} if it has
	both properties.
	
	Let $\T$ be a torsion class.  An object $P\in\T$ is
	\emph{Ext-projective in $\T$} if
	$\Ext^1_\Lambda(P,\T)=0$.  We denote the full
	subcategory of such objects by $\P(\T)$, and we let 
	$\Pmod(\T)$ be the basic direct sum of the indecomposable 
	Ext-projective objects in $\T$.
	An object
	$P\in\T$ is \emph{split projective in $\T$} if every
	epimorphism $X\twoheadrightarrow P$ with $X\in\T$ splits.
	When $\T$ is functorially finite, let
	$\Pmods(\T)$ be the basic direct sum of its indecomposable split
	projective objects, and let $\Pmodns(\T)$ be the basic direct sum
	of the indecomposable Ext-projective objects which are not split
	projective. Thus the basic Ext-projective generator is
	$$
	\Pmod(\T)=\Pmods(\T)\amalg\Pmodns(\T),
	\qquad
	\P(\T)=\add P(\T).
	$$
	
	\subsection{$\tau$-tilting}
	
	Let $\tau=\tau_\Lambda$ denote the Auslander--Reiten translate in 
	$\mods \Lambda$. We recall some key notions from \cite{air}.
	A module $M$ is \emph{$\tau$-rigid} if
	$\operatorname{Hom}_\Lambda(M,\tau M)=0$.  Writing $|M|$ for the
	number of nonisomorphic indecomposable direct summands of a basic
	module $M$, we call $M$ a \emph{support $\tau$-tilting module} if it is
	$\tau$-rigid and there is a projective module $P$ such that
	$$
	\operatorname{Hom}_\Lambda(P,M)=0,
	\qquad |M|+|P|=\rk\Lambda.
	$$
	The Adachi--Iyama--Reiten correspondence \cite[Thm. 2.7]{air} identifies basic support
	$\tau$-tilting modules with functorially finite torsion classes via
	$M\mapsto\Gen M$, with inverse
	$\T\mapsto \Pmod(\T)$. 
	
	A basic module $M=\amalg_{i=1}^r M_i$, with the $M_i$
	indecomposable, is \emph{gen-minimal} if
	$$
	M_i\notin
	\Gen\Bigl(\amalg_{j\ne i}M_j\Bigr)
	\qquad\text{for every }i.
	$$
	The assignments
	$$
	M\longmapsto\Gen M,
	\qquad
	\T\longmapsto\Pmods(\T)
	$$
	give mutually inverse bijections between isomorphism classes of basic
	gen-minimal $\tau$-rigid modules and functorially finite torsion
	classes. This variation of the Adachi--Iyama--Reiten correspondence is explicit in \cite[Thm. 1.1]{bhm2}.

	The algebra $\Lambda$ is
	\emph{$\tau$-tilting finite} if it has only finitely many isomorphism classes
	of indecomposable $\tau$-rigid modules; equivalently, every torsion
	class in $\mods\Lambda$ is functorially finite, by~\cite[Thm.\ 1.2]{dij}.
	
	\subsection{Factor algebras}
	
	For an ideal $I \subseteq  \Lambda$, recall that
	$F\colon \mods \Lambda \to \mods \Lambda /I$ denotes the 
	functor $F = \frac{\Lambda}{I} \otimes_{\Lambda} -$.

	\begin{lemma}\label{lem:Gen-preserved}
		\begin{itemize}
			\item[(a)] Let $M$ be $\tau$-rigid in $\mods \Lambda$.
			Then we have $$F(\Gen M) = \Gen F(M) = \Gen M \cap \mods \Lambda/I.$$
			\item[(b)] If $\Lambda$ is $\tau$-tilting finite, then $F(\T)= \T \cap \mods \Lambda/I$ for any torsion class $\T$ in $\mods \Lambda$.
		\end{itemize}
	\end{lemma}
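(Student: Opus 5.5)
The plan is to prove both parts by direct inclusions, using only that $F(X)=\Lambda/I\otimes_\Lambda X\cong X/IX$ is right exact and that $F(X)$ is a factor module of $X$. We view $\mods\Lambda/I$ as the full subcategory of $\mods\Lambda$ of modules annihilated by $I$. Since a factor of a $\Lambda/I$-module is again a $\Lambda/I$-module, $\Gen F(M)$ means the same thing whether it is formed in $\mods\Lambda/I$ or in $\mods\Lambda$.

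For (a) we show the cycle of inclusions
$$F(\Gen M)\subseteq \Gen F(M)\subseteq \Gen M\cap\mods\Lambda/I\subseteq F(\Gen M).$$
For the first inclusion, let $X\in\Gen M$ with an epimorphism $M^n\twoheadrightarrow X$. Right exactness of $F$ gives an epimorphism $F(M)^n\twoheadrightarrow F(X)$, so $F(X)\in\Gen F(M)$. For the second, $F(M)=M/IM$ is a factor of $M$ and is annihilated by $I$. Hence every factor of $F(M)^n$ lies in $\Gen M$ and is a $\Lambda/I$-module. For the third, let $X\in\Gen M$ with $IX=0$. Then $F(X)=X/IX=X$, so $X\in F(\Gen M)$.

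For (b) the argument has the same shape. For $X\in\T$, the module $F(X)=X/IX$ is a factor of $X$, so it lies in $\T$ because torsion classes are closed under factors; it is also annihilated by $I$. Conversely, if $X\in\T\cap\mods\Lambda/I$, then $X=F(X)\in F(\T)$.

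I expect no real obstacle. The one point to check carefully is the identification $F(X)\cong X/IX$ as $\Lambda$-modules, together with the observation that $F$ restricts to (a functor isomorphic to) the identity on $\mods\Lambda/I$. The hypotheses that $M$ is $\tau$-rigid and that $\Lambda$ is $\tau$-tilting finite do not appear to be needed for these set-theoretic equalities. They seem to be there only so that $\Gen M$, respectively $\T$, is a functorially finite torsion class, which is the form in which the lemma is used later. If a subtlety is hidden anywhere, it would be in interpreting $F(\Gen M)$ as a subcategory closed under isomorphisms and direct summands, and that is harmless because all three classes in (a) are closed under both.
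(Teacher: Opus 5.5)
Your proof is correct, and it shows that neither the $\tau$-rigidity of $M$ nor the $\tau$-tilting finiteness of $\Lambda$ is needed. For the first equality in (a) your argument matches the paper's. Right exactness gives $F(\Gen M)\subseteq\Gen F(M)$, and any $X\in\Gen F(M)$ satisfies $X\cong F(X)$ with $X\in\Gen M$.

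The differences lie elsewhere. For the second equality in (a), the paper does not argue directly; it cites Prop.~5.6(b) of Demonet--Iyama--Reading--Reiten--Thomas. You instead close the cycle of inclusions by hand. For (b), the paper reduces to (a): since $\Lambda$ is $\tau$-tilting finite, every torsion class has the form $\Gen T$ with $T$ support $\tau$-tilting, hence $\tau$-rigid (by Demonet--Iyama--Jasso), so (a) applies. You use only that $F(X)\cong X/IX$ is a factor module of $X$ and that $\T$ is closed under factor modules. This proves $F(\T)=\T\cap\mods\Lambda/I$ for every torsion class, and indeed every isomorphism-closed class closed under factors, over an arbitrary finite-dimensional algebra.

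Your route is self-contained and strictly more general. The paper's route is brief given its citations and presents (b) as a corollary of (a). However, it invokes a classification result to obtain a statement that, as your argument shows, needs none of the hypotheses.
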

	
	\begin{proof}
		The inclusion $F(\Gen M) \subseteq \Gen F(M)$ follows from $F$
		being right exact. For the opposite inclusion,
		let $X \in \Gen F(M) \subseteq \Gen M \cap \mods \Lambda/I$.
		So $X = F(X)$ with $X \in \Gen M$, that is $F(\Gen M) \supseteq \Gen F(M)$.
		The second equality in (a) now follows from \cite[Prop. 5.6(b)]{dirrt}.
		
		Part (b) now follows from the fact that all torsion classes are finitely generated, since $\Lambda$ is $\tau$-tilting finite \cite[Thm. 3.8]{dij}.
	\end{proof}
	
	We will use the following notion and result from \cite{dirrt}.
	
	\begin{definition}
		For a subcategory $\SC$ of $\mods \Lambda/I$, let 
		$\phi^+(\SC) = \{X \in \mods \Lambda \mid F(X) \in \SC \}$.
		\end{definition}
		
		\begin{proposition}\cite[Prop. 5.7]{dirrt}\label{prop:dirrt-facts}
			\begin{itemize}
				\item[(a)] If $(\T, \F)$ is a torsion pair in $\mods \Lambda$, then
				$(\T \cap \mods \Lambda/I, \F \cap \mods \Lambda/I)$ is a  torsion pair in $\mods \Lambda/I$.
				\item[(b)] If $\T'$ is a torsion class in $\mods \Lambda/I$, then $\phi^+(\T') \cap \mods \Lambda/I = \T'$.
			\end{itemize}
		\end{proposition}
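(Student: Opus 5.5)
The plan is to prove both parts directly from the definitions. Throughout I use two standard facts. First, $\mods \Lambda/I$ is identified with the full subcategory of $\mods\Lambda$ consisting of modules annihilated by $I$, and this subcategory is closed under submodules and factor modules. Second, for any $\Lambda$-module $X$ we have $F(X) = \Lambda/I\otimes_\Lambda X \cong X/IX$.

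For (a), let $(\T,\F)$ be a torsion pair in $\mods\Lambda$, and set $\T' = \T\cap\mods\Lambda/I$ and $\F'=\F\cap\mods\Lambda/I$.
\begin{enumerate}
\item Since $\mods\Lambda/I$ is a full subcategory of $\mods\Lambda$, the vanishing $\operatorname{Hom}_{\Lambda/I}(\T',\F')=0$ is inherited from $\operatorname{Hom}_\Lambda(\T,\F)=0$.
\item Take $X\in\mods\Lambda/I$ and its canonical sequence $0\to t_\T(X)\to X\to f_\T(X)\to 0$ in $\mods\Lambda$.
\item The module $t_\T(X)$ is a submodule of $X$, so it is annihilated by $I$ and lies in $\T'$.
\item The module $f_\T(X)$ is a factor module of $X$, so it is annihilated by $I$ and lies in $\F'$.
\item Hence this is a short exact sequence in $\mods\Lambda/I$ with outer terms in $\T'$ and $\F'$, which is exactly the required canonical sequence.
\end{enumerate}
Given the definition of torsion pair recalled above (Hom-orthogonality plus existence of canonical sequences), this completes (a). If one prefers, it also follows that $\T'$ and $\F'$ are mutually perpendicular within $\mods\Lambda/I$.

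For (b), let $\T'$ be a torsion class in $\mods\Lambda/I$ and take $X\in\mods\Lambda/I$. Since $IX=0$, we get $F(X)\cong X/IX = X$. Because subcategories are closed under isomorphisms, $X\in\phi^+(\T')$ holds if and only if $F(X)\in\T'$, which holds if and only if $X\in\T'$. This gives $\phi^+(\T')\cap\mods\Lambda/I=\T'$.

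I expect no real obstacle here. The only point needing care is the identification of $\mods\Lambda/I$ with $I$-annihilated $\Lambda$-modules, together with the natural isomorphism $F|_{\mods\Lambda/I}\cong \mathrm{id}$. I would state both explicitly at the start.
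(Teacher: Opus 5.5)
Your proof is correct and complete with respect to the definition of torsion pair recalled in Section~2 (Hom-vanishing plus existence of canonical sequences). The paper gives no argument of its own for this statement; it simply quotes \cite[Prop.~5.7]{dirrt}. You instead verify both parts directly from two elementary facts: $\mods\Lambda/I$ is closed under submodules and factor modules in $\mods\Lambda$, and $F|_{\mods\Lambda/I}$ is isomorphic to the identity. This shows that both parts are formal. In particular, your argument for (b) never uses that $\T'$ is a torsion class, so (b) holds for any isomorphism-closed subcategory of $\mods\Lambda/I$. What the citation gives the paper is a single reference that also contains the lattice-theoretic facts used later in Section~6. These are that $\T\mapsto\T\cap\mods\Lambda/I$ is a surjective lattice morphism $\tors\Lambda\to\tors(\Lambda/I)$, and that $\phi^+(\T')$ is the largest torsion class of $\mods\Lambda$ restricting to $\T'$. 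Your argument does not address them, but they are not part of this statement. One line is worth adding, because Lemma~\ref{lem:FGenZ} uses (a) in the form $Z^\perp\cap\mods\Lambda/I=(\Gen Z\cap\mods\Lambda/I)^{\perp_{\Lambda/I}}$. You remark without proof that the restricted pair is maximal, i.e.\ $\F\cap\mods\Lambda/I=(\T\cap\mods\Lambda/I)^{\perp_{\Lambda/I}}$ and dually. This follows from your canonical sequences: if $Y\in\mods\Lambda/I$ receives no nonzero maps from $\T\cap\mods\Lambda/I$, then the inclusion $t_\T(Y)\to Y$ is zero. Hence $t_\T(Y)=0$ and $Y\cong f_\T(Y)$, and the other equality follows in the same way.
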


		\section{Support \texorpdfstring{$\tau$}{tau}-tilting modules}\label{sec:supp}
		
		In this section, we give a characterization of support $\tau$-tilting modules, which is particularly useful when comparing 
		the $\tau$-tilting theories and torsion theories of $\mods \Lambda$ and $\mods \Lambda/I$.
		It is a variation of results in \cite[Sect. 1]{air} and 
		\cite[Sect. 3]{ms}. 
		
		We first make the following observation.
		\begin{lemma}
			\label{lem:addTenough}
			Suppose that $T$ is a $\Lambda$-module.
			Let $g:\Lambda\rightarrow T_0$ be a left $\add T$-approximation. Then $g$ is a left $\Gen T$-approximation.
		\end{lemma}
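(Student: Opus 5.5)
We have to show that every morphism $h\colon \Lambda\to X$ with $X\in\Gen T$ factors through $g$. First, $g$ itself lands in $\Gen T$, since $T_0\in\add T\subseteq \Gen T$. It therefore remains to handle an arbitrary $X\in\Gen T$ and an arbitrary map $h\colon\Lambda\to X$.

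The plan is to use the projectivity of $\Lambda$ to lift $h$ into $\add T$, and then apply the approximation property of $g$ there. Since $X\in\Gen T$, there is an epimorphism $\pi\colon T'\twoheadrightarrow X$ with $T'\in\add T$, namely a finite direct sum of copies of $T$. Because $\Lambda$ is projective, $h$ lifts through $\pi$: there is $h'\colon \Lambda\to T'$ with $\pi h' = h$.

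Since $T'\in\add T$ and $g$ is a left $\add T$-approximation, $h'$ factors as $h' = u g$ for some $u\colon T_0\to T'$. Hence
$$h = \pi h' = (\pi u) g,$$
so $h$ factors through $g$, as required.

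There is no real obstacle here. The only points to check are that $\Gen T$ is defined via epimorphisms from objects of $\add T$, which it is by the conventions in Section \ref{sec:background}, and that left approximations are tested only on objects of the target subcategory.
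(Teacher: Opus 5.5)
Your proof is correct and is essentially the same as the paper's. Lift $h\colon\Lambda\to X$ through an epimorphism $T'\twoheadrightarrow X$ with $T'\in\add T$ using projectivity of $\Lambda$, then factor the lift through $g$ by the $\add T$-approximation property. One harmless imprecision: an object of $\add T$ is a direct summand of a finite direct sum of copies of $T$ rather than literally such a sum, but the argument works for any $T'\in\add T$.
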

		\begin{proof}
			Let $X\in \Gen T$. Then there is an epimorphism $\pi:\widetilde{T}\rightarrow X$ where $\widetilde{T}\in \add T$.
			Let $h:\Lambda\rightarrow X$. Then, since $\Lambda$ is projective, there is a map $u:\Lambda\rightarrow \widetilde{T}$ such that $h=\pi u$. Since $g$ is a left $\add T$-approximation, there is a map $v:T_0\rightarrow \widetilde{T}$ such that $u=vg$, so $h=\pi u=\pi v g$ factors through $g$ as required.
		\end{proof}
		
		For a $\Lambda$-module $M$, we denote the annihilator of $M$ by
		$$\ann M=\{a\in \Lambda\,:\,\text{$am=0$ for all $m\in M$}\}.$$
		We also recall that a $\Lambda$-module $T$ is said to be \emph{tilting}
		if $\Ext^1(T,T)=0$, the projective dimension of $T$ is at most $1$, and
		there is an exact sequence:
		$$
		\begin{tikzcd}
			0 \arrow[r] &\Lambda \arrow[r] & T_0 \arrow[r] & T_1 \arrow[r] & 0
		\end{tikzcd}
		$$
		of $\Lambda$-modules where $T_0$ and $T_1$ lie in $\add T$.
		
		\begin{lemma}
			\label{lem:sequencegivessupport}
			Suppose that there is a right exact sequence:
			$$
			\begin{tikzcd}
				\Lambda \arrow[r,"g"] & T_0 \arrow[r] & T_1 \arrow[r] & 0
			\end{tikzcd}
			$$
			where $T_0, T_1 \in \add T$ for a $\tau$-rigid module $T$, and $g$ is a left $\add T$-approximation of $\Lambda$. Let $\T=\Gen T$. Then:
			\begin{itemize}
				\item[(a)] There is an induced short exact sequence
				$$\begin{tikzcd}
					0 \arrow[r] & \Lambda/\ann \T \arrow[r,"\widetilde{g}"] & T_0 \arrow[r] & T_1 \arrow[r] & 0
				\end{tikzcd}
				$$
				\item[(b)] The $\Lambda$-module $T$ is a tilting module over $\Lambda/\ann \T$.
				\item[(c)] We have $\P(\T)=\add T$.
				\item[(d)] The module $T$ is a support $\tau$-tilting $\Lambda$-module.
			\end{itemize}
		\end{lemma}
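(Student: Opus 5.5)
The plan is to prove the four parts in order, with (a) a direct computation, (b) a reduction to a faithful $\tau$-rigid module over $\Lambda' := \Lambda/\ann \T$, (c) classical tilting theory over $\Lambda'$, and (d) the Adachi--Iyama--Reiten correspondence.

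For (a), the task is to identify $\ker g$. A map $f\colon \Lambda \to T$ is determined by $t = f(1)$, and $f(a) = at$. Since $g$ is a left $\add T$-approximation, every such $f$ (and every map to $\add T$) factors through $g$. Hence
$$\ker g = \{a \in \Lambda \mid at = 0 \text{ for all } t \in T\} = \ann T.$$
Every module in $\Gen T$ is a quotient of some $T^m$, so $\ann T \subseteq \ann X$ for all $X \in \T$. Therefore $\ann T = \ann \T$. Factoring $g$ through $\Lambda/\ann \T$ gives the injective map $\widetilde g$, and right exactness of the given sequence gives the short exact sequence.

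For (b), I work over $\Lambda'$. By construction $T$ is a faithful $\Lambda'$-module, and we also have $\Gen_{\Lambda'} T = \Gen_\Lambda T = \T$. First I check that $T$ is $\tau_{\Lambda'}$-rigid, using the criterion $\Hom(M,\tau N)=0 \iff \Ext^1(N,\Gen M)=0$ from \cite{air}. The extension-closed class $\T$ lies inside $\mods \Lambda'$, so any $\Lambda$-extension of two objects of $\T$ is again a $\Lambda'$-module. Hence $\Ext^1_{\Lambda'}(X,Y) = \Ext^1_\Lambda(X,Y)$ for all $X,Y \in \T$. In particular $\Ext^1_{\Lambda'}(T,\Gen T) = 0$, so $T$ is $\tau_{\Lambda'}$-rigid. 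A faithful $\tau$-rigid module has projective dimension at most $1$ \cite[Prop. 2.2]{air}, and $\Ext^1_{\Lambda'}(T,T)=0$. Combined with the sequence from (a), this shows $T$ is a tilting $\Lambda'$-module.

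For (c), I use the same Ext-comparison: a module is Ext-projective in $\T$ over $\Lambda$ if and only if it is Ext-projective in $\T$ over $\Lambda'$. For the classical tilting module $T$ over $\Lambda'$, we have $\Gen T = T^{\perp_1}$, and the Ext-projectives of this torsion class are exactly $\add T$. This is standard and can also be seen directly. Given Ext-projective $X$, take a right $\add T$-approximation $T' \to X$. It is surjective since $X \in \Gen T$, and its kernel lies in $T^{\perp_1} = \T$ by the long exact sequence. So the approximation splits and $X \in \add T$.

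For (d), Lemma~\ref{lem:addTenough} applied to $g$ shows that $\Lambda$ has a left $\Gen T$-approximation, so $\T$ is covariantly finite. Since $\T = \Gen T$, it is also contravariantly finite, hence functorially finite. By \cite[Thm. 2.7]{air}, $\Pmod(\T)$ is support $\tau$-tilting, and by (c), $\Pmod(\T)$ is the basic version of $T$. This gives (d).

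I expect the main point needing care to be the identification of $\Ext^1$ over $\Lambda$ and $\Lambda'$ on $\T$. Both the $\tau_{\Lambda'}$-rigidity in (b) and the comparison of Ext-projectives in (c) rest on it, and it relies on $\T$ being extension-closed inside $\mods \Lambda'$.
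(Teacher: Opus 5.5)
Your proof is correct, but for (b) and (c) it takes a different route from the paper.

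\textbf{Part (a).} This is essentially the paper's argument. The paper first upgrades $g$ to a left $\Gen T$-approximation via Lemma~\ref{lem:addTenough} and gets $\ker g=\ann\T$ directly. You get $\ker g=\ann T$ and then observe $\ann T=\ann\T$.

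\textbf{Parts (b) and (c).} The paper never checks $\tau$-rigidity or faithfulness over $\Lambda'=\Lambda/\ann\T$. For (b), it quotes \cite[Prop.~1.2(b)]{air} to get $T\in\P(\T)$, and \cite[Prop.~1.1(e)]{air} to know that $\Pmod(\T)$ is a tilting $\Lambda'$-module. So $T\in\add\Pmod(\T)$ inherits projective dimension at most one and $\Ext^1(T,T)=0$, and (a) finishes the argument. For (c), it compares the two tilting $\Lambda'$-modules $T$ and $\Pmod(\T)$ with $T\in\add\Pmod(\T)$ and concludes $\add T=\add\Pmod(\T)$, implicitly by counting summands of tilting modules.

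You replace both citations with three ingredients:
\begin{itemize}
\item the identification $\Ext^1_{\Lambda'}=\Ext^1_{\Lambda}$ on the extension-closed class $\T\subseteq\mods\Lambda'$;
\item the fact that a faithful $\tau$-rigid module has projective dimension at most one;
\item the classical identity $\Gen T=T^{\perp_1}$ for tilting modules, followed by a splitting argument with a right $\add T$-approximation.
\end{itemize}
Your version is more self-contained. It also makes explicit the comparison between Ext-projectives over $\Lambda$ and over $\Lambda'$, which the paper's appeal to \cite[Prop.~1.1(e)]{air} leaves silent. The paper's version is shorter because it leans on citations.

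\textbf{Part (d).} You justify functorial finiteness of $\T$, which the paper simply asserts (in its proof of (b)). One step is terse. A left $\T$-approximation of $\Lambda$ alone does not immediately give covariant finiteness. You need one more line: for a quotient-closed class, left approximations of projectives induce left approximations of every module by pushing through a projective presentation. Alternatively, note that $\Gen T$ is always functorially finite.
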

		\begin{proof}
			Note that, by Lemma~\ref{lem:addTenough}, the map $g$ is a left $\Gen T$-approximation.
			
			We first show that $\ker g=\ann \T$. Let $a\in \ann \T$, and set $g(1)=t\in T_0$. Then $g(a)=g(a\cdot 1)=ag(1)=at=0$, so $a\in \ker g$ and we have $\ann \T\subseteq \ker g$.
			
			Conversely, let $a\in \ker g$. Let $U\in \T$ and let $v\in U$.
			Define a homomorphism $\alpha:\Lambda\rightarrow U$ by setting
			$\alpha(x)=xv$ for $x\in \Lambda$.
			Since $g$ is a left $\T$-approximation of $\Lambda$, there is a map $u:T_0\rightarrow U$ satisfying $\alpha=ug$.
			$$\begin{tikzcd}
				\Lambda \arrow[r,"g"] \arrow[d,"\alpha"] & T_0 \arrow[dl,dashed,"u"] \\
				U
			\end{tikzcd}$$
			Then, since $a\in \ker g$, we have
			$$av=\alpha(a)=(ug)(a)=u(g(a))=0,$$
			so $a\in \ann \T$, giving $\ker g\subseteq \ann \T$ and proving the claim. Part (a) follows.
			
			Since $T$ is $\tau$-rigid, it follows from~\cite[Prop. 1.2(b)]{air} that $T\in \P(\Gen T)=\P(\T)$.
			Since $\T$ is a functorially finite torsion class, it follows from~\cite[Prop. 1.1(e)]{air} that $P(\T)$ is a tilting module over $\Lambda/\ann \T$.
			Hence, $\Ext^1_{\Lambda/\ann \T}(T,T)=0$ and
			the projective dimension of $T$ over $\Lambda/\ann T$ is at most $1$. Together with (a), this shows that $T$ is a tilting module over $\Lambda/\ann \T$, giving part (b).
			
			Since $T\in \P(\T)$ and both $T$ and $P(\T)$ are tilting modules over $\Lambda/\ann \T$, we must have $\add T =\add P(\T)$ (noting that the additive subcategory generated by a module in $\mods \Lambda/\ann \T$ is the same as the additive subcategory generated as a module in $\mods\Lambda$). Hence $\add T=\P(\T)$, giving part (c).
			
			Finally, by~\cite[Thm. 2.7]{air}, the module $T$ is a support $\tau$-tilting $\Lambda$-module, giving part (d).
		\end{proof}
		
		\begin{corollary}
			\label{cor:supporttau}
			Let $T$ be a $\Lambda$-module. Then $T$ is a support $\tau$-tilting module if and only if $T$ is $\tau$-rigid and there is an exact sequence
			\begin{equation}
				\label{eq:rightsequence}
				\begin{tikzcd}
					\Lambda \arrow[r,"g"] & T_0 \arrow[r] & T_1 \arrow[r] & 0,
				\end{tikzcd}
			\end{equation}
			where $T_0$ and $T_1$ lie in $\add T$ and $g$ is a left $\add T$-approximation of $\Lambda$.
		\end{corollary}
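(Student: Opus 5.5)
The two implications have quite different flavours, so I will treat them separately.

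The \emph{if} direction is exactly Lemma~\ref{lem:sequencegivessupport}(d). If $T$ is $\tau$-rigid and admits a right exact sequence \eqref{eq:rightsequence} with $T_0,T_1\in\add T$ and $g$ a left $\add T$-approximation, then that lemma applies verbatim and gives that $T$ is support $\tau$-tilting. Nothing more needs to be done.

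For the \emph{only if} direction, let $T$ be support $\tau$-tilting; by definition it is $\tau$-rigid. Set $\T=\Gen T$. By the Adachi--Iyama--Reiten correspondence \cite[Thm.~2.7]{air}, $\T$ is a functorially finite torsion class with $\add T=\P(\T)$. By \cite[Prop.~1.1(e)]{air} (as already used in the proof of Lemma~\ref{lem:sequencegivessupport}), $T$ is then a tilting module over $\bar\Lambda=\Lambda/\ann\T$. So there is a short exact sequence of $\bar\Lambda$-modules
$$0\to \bar\Lambda \xrightarrow{\bar g} T_0\to T_1\to 0,$$
with $T_0,T_1\in\add T$. Moreover, $\bar g$ may be taken to be a minimal left $\add T$-approximation of $\bar\Lambda$. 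The standard construction for tilting modules gives exactly this: take $\bar g$ to be a left $\add T$-approximation; its cokernel then lies in $\add T$, using $\Ext^1(T,T)=0$ and the tilting property.

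Now compose with the canonical surjection $\pi\colon\Lambda\to\bar\Lambda$ and set $g=\bar g\pi$. Since $\pi$ is surjective, $\operatorname{coker} g=\operatorname{coker}\bar g=T_1$, and this yields the right exact sequence \eqref{eq:rightsequence}.

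It remains to check that $g$ is a left $\add T$-approximation of $\Lambda$. Let $h\colon\Lambda\to T'$ with $T'\in\add T$. Since $\ann\T$ annihilates $T'$, $h$ vanishes on $\ann\T$ and so factors as $h=\bar h\pi$ for some $\bar h\colon\bar\Lambda\to T'$. Because $\bar g$ is a left $\add T$-approximation, $\bar h=u\bar g$ for some $u$, and hence $h=u\bar g\pi=ug$.

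The step that needs the most care is justifying that the tilting coresolution of $\bar\Lambda$ can be chosen with $\bar g$ an $\add T$-approximation. I would argue this directly rather than quote it. Take $\bar g$ to be a left $\add T$-approximation of $\bar\Lambda$. It is injective because $T$ is faithful over $\bar\Lambda$: indeed $\bar\Lambda$ embeds in a direct sum of copies of $T$, since $\ann T=\ann\T$. For the cokernel $C$, the approximation property together with $\Ext^1_{\bar\Lambda}(T,T)=0$ gives $\Ext^1_{\bar\Lambda}(C,T)=0$, and $C\in\Gen T=\T$. It follows that $C$ is Ext-projective in $\T$. Here one uses that $\Ext^1_{\Lambda}$ and $\Ext^1_{\bar\Lambda}$ agree on $\T$ in the relevant degree, or alternatively \cite[Prop.~1.1]{air}. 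Hence $C\in\P(\T)=\add T$.
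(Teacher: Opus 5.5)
Your ``if'' direction is the same as the paper's: both simply invoke Lemma~\ref{lem:sequencegivessupport}(d). Your ``only if'' direction is correct but argues differently. The paper takes a minimal left $\Gen T$-approximation $g\colon\Lambda\to T_0$ and completes it to a right exact sequence. It then quotes \cite[Lemma~3.7]{ms} to get $\add(T_0\amalg T_1)=\P(\T)$, which is $\add T$ by \cite[Thm.~2.7]{air}. You instead pass to $\bar\Lambda=\Lambda/\ann\T$ and use that $T$ is tilting there, which is the same input as in Lemma~\ref{lem:sequencegivessupport}(b). You then pull a tilting coresolution of $\bar\Lambda$ back along $\Lambda\twoheadrightarrow\bar\Lambda$, noting that every map from $\Lambda$ into $\add T$ kills $\ann\T$. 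The paper's route is shorter, given the external lemma. Yours avoids \cite{ms} entirely. It also makes clear that the sequences in the corollary are exactly pullbacks of tilting coresolutions of $\Lambda/\ann\T$, so it is a converse to Lemma~\ref{lem:sequencegivessupport}(a).

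Two small remarks on your last paragraph. First, the step you single out as delicate is automatic. The defining sequence $0\to\bar\Lambda\xrightarrow{\bar g}T_0\to T_1\to0$ of the tilting module already has $\bar g$ a left $\add T$-approximation. To see this, apply $\Hom_{\bar\Lambda}(-,T')$ with $T'\in\add T$; since $\Ext^1_{\bar\Lambda}(T_1,T')=0$, every map $\bar\Lambda\to T'$ factors through $\bar g$. Second, if you keep your own construction, the passage from $\Ext^1_{\bar\Lambda}(C,T)=0$ to $\Ext^1(C,\T)=0$ needs one more line. From $0\to\bar\Lambda\to T_0\to C\to0$, $C$ has projective dimension at most $1$ over $\bar\Lambda$. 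Hence $\Ext^1_{\bar\Lambda}(C,-)$ is right exact, and vanishing on $T$ propagates to all of $\Gen T$. Your observation that $\Ext^1_\Lambda$ and $\Ext^1_{\bar\Lambda}$ agree on $\T$ then finishes it, since extensions of objects of $\T$ lie in $\T$ and are therefore $\bar\Lambda$-modules.
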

		\begin{proof}
			First, suppose that $T$ is a support $\tau$-tilting module. Then, by definition, $T$ is $\tau$-rigid. Let
			$g:\Lambda\rightarrow T_0$ be a minimal left $\T=\Gen T$-approximation of $\Lambda$. Completing this to an exact
			sequence, we obtain
			$$
			\begin{tikzcd}
				\Lambda \arrow[r,"g"] & T_0 \arrow[r] & T_1 \arrow[r] & 0.
			\end{tikzcd}
			$$
			By~\cite[Lemma 3.7]{ms}, we have $\add(T_0\amalg T_1)=\P(\T)$.
			Since $T$ is a support $\tau$-tilting module, we have that $T=P(\T)$ by~\cite[Thm.\ 2.7]{air}, and hence $T_0,T_1\in \add T$ as required.
			
			Conversely, assume that $T$ is $\tau$-rigid and that there is an exact sequence of the form~\eqref{eq:rightsequence} where
			$T_0$ and $T_1$ lie in $\add T$ and $g$ is a left $\add T$-approximation of $\Lambda$. Since $T_0$ and $T_1$ lie in $\add T$, we have that $g$ is an $\add(T_0\coprod T_1)$-approximation of $\Lambda$ so, by Lemma~\ref{lem:sequencegivessupport}, it follows that $T$ is a support $\tau$-tilting module.
		\end{proof}
		
		A consequence of Corollary~\ref{cor:supporttau} is the following, which is attributed to~\cite{dirrt} in~\cite[Prop.\ 5.1]{iy}.
		
		\begin{proposition} \cite{dirrt}, \cite[Prop. 5.1]{iy}
			\label{prop:Fsupporttau}
			Let $T$ be a support $\tau$-tilting $\Lambda$-module. Then $F(T)$ is a support $\tau$-tilting $\Lambda/I$-module.
		\end{proposition}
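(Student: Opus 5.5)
The plan is to use the characterization in Corollary~\ref{cor:supporttau} on both sides. Since $T$ is support $\tau$-tilting over $\Lambda$, Corollary~\ref{cor:supporttau} gives an exact sequence
$$\Lambda \xrightarrow{g} T_0 \to T_1 \to 0$$
with $T_0,T_1\in\add T$ and $g$ a left $\add T$-approximation of $\Lambda$. We apply the right exact functor $F=\frac{\Lambda}{I}\otimes_\Lambda -$. This yields an exact sequence
$$\Lambda/I \xrightarrow{F(g)} F(T_0)\to F(T_1)\to 0,$$
with $F(T_0),F(T_1)\in\add F(T)$. It then suffices to check two things: that $F(T)$ is $\tau$-rigid over $\Lambda/I$, and that $F(g)$ is a left $\add F(T)$-approximation of $\Lambda/I$. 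The converse direction of Corollary~\ref{cor:supporttau}, applied over $\Lambda/I$, then gives the result.

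\emph{Approximation property.} Let $h\colon \Lambda/I\to Y$ with $Y\in\add F(T)$. By Lemma~\ref{lem:Gen-preserved}(a), $Y\in\Gen F(T)=\Gen T\cap\mods\Lambda/I\subseteq \Gen T$. Compose $h$ with the projection $\pi\colon\Lambda\to\Lambda/I$ to get $h\pi\colon \Lambda\to Y$. By Lemma~\ref{lem:addTenough}, $g$ is a left $\Gen T$-approximation, so $h\pi=vg$ for some $v\colon T_0\to Y$. Since $Y$ is a $\Lambda/I$-module, $v$ factors uniquely as $\bar v\circ\eta_{T_0}$, where $\eta_{T_0}\colon T_0\to F(T_0)=T_0/IT_0$ is the unit. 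Using naturality of $\eta$, we get $\bar v F(g)\pi=\bar v\eta_{T_0}g=vg=h\pi$. Since $\pi$ is surjective, $h=\bar vF(g)$.

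\emph{$\tau$-rigidity of $F(T)$.} This is the step I expect to be the main obstacle, because $\tau_{\Lambda/I}$ and $\tau_\Lambda$ differ. I would use the standard criterion \cite[Prop.~5.8]{as}: a module $M$ is $\tau$-rigid if and only if $\Gen M\subseteq {}^\perp(\tau M)$, equivalently if and only if $M$ is Ext-projective in $\Gen M$. So it suffices to show $\Ext^1_{\Lambda/I}(F(T),X)=0$ for all $X\in\Gen F(T)$.

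Take $X\in\Gen F(T)$. Then $X\in\Gen T$ by Lemma~\ref{lem:Gen-preserved}(a), and since $T$ is $\tau$-rigid, $\Ext^1_\Lambda(T,X)=0$. Now consider an extension $0\to X\to E\to F(T)\to 0$ over $\Lambda/I$. It is in particular an extension over $\Lambda$, and $E\in\Gen T$ because $\Gen T$ is closed under extensions and $F(T)\in\Gen T$. Pull it back along the surjection $\eta_T\colon T\to F(T)$. The resulting $\Lambda$-extension of $T$ by $X$ splits, so $\eta_T$ lifts to a map $T\to E$. This map factors through $F(T)$ because $E$ is a $\Lambda/I$-module. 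The factored map is a section of $E\to F(T)$, since this holds after precomposing with the surjection $\eta_T$. Hence the original sequence splits.

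Alternatively, one can cite \cite[Prop.~5.6]{dirrt} for the fact that $F$ preserves $\tau$-rigidity. With both properties in hand, Corollary~\ref{cor:supporttau} finishes the proof.
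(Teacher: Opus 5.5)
Your proposal is correct and follows the paper's proof: you apply $F$ to the right exact sequence from Corollary~\ref{cor:supporttau}, check that $F(g)$ is still a left approximation, check that $F(T)$ is $\tau_{\Lambda/I}$-rigid, and invoke the corollary again over $\Lambda/I$. The differences are minor. You verify the approximation property using Lemma~\ref{lem:addTenough} and the unit $T_0\to F(T_0)$, rather than lifting through $S\to F(S)$ via projectivity of $\Lambda$. You also prove $\tau_{\Lambda/I}$-rigidity directly, via the Auslander--Smal\o{} criterion and the injectivity of $\Ext^1_{\Lambda/I}(F(T),X)\to\Ext^1_\Lambda(T,X)$, where the paper simply cites \cite[Prop.~5.6(a)]{dirrt}.
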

		\begin{proof}
			Let $T$ be a support $\tau$-tilting $\Lambda$-module and let $\T=\Gen T$. By Corollary~\ref{cor:supporttau}, there is a right exact sequence
			of $\Lambda$-modules:
			$$
			\begin{tikzcd}
				\Lambda \arrow[r,"g"] & T_0 \arrow[r] & T_1 \arrow[r] & 0,
			\end{tikzcd}
			$$
			where $T_0$ and $T_1$ lie in $\add T$ and $g$ is a left $\add T$-approximation of $\Lambda$. Since $F$ is right exact, we obtain a right exact sequence of $\Lambda/I$-modules:
			\begin{equation}
				\label{eq:FTexact}
				\begin{tikzcd}
					\Lambda/I \arrow[r,"F(g)"] & F(T_0) \arrow[r] & F(T_1) \arrow[r] & 0.
				\end{tikzcd}
			\end{equation}

			We claim that $F(g)$ is a left $F(\add T)$-approximation. Let $S\in \add T$ and let $\pi_{\Lambda}:\Lambda\rightarrow \Lambda/I$ and $\pi_S:S\rightarrow F(S)$ be the natural quotient maps. Suppose we are given a map $u:\Lambda/I\rightarrow F(S)$. Since $\Lambda$ is projective, the map $u\pi_{\Lambda}$ factors through $\pi_S$ giving $u\pi_{\Lambda}=\pi_Sv$ for some map $v:\Lambda\rightarrow S$. Then $u=F(v)$. Since $g$ is a left $\add T$-approximation, there is a map $w:T_0\rightarrow S$ such that $v=wg$. Then $u=F(v)=F(w)F(g)$ and $u$ factors through $F(g)$ as required.
			$$\begin{tikzcd}
				\Lambda \arrow[dr,"v",dashed] \arrow[dd,"\pi_{\Lambda}"] \arrow[rr,"g"] && T_0 \arrow[rr] \arrow[dl,"w",dashed] && T_1 \arrow[rr] && 0 \\
				& S \arrow[dd,"\pi_S",pos=0.25] & \\
				\Lambda/I \arrow[rr,"F(g)",pos=0.25] \arrow[dr,"u"] && F(T_0) \arrow[rr] \arrow[dl,"F(w)",dashed] && F(T_1) \arrow[rr] && 0 \\
				& F(S)
			\end{tikzcd}
			$$
			Since $T$ is a $\tau$-rigid $\Lambda$-module, the 
			$\Lambda/I$-module $F(T)$ is $\tau_{\Lambda/I}$-rigid  by~\cite[Prop.\ 5.6(a)]{dirrt}. Noting the exact sequence~\eqref{eq:FTexact}, it follows from Corollary~\ref{cor:supporttau} that $F(T)$ is a support
			$\tau_{\Lambda/I}$-tilting $\Lambda/I$-module as required.
		\end{proof}
		
		\section{Lifting a $\tau$-rigid indecomposable module}\label{sec:ind}
		Our proof of Theorem \ref{thm:main1} is recursive. In this section, we consider the lifting of the rightmost object in a $\tau$-exceptional sequence, which is by definition a $\tau$-rigid indecomposable object $C$
		in $\mods \Lambda/I$. We assume in this section that $\Lambda$ is $\tau$-tilting finite, and that $I \subseteq \rad \Lambda$.
		Note that by \cite[Thm. 5.12]{dirrt}, the factor algebra $\Lambda/I$ is also $\tau$-tilting finite, for any ideal $I$.
		For a given factor algebra $\Lambda/I$, with the 
		extension-of-scalars functor $F = \Lambda/I \otimes_{\Lambda} -$, there may in general be several lifts of $C$ to indecomposable $\tau$-rigid $\Lambda$-modules. We need to choose such a lift $Z$ with the property that there is a functor $F_Z:\J(Z)\rightarrow \J_{\Lambda/I}(C)$ with similar properties to those of $F$. The lift that we define in this section is shown to have such properties in the next section.
		
		\begin{setup}
			\label{setupCZ}
			Let $C\in\mods \Lambda/I$ be an indecomposable $\tau_{\Lambda/I}$-rigid module. 
			Let $\W = \Filt \J_{\Lambda/I}(C)$ and let $Z= \Pmods({}^{\perp}\W)$.
		\end{setup}
		
		We now state and prove a series of lemmas, giving properties of the relationship between $Z$ and $C$ and the related torsion and torsion-free categories, and the $\tau$-perpendicular categories.
		
		\begin{lemma}\label{lem:known}
			In Setup~\ref{setupCZ}, the following hold:
			\begin{itemize}
				\item[(a)] $\W$ is a wide subcategory of $\mods \Lambda$;
				\item[(b)] $\Gen_{\Lambda/I}C={}^{\perp_{\Lambda/I}}\J_{\Lambda/I}(C)$;
				\item[(c)] $C=\Pmods({}^{\perp_{\Lambda/I}}\J_{\Lambda/I}(C))$.
			\end{itemize}
		\end{lemma}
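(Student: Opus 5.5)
The three parts are essentially known facts, so the plan is to assemble them from \cite{dirrt}, \cite{jasso}, \cite{bm} and \cite{bhm2}, using only the properties of $C$ as a $\tau_{\Lambda/I}$-rigid module.

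For (a), I will use that $\J_{\Lambda/I}(C)$ is a wide subcategory of $\mods\Lambda/I$ \cite[Thm.\ 4.12]{dirrt}. Since $I$ is an ideal, $\mods\Lambda/I$ is a full subcategory of $\mods\Lambda$ closed under kernels, cokernels, submodules and quotients, though not under extensions. Hence $\J_{\Lambda/I}(C)$ is closed under kernels and cokernels when computed in $\mods\Lambda$. The step I expect to be the main obstacle is closing up under extensions. Taking $\Filt$ should do this, because the extension closure of an abelian, exact-embedded subcategory of $\mods\Lambda$ is wide. Concretely, $\J_{\Lambda/I}(C)$ is a length category whose simple objects $\Simp\J_{\Lambda/I}(C)$ are pairwise Hom-orthogonal bricks. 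By Ringel's theorem, $\Filt$ of a set of Hom-orthogonal bricks in $\mods\Lambda$ is wide. So $\W=\Filt\J_{\Lambda/I}(C)=\Filt\Simp\J_{\Lambda/I}(C)$ is wide in $\mods\Lambda$.

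For (b), I will work entirely inside $\mods\Lambda/I$ and write $\J=\J_{\Lambda/I}(C)$.
\begin{itemize}
\item[$\subseteq$:] Since $\J\subseteq C^{\perp}$ and $C^{\perp}$ is closed under submodules, we get $\Gen C\subseteq{}^{\perp}\J$.
\item[$\supseteq$:] By Jasso's reduction \cite{jasso}, $\J=C^{\perp}\cap{}^{\perp}\tau C$ is the ``middle'' of the interval $[\Gen C,{}^{\perp}\tau C]$. That is, $\J=(\Gen C)^{\perp}\cap{}^{\perp}\tau C$, and for $X\in{}^{\perp}\tau C$ the torsion-free part $f_C(X)$ lies in $\J$. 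Now let $X\in{}^{\perp}\J$. First I need $X\in{}^{\perp}\tau C$, or I need to show directly that $f_C(X)=0$. For this I will use the known description ${}^{\perp}\J=\FiltGen(\Gen C\cup\J)$-type arguments; the cleanest route is \cite[Thm.\ 3.12 / Lemma 3.8]{bm} or \cite[Prop.\ 2.?]{bhm2}, which states exactly that ${}^\perp\J(C)=\Gen C$ for indecomposable $\tau$-rigid $C$.
\end{itemize}
If I must argue it myself, I would pass to the $\tau$-tilting finite setting. There ${}^{\perp}\J$ is a torsion class containing $\Gen C$. Any torsion class $\T$ with $\Gen C\subsetneq\T$ contains a brick in $(\Gen C)^{\perp}\cap\T$ which is minimal. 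Such a brick lies in ${}^{\perp}\tau C$ as long as $\T\subseteq{}^{\perp}\tau C$, giving a nonzero element of $\J\cap{}^{\perp}\J$, which is a contradiction. Handling torsion classes not inside ${}^{\perp}\tau C$ is the delicate part, and I would rely on the cited result there.

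For (c), I will combine (b) with the gen-minimal variant of the Adachi--Iyama--Reiten correspondence \cite[Thm.\ 1.1]{bhm2}. The module $C$ is $\tau$-rigid, indecomposable and basic, and hence gen-minimal. Therefore $\Pmods(\Gen C)=C$, and by (b) this is $\Pmods({}^{\perp_{\Lambda/I}}\J_{\Lambda/I}(C))$.
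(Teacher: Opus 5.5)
Your proposal follows the paper's proof in all three parts: (a) uses the semibrick $\Simp\J_{\Lambda/I}(C)$ and Ringel's theorem, (b) quotes that ${}^{\perp}\J(M)=\Gen M$ for gen-minimal (e.g.\ indecomposable) $\tau$-rigid $M$, and (c) combines (b) with the gen-minimal Adachi--Iyama--Reiten correspondence. The precise reference for (b) is \cite[Thm.~2.14]{bhm}, and you should rely on it rather than your fallback sketch, which never establishes ${}^{\perp}\J_{\Lambda/I}(C)\subseteq{}^{\perp}\tau_{\Lambda/I}C$ (exactly the point where gen-minimality, here indecomposability, of $C$ must enter); note also that the easy inclusion $\Gen C\subseteq{}^{\perp}\J$ holds because $\Hom(C,\J)=0$ and ${}^{\perp}\J$ is closed under quotients, not because $C^{\perp}$ is closed under submodules.
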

		
		\begin{proof}
			For (a), note that since 
			$\J_{\Lambda/I}(C)$ is a wide subcategory of $\mods \Lambda/I$, we have that 
			$\Simp \J_{\Lambda/I}(C)$ is a semibrick in $\mods \Lambda/I$, and hence also in 
			$\mods \Lambda$. It follows that
			$$\W = \Filt \J_{\Lambda/I}(C) =  \Filt \Simp \J_{\Lambda/I}(C)$$ is a wide subcategory of $\mods \Lambda$.
			
			(b) follows from \cite[Thm.\ 2.14]{bhm}, using that $C$ is indecomposable and hence gen-minimal.
			
			(c) follows from (b), using \cite[\S2.3]{air} (cf.~\cite[Thm.\ 1.6(b)]{bhm}).
		\end{proof}

		\begin{lemma}
			\label{lem:CZinitial}
			In Setup~\ref{setupCZ}, the following hold:
			\begin{itemize}
				\item[(a)] $Z$ is gen-minimal and $\tau$-rigid;
				\item[(b)] $\J(Z)=\W=\Filt \J_{\Lambda/I}(C)$ and so $\J(Z)$ and $\J_{\Lambda/I}(C)$ have the same rank;
				\item[(c)] $Z$ is indecomposable;
				\item[(d)] $Z=\Pmods(\Gen Z)$;
				\item[(e)] $\Gen Z={}^{\perp}\J(Z)$;
				\item[(f)] $Z$ is projective in $\mods \Lambda$ if and only if $C$ is 
				projective in $\mods \Lambda/I$. In this case $F(Z)=C$. 
			\end{itemize}
		\end{lemma}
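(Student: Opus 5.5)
The plan is to treat $\W$ as a wide subcategory of $\mods\Lambda$, using Lemma~\ref{lem:known}(a), and then apply the bijection between wide subcategories and gen-minimal $\tau$-rigid modules for $\tau$-tilting finite algebras. Since $\Lambda$ is $\tau$-tilting finite, ${}^{\perp}\W$ is a functorially finite torsion class. By the results of \cite{bhm} (cf. \cite[Thm.\ 1.1]{bhm2}, \cite{jasso}), for a wide subcategory $\W$ the module $\Pmods({}^\perp\W)$ is gen-minimal and $\tau$-rigid, with
$$\Gen \Pmods({}^\perp\W)={}^\perp\W, \qquad \J\bigl(\Pmods({}^\perp\W)\bigr)=\W .$$
I will invoke this directly to get (a) and the first part of (b). Then (d) follows from $\Gen Z={}^\perp\W$ together with the gen-minimal version of the Adachi--Iyama--Reiten correspondence recalled in Section~\ref{sec:background}. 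Part (e) follows because $\Gen Z={}^\perp\W={}^\perp\J(Z)$. The fact that $\J(Z)$ and $\J_{\Lambda/I}(C)$ have the same rank is immediate, because $\Filt$ of a semibrick has exactly that semibrick as its simples.

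For (c) I will count ranks. Since $I\subseteq\rad\Lambda$, we have $\rk\Lambda/I=\rk\Lambda=n$. Hence $\rk\J_{\Lambda/I}(C)=n-1$ because $C$ is indecomposable, and so $\rk\J(Z)=n-1$ by (b). By Jasso reduction, $\rk\J(Z)=n-|Z|$ for a basic $\tau$-rigid $Z$. This gives $|Z|=1$, so $Z$ is indecomposable.

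For (f) I first establish the auxiliary identity $\Gen F(Z)=\Gen C$. Lemma~\ref{lem:Gen-preserved}(a) gives $\Gen F(Z)=\Gen Z\cap\mods\Lambda/I={}^\perp\W\cap\mods\Lambda/I$. A $\Lambda/I$-module has no nonzero maps to $\W=\Filt\Simp\J_{\Lambda/I}(C)$ if and only if it has none to $\J_{\Lambda/I}(C)$. Hence this intersection equals ${}^{\perp_{\Lambda/I}}\J_{\Lambda/I}(C)$, which is $\Gen_{\Lambda/I}C$ by Lemma~\ref{lem:known}(b).

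Suppose first that $Z=\Lambda e$ with $e$ primitive. Then $F(Z)=(\Lambda/I)e$ is indecomposable projective (its top is unchanged since $I\subseteq\rad\Lambda$) and lies in $\Gen C$. Being projective, it is split projective in $\Gen C$, so it lies in $\add\Pmods(\Gen C)=\add C$ by Lemma~\ref{lem:known}(c). Therefore $F(Z)\cong C$, and in particular $C$ is projective.

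Conversely, suppose $C=(\Lambda/I)e$ for a primitive idempotent $e$. Then $\J_{\Lambda/I}(C)=\mods(\Lambda/I)/\langle e\rangle$, whose simples are the simples $S_j$ with $j\neq$ the vertex of $e$. So $\W=\Filt\{S_j\}=\mods\Lambda/\langle e\rangle=(\Lambda e)^\perp=\J(\Lambda e)$. Since $\Lambda e$ is indecomposable and so gen-minimal, injectivity of the bijection $M\mapsto\J(M)$ on gen-minimal $\tau$-rigid modules gives $Z=\Lambda e$, which is projective.

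The main obstacle is making sure the cited bijection $\W\mapsto\Pmods({}^\perp\W)$, with $\J(\Pmods({}^\perp\W))=\W$, is available in exactly this form in \cite{bhm}, and that it is injective so it can be used in the converse of (f). If it is not, I would fall back on proving $\J(Z)=\W$ through $\J(Z)=Z^\perp\cap{}^\perp\tau Z$ and the characterization of $\Pmodns$ versus $\Pmods$ from \cite{bhm}.
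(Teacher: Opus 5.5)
Your proposal is correct. Parts (a)--(e) follow the paper's line, including the rank count for (c). The one difference there is that you treat $\J(\Pmods({}^{\perp}\W))=\W$ as a black box and flag it as the main obstacle. The paper obtains it in two steps, so no fallback via $\Pmodns$ is needed:
\begin{enumerate}
\item By \cite[Thm.\ 2.14(a)]{bhm}, ${}^{\perp}\J(Z)=\Gen Z={}^{\perp}\W$, since $Z$ is gen-minimal.
\item In the $\tau$-tilting finite case, $\W\mapsto\FiltCogen(\W)=({}^{\perp}\W)^{\perp}$ is injective on wide subcategories \cite{ms}, \cite[Thm.\ 2.10]{bhm}. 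Hence $\J(Z)=\W$.
\end{enumerate}
The same identity ${}^{\perp}\J(M)=\Gen M$, together with $M=\Pmods(\Gen M)$, gives the injectivity of $M\mapsto\J(M)$ on gen-minimal $\tau$-rigid modules that you need in the converse of (f). The paper cites \cite[Prop.\ 6.15]{bh} for this, and your converse is otherwise the paper's.

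The genuinely different step is the forward direction of (f).
\begin{itemize}
\item Your route: you first show $\Gen_{\Lambda/I}F(Z)=\Gen_{\Lambda/I}C$. This is the first assertion of Lemma~\ref{lem:FGenZ}, but you derive it here from (a), (b), Lemma~\ref{lem:Gen-preserved}(a) and Lemma~\ref{lem:known}(b), so there is no circularity. You then note that the indecomposable projective $F(Z)$ is split projective in this torsion class, hence a summand of $\Pmods(\Gen_{\Lambda/I}C)=C$.
\item The paper's route: it computes $\Simp(\mods\Lambda)\setminus\{\ttop Z\}=\Simp\J(Z)$. From the equality of simples in (b) it deduces $\J_{\Lambda/I}(C)=\J_{\Lambda/I}(F(Z))$, and then uses injectivity of $\J$ on indecomposable $\tau$-rigid modules.
\end{itemize}
Your argument stays at the level of torsion classes and needs only the elementary fact that projectives are split projective. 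The paper's argument uses a single tool, comparison of $\tau$-perpendicular categories, symmetrically in both directions of (f).
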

		\begin{proof}
			For (a), note that $Z$ is gen-minimal by~\cite[\S 2.3]{air} (see~\cite[Thm.\ 1.6]{bhm}) and $\tau$-rigid by~\cite[Thm.\ 2.7]{air}.
			
			We then have:
			\begin{align*}
				{}^{\perp}\J(Z) &= {}^\perp{}\J(\Pmods({}^{\perp}W)) \\
				&= \Gen(\Pmods({}^{\perp}W)) \quad \text{by~\cite[Thm.\ 2.14(a)]{bhm}} \\
				&= {}^{\perp}\W.
			\end{align*}
			By~\cite[VI.1.2]{ass06}, we have the torsion pairs:
			$$({}^{\perp}\J(Z),({}^{\perp}\J(Z))^{\perp})=({}^{\perp}\J(Z),\FiltCogen (\J(Z))),$$
			and
			$$({}^{\perp}\W,({}^{\perp}\W)^{\perp})=({}^{\perp}\W,\FiltCogen (\W)).$$
			Since ${}^{\perp}\J(Z)={}^{\perp}\W$, we have $\FiltCogen (\J(Z))=\FiltCogen (\W)$.
			Now, recall that in the $\tau$-tilting finite case, the map $\FiltCogen$ is a bijection from wide subcategories to torsion-free classes (\cite{ms}, see also \cite[Thm. 2.10]{bhm}).
			It follows that $\J(Z)=\W$. Furthermore, we have $\Simp \J(Z)=\Simp \W=\Simp \J_{\Lambda/I}(C)$ by~\cite[Thm.\ 1.2]{ringel}, so $\W$ has rank $\rk \Lambda-1$, as this is the rank of $\J_{\Lambda/I}(C)$ by~\cite[Thm.\ 3.8]{jasso}. This proves (b).
			
			Using again~\cite[Thm.\ 3.8]{jasso}, we have that $Z$ is indecomposable, giving (c).
			
			Then part (d) follows from part (a) and~\cite[Sect.\ 2.3]{air} (see~\cite[Thm.\ 1.6(b)]{bhm}). Part
			(e) follows from \cite[Thm.\ 2.14]{bhm}, using that $Z$ is gen-minimal by (a).
			
			For (f), first note that since $I\subseteq \rad \Lambda$, we have $\Simp \mods \Lambda = \Simp \mods \Lambda/I$.
			
			If $Z$ is projective, then $F(Z)$ is indecomposable projective in $\mods \Lambda/I$, with $\ttop Z = \ttop F(Z)$. 
			We then have $\Simp \J(Z)= \{S \mid S \in \Simp{\mods \Lambda}\} \setminus \{\ttop Z\}$.
			Since  $\Simp \J_{\Lambda/I}(C) = \Simp \J(Z)$, we have
			$\J_{\Lambda/I} (C) 
			=  \Filt_{\Lambda/I} (\{S \mid S \in \Simp{\mods \Lambda/I}\} \setminus \{\ttop F(Z) \}) = F(Z)^{\perp_{\Lambda/I}} = \J_{\Lambda/I}(F(Z))$. 
			By \cite[Prop. 6.15]{bh}, we then have $F(Z)=C$ and $C$ is hence projective.  
			
			Assume $C$ is projective in $\mods \Lambda/I$ and let $Z'$ be the (indecomposable) projective in $\mods \Lambda$ such that $F(Z') = C$. Then, arguing as above, the wide subcategories $\J(Z')$ and 
			$\J_{\Lambda/I}(F(Z'))$ have the same simples. Hence, we have $\J(Z) = \J(Z')$, which by \cite[Prop. 6.15]{bh} implies that $Z=Z'$.
		\end{proof}

		Note that for any subcategory $\X$ of $\mods \Lambda/I$, viewed as a full subcategory of $\mods \Lambda$, we have 
		\begin{equation}\label{eq-c}
			\X^{\perp} \cap \mods \Lambda/I = \X^{\perp_{\Lambda/I}} \text{     and     }
			{^{\perp}{\X}} \cap \mods \Lambda/I =	{^{\perp_{\Lambda/I}}{\X}}.
		\end{equation}
		
		\begin{lemma}
			\label{lem:TJC}
			In Setup~\ref{setupCZ}, we have
			$$\FiltGen (\J_{\Lambda/I}(C))\cap \mods \Lambda/I=\FiltGen_{\Lambda/I}(\J_{\Lambda/I}(C)).$$
		\end{lemma}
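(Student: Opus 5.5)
We prove the two inclusions separately. Write $\J = \J_{\Lambda/I}(C)$, viewed as a subcategory of $\mods \Lambda/I \subseteq \mods \Lambda$.

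The inclusion $\FiltGen_{\Lambda/I}(\J) \subseteq \FiltGen(\J)\cap \mods \Lambda/I$ is immediate. $\mods \Lambda/I$ is closed under quotients and extensions inside $\mods \Lambda$, so the smallest torsion class of $\mods\Lambda/I$ containing $\J$ sits inside the smallest torsion class of $\mods\Lambda$ containing $\J$. Concretely, $\FiltGen_{\Lambda/I}(\J) = \Filt_{\Lambda/I}\Gen_{\Lambda/I}(\J)$, and both $\Gen$ and $\Filt$ computed in $\mods\Lambda/I$ are contained in the corresponding constructions in $\mods\Lambda$.

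For the reverse inclusion we plan to use Proposition~\ref{prop:dirrt-facts}(b). Set $\T' = \FiltGen_{\Lambda/I}(\J)$, a torsion class in $\mods \Lambda/I$, and $\T = \phi^+(\T')$. First we check that $\T$ is a torsion class in $\mods \Lambda$; this is the main point.
\begin{itemize}
\item[(i)] Closure under quotients: if $X \twoheadrightarrow Y$, then $F(X)\twoheadrightarrow F(Y)$ by right exactness, so $F(Y)\in\T'$.
\item[(ii)] Closure under extensions: given $0\to X\to Y\to Z\to 0$ with $X,Z\in\T$, right exactness gives an exact sequence $F(X)\to F(Y)\to F(Z)\to 0$. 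The image $K$ of $F(X)$ in $F(Y)$ is a quotient of $F(X)$, so $K\in\T'$. Then $F(Y)$ is an extension of $F(Z)$ by $K$, hence lies in $\T'$.
\item[(iii)] Closure under direct sums: clear, since $F$ is additive.
\end{itemize}
(This is presumably the content behind \cite[Prop.~5.7]{dirrt}, so it may be citable directly.)

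Since $F$ restricted to $\mods\Lambda/I$ is isomorphic to the identity, every object of $\J\subseteq \T'$ lies in $\T$. By minimality, $\FiltGen(\J)\subseteq \T$. Intersecting with $\mods\Lambda/I$ and applying Proposition~\ref{prop:dirrt-facts}(b) gives
$$\FiltGen(\J)\cap\mods\Lambda/I \subseteq \phi^+(\T')\cap\mods\Lambda/I = \T',$$
which completes the proof.

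The only step we expect to require care is the extension closure of $\phi^+(\T')$, since $F$ is only right exact. Passing to the image of $F(X)$ in $F(Y)$, as in (ii), handles this. Note that the hypothesis $I\subseteq\rad\Lambda$ and $\tau$-tilting finiteness do not seem to be needed for this lemma.
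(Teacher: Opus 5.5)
Your proof is correct, but for the substantive inclusion $\FiltGen(\J)\cap\mods\Lambda/I\subseteq\FiltGen_{\Lambda/I}(\J)$ it takes a different route from the paper.

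The paper works with perpendicular categories. Writing $\X=\J_{\Lambda/I}(C)$, it uses $\FiltGen(\X)={}^{\perp}(\X^{\perp})$, so intersecting with $\mods\Lambda/I$ gives ${}^{\perp_{\Lambda/I}}(\X^{\perp})$. The trivial containment $\X^{\perp_{\Lambda/I}}\subseteq\X^{\perp}$ then immediately yields ${}^{\perp_{\Lambda/I}}(\X^{\perp})\subseteq{}^{\perp_{\Lambda/I}}(\X^{\perp_{\Lambda/I}})=\FiltGen_{\Lambda/I}(\X)$, with no closure properties to check by hand. You instead verify directly that $\phi^+(\FiltGen_{\Lambda/I}(\J))$ is a torsion class of $\mods\Lambda$ containing $\J$, and conclude by minimality. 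Your image-of-$F(X)$ argument for extension closure is fine.

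The two arguments are closer than they look. By the adjunction $\Hom_{\Lambda/I}(F(X),Y)\simeq\Hom_\Lambda(X,Y)$ (Lemma~\ref{lem:adjunction}), one has $\phi^+(\FiltGen_{\Lambda/I}(\J))={}^{\perp}(\J^{\perp_{\Lambda/I}})$. So your torsion class is exactly the left perpendicular category implicit in the paper's comparison, and its being a torsion class is then automatic. The paper's version is shorter; yours is more elementary and self-contained, essentially reproving the relevant part of \cite[Prop.~5.7]{dirrt}. For the easy inclusion both arguments amount to minimality. You are also right that neither $I\subseteq\rad\Lambda$ nor $\tau$-tilting finiteness is used: the statement holds with $\J$ replaced by any subcategory of $\mods\Lambda/I$.

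One correction: $\mods\Lambda/I$ is \emph{not} closed under extensions in $\mods\Lambda$. Take $\Lambda=k[x]/(x^2)$, $I=(x)$ and $0\to k\to\Lambda\to k\to 0$. In fact it is extension-closed only when $I=I^2$, which for $I\subseteq\rad\Lambda$ forces $I=0$. You do not need this remark, though. All you use is that a short exact sequence of $\Lambda/I$-modules is also one of $\Lambda$-modules, so $\FiltGen(\J)\cap\mods\Lambda/I$ is a torsion class of $\mods\Lambda/I$ containing $\J$. Your ``concretely'' argument via $\Filt\Gen$ already justifies that inclusion correctly.
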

		
		\begin{proof}
			Let $\X =\J_{\Lambda/I}(C)$.
			We claim that $$\FiltGen (\X) \cap \mods \Lambda/I ={\lperp{(\X^{\perp})}} \cap \mods \Lambda/I
			= {^{\perp_{\Lambda/I}} {(\X^{\perp})}}
			\subseteq {^{\perp_{\Lambda/I}} {(\X^{\perp_{\Lambda/I}})}} =\FiltGen_{\Lambda/I}(\X).$$
			
			The inclusion follows from $\X^{\perp_{\Lambda/I}}
			\subseteq \X^{\perp}$.
			We have that $\FiltGen_{\Lambda/I}(\X)$ is the smallest torsion class in $\mods \Lambda/I$ containing $\X$. But $\FiltGen (\X) \cap \mods \Lambda/I$ is a torsion class 
			by \cref{prop:dirrt-facts} and contains $\X$. So we
			must have equality as claimed.
		\end{proof}

		\begin{lemma}
			\label{lem:FGenZ}
			In Setup~\ref{setupCZ}, we have
			$F(\Gen Z) = \Gen_{\Lambda/I}C$ and $Z^\perp \cap \mods \Lambda/I  = C^{\perp_{\Lambda/I}}$.
		\end{lemma}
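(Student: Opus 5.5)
Both equalities will be obtained by intersecting the perpendicular-category descriptions with $\mods \Lambda/I$ and using \eqref{eq-c}. The key input is Lemma~\ref{lem:CZinitial}(b),(e), which together give $\Gen Z={}^{\perp}\J(Z)={}^{\perp}\W$ with $\W=\Filt\J_{\Lambda/I}(C)$.

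\textbf{First equality.} Since $Z$ is $\tau$-rigid, Lemma~\ref{lem:Gen-preserved}(a) gives $F(\Gen Z)=\Gen Z\cap\mods\Lambda/I$. It therefore suffices to show $\Gen Z\cap \mods\Lambda/I=\Gen_{\Lambda/I}C$. We have ${}^{\perp}\W={}^{\perp}\J_{\Lambda/I}(C)$, since a module has no maps to any object of $\J_{\Lambda/I}(C)$ if and only if it has none to iterated extensions of such objects. Hence
$$\Gen Z\cap\mods\Lambda/I={}^{\perp}\J_{\Lambda/I}(C)\cap\mods\Lambda/I={}^{\perp_{\Lambda/I}}\J_{\Lambda/I}(C)=\Gen_{\Lambda/I}C,$$
where the middle step is \eqref{eq-c} and the last is Lemma~\ref{lem:known}(b).

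\textbf{Second equality.} Note that $Z^\perp=(\Gen Z)^\perp$ and, likewise, $C^{\perp_{\Lambda/I}}=(\Gen_{\Lambda/I}C)^{\perp_{\Lambda/I}}$. Let $X\in\mods\Lambda/I$.

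If $X\in Z^\perp$, then $X\in(\Gen Z)^\perp$. Since $C\in\Gen_{\Lambda/I}C\subseteq\Gen Z$ by the first part, we get $\Hom(C,X)=0$, i.e. $X\in C^{\perp_{\Lambda/I}}$.

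Conversely, suppose $\Hom(C,X)=0$ and let $f\colon Z\to X$ be any map. Since $X$ is a $\Lambda/I$-module, $f$ factors through the quotient $Z\to F(Z)$. Now $F(Z)\in F(\Gen Z)=\Gen_{\Lambda/I}C$, and $\Gen_{\Lambda/I}C\subseteq {}^{\perp_{\Lambda/I}}X$ because $\Hom(C,X)=0$. Hence the induced map $F(Z)\to X$ is zero, so $f=0$.

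One could alternatively phrase this via the torsion pair $(\Gen Z\cap\mods\Lambda/I,\;Z^\perp\cap\mods\Lambda/I)$ from Proposition~\ref{prop:dirrt-facts}(a). Its torsion class is $\Gen_{\Lambda/I}C$, and its torsion-free class is then forced to be $(\Gen_{\Lambda/I}C)^{\perp_{\Lambda/I}}=C^{\perp_{\Lambda/I}}$.

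\textbf{Expected obstacle.} The only delicate point is the identification ${}^{\perp}\W={}^{\perp}\J_{\Lambda/I}(C)$, together with correctly chaining Lemma~\ref{lem:CZinitial}(b),(e) to get $\Gen Z={}^{\perp}\W$. Everything else is formal bookkeeping with \eqref{eq-c}.
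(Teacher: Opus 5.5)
Your proposal is correct, and your proof of the first equality is the paper's: chain Lemma~\ref{lem:CZinitial}(b),(e) to get $\Gen Z={}^{\perp}\J_{\Lambda/I}(C)$, intersect with $\mods\Lambda/I$ using \eqref{eq-c} and Lemma~\ref{lem:known}(b), and use Lemma~\ref{lem:Gen-preserved} to identify the result with $F(\Gen Z)$.

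For the second equality, the paper uses exactly your ``alternative'': the torsion pair of Proposition~\ref{prop:dirrt-facts}(a), whose torsion class is $\Gen_{\Lambda/I}C$. Your main elementwise argument is a slightly different route, and it is equally valid. It uses the fact that any map $Z\to X$ into a $\Lambda/I$-module factors through $F(Z)\in\Gen_{\Lambda/I}C$. This route is more self-contained, since it needs only the first equality and the adjunction, not the cited torsion-pair result.
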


		\begin{proof}
			We have $\J(Z) = \Filt \J_{\Lambda/I}(C)$ by \cref{lem:CZinitial}(b), and hence
			
			$${^\perp{\J(Z)}}= {^\perp{\J_{\Lambda/I}}(C)}.$$
			We intersect with $\mods \Lambda/I$ and obtain
			\begin{equation}\label{eq-a} 
				^\perp{\J(Z)} \cap \mods \Lambda/I = {^\perp{\J_{\Lambda/I}(C)}} \cap \mods \Lambda/I = 
				{^{\perp_{\Lambda/I}}\J_{\Lambda/I}(C)} = \Gen_{\Lambda/I}C,
			\end{equation}
			where the last equation is by \cref{lem:known}(b).
			
			Furthermore, by \cref{lem:CZinitial}(e), we have $\Gen Z = {^\perp{\J(Z)}}$, which by \cref{lem:Gen-preserved} implies 
			\begin{equation}\label{eq-b}
				F(\Gen Z) = F({^\perp{\J(Z)}}) = {^\perp{\J(Z)}} \cap \mods \Lambda/I.
			\end{equation}
			
			The first statement now follows from combining \cref{eq-a} and \cref{eq-b}. 
			Since $\Gen_{\Lambda/I} C = F(\Gen Z) = \Gen Z \cap \mods \Lambda/I$ by \cref{lem:Gen-preserved}, we then have that the 
			second statement follows from \cref{prop:dirrt-facts}(a).
		\end{proof}
		
		\begin{lemma}\label{lem:perptau}
			If $X$ is an indecomposable non-projective $\tau$-rigid $\Lambda$-module, then ${^\perp{(\tau X)}}= \FiltGen (\J(X))$.
		\end{lemma}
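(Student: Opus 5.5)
We will show that both sides are torsion classes with the same associated torsion-free class, that is, we compare ${}^\perp(\tau X)$ and $\FiltGen(\J(X))$ through their perpendicular categories.

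The first step is to note that ${}^\perp(\tau X)$ is a torsion class in $\mods\Lambda$. Since $\Lambda$ is $\tau$-tilting finite, it is functorially finite, and by the Bongartz completion theory of \cite{air} we have
$$
{}^\perp(\tau X)=\Gen B,
$$
where $B$ is the Bongartz completion of $X$. Moreover $X\in\P({}^\perp\tau X)$ and ${}^\perp(\tau X)\supseteq \Gen X$. By Jasso's reduction \cite{jasso},
$$
\J(X)=X^\perp\cap{}^\perp(\tau X),
$$
and the interval of torsion classes $[\Gen X,{}^\perp(\tau X)]$ is in bijection with the torsion classes of $\J(X)$.

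The inclusion $\FiltGen(\J(X))\subseteq{}^\perp(\tau X)$ is immediate: we have $\J(X)\subseteq{}^\perp(\tau X)$, and the right-hand side is a torsion class, so it contains the smallest torsion class containing $\J(X)$.

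For the reverse inclusion, we use that in the interval $[\Gen X,{}^\perp\tau X]$ the maximal element ${}^\perp\tau X$ corresponds to the whole category $\J(X)$. Explicitly, \cite[Thm. 3.8]{jasso} gives ${}^\perp(\tau X)\cap X^\perp=\J(X)$, so every $M\in{}^\perp(\tau X)$ sits in a canonical sequence
$$
0\to t_X M\to M\to f_X M\to 0
$$
with $t_XM\in\Gen X$ and $f_XM\in X^\perp\cap{}^\perp(\tau X)=\J(X)$. Here $f_XM\in{}^\perp\tau X$ because ${}^\perp\tau X$ is closed under quotients. Hence $f_XM\in\FiltGen(\J(X))$, and it remains to show that $\Gen X\subseteq\FiltGen(\J(X))$, i.e. that $X\in\FiltGen(\J(X))$. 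This is where non-projectivity of $X$ must enter, and it is the main obstacle: for $X$ projective the claim fails, since ${}^\perp\tau X$ is then everything while $\FiltGen(\J(X))$ misses $\ttop X$.

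To prove $X\in\FiltGen(\J(X))$, I would use the gen-minimal/split-projective description from \cite{bhm}: ${}^\perp\J(X)$ is a torsion class whose split projective part is $X$, and the Ext-projectives of ${}^\perp(\tau X)$ are $\add B$. I would argue via brick labelling, i.e. the minimal inclusions of torsion classes. Suppose $X\notin\FiltGen(\J(X))$. Then $\U:=\FiltGen(\J(X))$ lies in $[0,{}^\perp\tau X]$, and $\U\vee\Gen X={}^\perp\tau X$ by the sequence above. The torsion-free class $\U^\perp$ then contains $\FiltCogen$ of the brick labels of the arrows out of $\U$ towards ${}^\perp\tau X$. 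Since $X$ is non-projective, $\tau X\neq0$, and there is a mutation of $B$ at $X$ giving a torsion class strictly below ${}^\perp\tau X$ that contains $\J(X)$. Concretely, the non-split-projective part $\Pmodns({}^\perp\tau X)$ contains $X$, because $X\in\Gen(B/X)$ exactly when $X$ is non-projective; this is the Bongartz-completion fact from \cite{air}, which I expect to be the key input. Granting this, $X\in\Gen(B/X)$, and $B/X$ can be taken inside the torsion class generated by $\J(X)$. For the latter step, $\Gen(B/X)$ is the maximal torsion class in the interval not containing $X$ as split projective. By \cite[Thm. 2.14]{bhm}, $\Gen(\Pmods({}^\perp\tau X))={}^\perp\J(\Pmods)$, and comparing ranks as in \cref{lem:CZinitial}(b) should identify this torsion class with $\FiltGen(\J(X))$. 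Therefore $X\in\FiltGen(\J(X))$, which gives equality.
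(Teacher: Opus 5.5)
There is a genuine gap. Your first steps are sound. The inclusion $\FiltGen(\J(X))\subseteq{}^\perp\tau X$ is immediate. The canonical sequence $0\to t_XM\to M\to f_XM\to 0$ correctly reduces the reverse inclusion to showing $X\in\FiltGen(\J(X))$. You also correctly identify where non-projectivity enters: $X$ is not split projective in ${}^\perp\tau X=\Gen B$, i.e.\ $X\in\Gen(B/X)$. The paper uses this as $\Pmodns({}^\perp\tau X)=X$, citing \cite[Lem.~1.7]{bhm}.

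The gap is everything after that. Once $X\in\Gen(B/X)$, you have $\Gen(B/X)=\Gen B={}^\perp\tau X$. So the claims that ``$B/X$ can be taken inside'' $\FiltGen(\J(X))$, and that ``comparing ranks'' identifies $\Gen(B/X)$ with $\FiltGen(\J(X))$, are restatements of the lemma; nothing is proved there. Rank counting cannot supply this. \cref{lem:CZinitial}(b) works because $\FiltCogen$ is injective on wide subcategories. To use the analogous injectivity of $\FiltGen$, you would already need to know that ${}^\perp\tau X$ is $\FiltGen$ of some wide subcategory, and that this subcategory is $\J(X)$. That is exactly what the paper imports: the bijection of \cite[Lem.~4.3]{bh}, $\T\mapsto\J(\Pmodns(\T))\cap P^\perp$ with inverse $\W\mapsto\FiltGen(\W)$, applied with $\Pmodns({}^\perp\tau X)=X$ and $P=0$ (because ${}^\perp\tau X$ is sincere). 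Note also that mutating $B$ at $X$ does not give a torsion class below ${}^\perp\tau X$: since $X\in\Gen(B/X)$, that mutation goes up. The lower covers of ${}^\perp\tau X$ come from the other summands, and your brick-labelling paragraph never actually reaches a contradiction.

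Your own ingredients do suffice if assembled as follows. Suppose $\U=\FiltGen(\J(X))\subsetneq{}^\perp\tau X$. Since $\tors(\Lambda)$ is finite, $\U\subseteq\T'$ for some lower cover $\T'$ of ${}^\perp\tau X$. By \cite{air}, $\T'=\Gen(B/B_j)$ for a split projective summand $B_j$ of $B$. As $X$ is not split projective, $B_j\neq X$, so $\Gen X\subseteq\T'$. Choose $M\in{}^\perp\tau X\setminus\T'$. Then $f_{\T'}(M)$ is nonzero, lies in ${}^\perp\tau X$ (being a quotient of $M$), and lies in $(\T')^{\perp}\subseteq X^\perp$. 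Hence $f_{\T'}(M)\in\J(X)\subseteq\U\subseteq\T'$, contradicting $\T'\cap(\T')^{\perp}=0$. This proves ${}^\perp\tau X\subseteq\FiltGen(\J(X))$ directly, without your reduction to $X\in\FiltGen(\J(X))$. It is a more elementary alternative to the paper's appeal to the torsion class/wide subcategory bijection.
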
	
		
		\begin{proof}
			Using that $\Lambda$ is $\tau$-tilting finite, we have the following bijections of subcategories of $\mods \Lambda$ (\cite[Lem. 4.3]{bh})
			$$
			\begin{tikzcd}
				\{\text{torsion classes}\} \arrow[rrr, shift left=0.6ex, "\T \mapsto \J(\Pmodns(\T)) \cap P^{\perp}"] & & & \{\text{wide subcategories}\} \arrow[lll, shift left=0.6ex, "\FiltGen (\W) \mapsfrom \W"]
			\end{tikzcd}
			$$
			where $P$ is the maximal (basic) projective module satisfying $\Hom(P, \T)=0$. 
			
			The torsion class ${^\perp{\tau X}}$ is sincere \cite[Thm. 2.10]{air}. Moreover we have that $\Pmodns({^\perp{\tau X}}) =X$, from \cite[Lem. 1.7]{bhm}. Hence, the above bijection gives  $^\perp{\tau X} = \FiltGen (\J(X))$.   
		\end{proof}

		\begin{lemma}
			\label{lem:Fperptau}
			In Setup~\ref{setupCZ}, we have $F({}^{\perp}\tau Z)={}^{\perp_{\Lambda/I}}\tau_{\Lambda/I}C$.
		\end{lemma}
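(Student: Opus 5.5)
Split into cases according to whether $Z$ is projective, which by \cref{lem:CZinitial}(f) happens exactly when $C$ is projective in $\mods \Lambda/I$.

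\emph{Projective case.} Here $\tau Z = 0$ and $\tau_{\Lambda/I} C = 0$. Hence ${}^{\perp}\tau Z = \mods \Lambda$ and ${}^{\perp_{\Lambda/I}}\tau_{\Lambda/I}C = \mods \Lambda/I$. Since $F$ is essentially surjective, because $F(M)=M$ for every $M\in\mods \Lambda/I$, the equality holds.

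\emph{Non-projective case.} Both $Z$ and $C$ are now indecomposable, non-projective and $\tau$-rigid, $Z$ by \cref{lem:CZinitial}(a),(c). Applying \cref{lem:perptau} over $\Lambda$ and over $\Lambda/I$ gives
$${}^{\perp}\tau Z = \FiltGen(\J(Z)), \qquad {}^{\perp_{\Lambda/I}}\tau_{\Lambda/I}C = \FiltGen_{\Lambda/I}(\J_{\Lambda/I}(C)).$$
The second application is allowed because $\Lambda/I$ is $\tau$-tilting finite by \cite[Thm. 5.12]{dirrt}.

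By \cref{lem:CZinitial}(b), $\J(Z) = \Filt \J_{\Lambda/I}(C)$. The smallest torsion class containing $\Filt \X$ equals the smallest one containing $\X$, since torsion classes are closed under extensions. So
$$\FiltGen(\J(Z)) = \FiltGen(\J_{\Lambda/I}(C)).$$

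Now apply $F$. By \cref{lem:Gen-preserved}(b), $F$ of a torsion class in $\mods \Lambda$ is its intersection with $\mods\Lambda/I$. Hence
$$F({}^{\perp}\tau Z) = \FiltGen(\J_{\Lambda/I}(C)) \cap \mods \Lambda/I.$$
By \cref{lem:TJC}, the right-hand side equals $\FiltGen_{\Lambda/I}(\J_{\Lambda/I}(C)) = {}^{\perp_{\Lambda/I}}\tau_{\Lambda/I}C$, as required.

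The main point needing care is the dichotomy. We need $Z$ non-projective exactly when $C$ is non-projective, so that \cref{lem:perptau} can be applied on both sides at once; this is precisely \cref{lem:CZinitial}(f). The other detail is the identity $\FiltGen(\Filt\X)=\FiltGen(\X)$, which is immediate.
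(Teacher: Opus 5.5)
Your proposal is correct and follows the paper's proof essentially step for step. The case split via Lemma~\ref{lem:CZinitial}(f) is the same, and so is the chain in the non-projective case: Lemma~\ref{lem:perptau} on both sides, Lemma~\ref{lem:CZinitial}(b), $\FiltGen(\Filt\X)=\FiltGen(\X)$, Lemma~\ref{lem:Gen-preserved}(b) and Lemma~\ref{lem:TJC}. Your explicit remarks that $\Lambda/I$ is $\tau$-tilting finite (needed to apply Lemma~\ref{lem:perptau} over $\Lambda/I$) and that $F$ is essentially surjective (needed in the projective case) spell out points the paper leaves implicit.
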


		\begin{proof}
			By \cref{lem:CZinitial}(f) we have that $Z$ is projective in $\mods \Lambda$ if and only if  	
			$C$ is projective in $\mods \Lambda/I$. In this case, we have ${}^{\perp}\tau Z = \mods \Lambda$ and 
			${}^{\perp_{\Lambda/I}}\tau_{\Lambda/I}C = \mods \Lambda/I$, so the result clearly holds.  
			
			Now assume that $Z$ is not projective.	
			By \cref{lem:perptau}, we have that  ${^\perp{\tau Z}}= \FiltGen (\J(Z))$ and also 
			${^{\perp_{\Lambda/I}}{(\tau_{\Lambda/I} C)}}= \FiltGen_{\Lambda/I} (\J_{\Lambda/I}(C))$.

			Hence, we have:
			\begin{align*}
				F({}^{\perp} \tau Z) &= F(\FiltGen (\J(Z))) \\
				&= F(\FiltGen( \Filt \J_{\Lambda/I}(C))) \text{\quad (by the definition of $\W=\J(Z)$)} \\
				&= F(\FiltGen (\J_{\Lambda/I}(C))) \text{\quad (since $\X^{\perp} = (\Filt \X)^{\perp}$ and hence 
					$\FiltGen (\X) = \FiltGen (\Filt \X)$ for any subcategory $\X$)} \\
				&= \FiltGen (\J_{\Lambda/I}(C))\cap \mods \Lambda/I \text{\quad (by 
					Lemma~\ref{lem:Gen-preserved}(b))} \\
				&= \FiltGen_{\Lambda/I} (\J_{\Lambda/I}(C)) \text{\quad (by Lemma~\ref{lem:TJC})} \\
				&= {}^{\perp_{\Lambda/I}}\tau_{\Lambda/I}C,
			\end{align*}
			which gives the result.
		\end{proof}	
		
		The final result of this section gives a crucial property for our main results, proved in the next section. 
		
		\begin{proposition}
			\label{prop:FZaddC}
			In Setup~\ref{setupCZ}, we have 
			\begin{itemize}
				\item[(a)]$\add F(\Pmod(\lperp{\tau Z})) = \add 
				\Pmod(^{\perp_{\Lambda/I}}{\tau_{\Lambda/I} C})$ and
				\item[(b)] $F(Z)=C^r$ for some $r\geq 1$.
			\end{itemize}
		\end{proposition}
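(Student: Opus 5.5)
For part (a) I would use the machinery of Section~\ref{sec:supp}. Set $\T = {}^{\perp}\tau Z$. Since $\Lambda$ is $\tau$-tilting finite, $\T$ is functorially finite, so $T = \Pmod(\T)$ is support $\tau$-tilting with $\Gen T = \T$ by \cite[Thm.~2.7]{air}. By Proposition~\ref{prop:Fsupporttau}, $F(T)$ is support $\tau$-tilting over $\Lambda/I$. Moreover
$$\Gen_{\Lambda/I} F(T) = F(\Gen T) = F({}^{\perp}\tau Z) = {}^{\perp_{\Lambda/I}}\tau_{\Lambda/I} C,$$
where the first equality is Lemma~\ref{lem:Gen-preserved}(a) and the last is Lemma~\ref{lem:Fperptau}. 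The Adachi--Iyama--Reiten bijection over $\Lambda/I$ then gives $\add F(T) = \add \Pmod({}^{\perp_{\Lambda/I}}\tau_{\Lambda/I} C)$, which is (a).

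For (b), first suppose $Z$ is projective. Then Lemma~\ref{lem:CZinitial}(f) gives $F(Z) = C$ directly.

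Now suppose $Z$ is non-projective. Since $Z$ is $\tau$-rigid and indecomposable, $Z \in \add \Pmod({}^{\perp}\tau Z)$, so by (a) every indecomposable summand of $F(Z)$ lies in $\add \Pmod({}^{\perp_{\Lambda/I}}\tau_{\Lambda/I} C)$. Also $F(Z) \neq 0$ by Nakayama's lemma, because $I \subseteq \rad\Lambda$. Finally, by Lemma~\ref{lem:FGenZ},
$$\Gen_{\Lambda/I} F(Z) = F(\Gen Z) = \Gen_{\Lambda/I} C.$$
So $F(Z)$ and $C$ are both $\tau_{\Lambda/I}$-rigid and generate the same torsion class $\T' = \Gen_{\Lambda/I} C$.

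It remains to pass from "same torsion class" to $F(Z) \in \add C$, using the gen-minimal correspondence. By Lemma~\ref{lem:known}(c), $C = \Pmods(\T')$. Any $\tau$-rigid module $M$ with $\Gen M = \T'$ lies in $\P(\T')$, so $M \in \add \Pmod(\T')$. Every split projective of $\T'$ must occur as a summand of such an $M$, but $M$ may also contain non-split Ext-projective summands. The main obstacle is therefore to exclude non-split projective summands of $\T'$ from $F(Z)$.

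To do this, I would use that $Z$ is split projective in $\Gen Z$ (Lemma~\ref{lem:CZinitial}(d)). Take any indecomposable summand $Y$ of $F(Z)$ and an epimorphism $X \to Y$ with $X \in \T' \subseteq \Gen Z$. Then lift the composite $Z \to F(Z) \to Y$ through $X \to Y$: since $Z$ is split projective in $\Gen Z$, maps from $Z$ factor through epimorphisms in $\Gen Z$, by \cite[Prop.~4.?]{air}-type arguments via the projective-cover description. Because the lift lands in a $\Lambda/I$-module, it factors through $F(Z)$. This shows $F(Z)$ is split projective in $\T'$, so $F(Z) \in \add \Pmods(\T') = \add C$.

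If that argument has a gap, I would instead fall back on the fact that $\T'$ has rank-one split part. Since $C$ is indecomposable, $|\Pmods(\T')| = 1$, and a rank or $g$-vector count of $F(Z)$ may then force its summands to be $C$. In either case, since $C$ is indecomposable, we conclude $F(Z) = C^r$ with $r \geq 1$.
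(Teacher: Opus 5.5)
Your part (a) is exactly the paper's argument. Part (b), however, has a genuine gap at the step you flag yourself. The claim that maps out of $Z$ lift along epimorphisms in $\Gen Z$, because $Z$ is split projective in $\Gen Z$, is false. Split projectivity only says that epimorphisms \emph{onto} $Z$ from objects of $\Gen Z$ split, and Ext-projectivity only gives lifting when the kernel lies in $\Gen Z$. Here is a counterexample over the Kronecker algebra. Let $Z=\tau^{-1}P$, where $P$ is the simple projective. Then $Z$ is an indecomposable $\tau$-rigid brick, so $Z=\Pmods(\Gen Z)$ and $\End Z=k$. But $\Hom(Z,S)$ is two-dimensional, where $S\in\Gen Z$ is the simple injective. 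A nonzero $\phi\colon Z\to S$ is an epimorphism in $\Gen Z$, and any $h\colon Z\to S$ not proportional to $\phi$ fails to factor through it. Nothing in your argument explains why the particular composite $Z\to F(Z)\to Y$ should lift. The fallback ``rank or $g$-vector count'' is not an argument. So you have not excluded non-split Ext-projective summands of $\Gen_{\Lambda/I}C$ from $F(Z)$.

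The missing idea is to intersect the two memberships you already have. First, by (a), $F(Z)\in\add\Pmod({}^{\perp_{\Lambda/I}}\tau_{\Lambda/I}C)$, the Bongartz completion of $C$. Second, $F(Z)$ is $\tau_{\Lambda/I}$-rigid (e.g.\ by \cite[Prop.~5.6(a)]{dirrt}) with $\Gen F(Z)=\Gen_{\Lambda/I}C$, so, as you note, $F(Z)\in\add\Pmod(\Gen_{\Lambda/I}C)$, the co-Bongartz completion. The paper obtains this second membership by rerunning the argument of (a) with $\Pmod(\Gen Z)$ in place of $\Pmod({}^{\perp}\tau Z)$. These two additive categories intersect in $\add C$ \cite[Lem.~1.7]{bhm}. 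Indeed, an indecomposable summand $Y\notin\add C$ of the Bongartz completion is split projective in ${}^{\perp_{\Lambda/I}}\tau_{\Lambda/I}C$. So $Y$ cannot lie in $\Gen_{\Lambda/I}C$, since an epimorphism from $\add C$ onto $Y$ would split. Every object of $\add\Pmod(\Gen_{\Lambda/I}C)$, by contrast, does lie in $\Gen_{\Lambda/I}C$. Together with $F(Z)\neq 0$ this gives $F(Z)=C^r$, and it also makes your separate treatment of the projective case unnecessary.
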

		\begin{proof}
			Note that $P({}^{\perp}\tau Z)$ is a support $\tau$-tilting module, by~\cite[Thm.\ 2.7]{air}. Hence, by Proposition~\ref{prop:Fsupporttau}, we have that $F(P({}^{\perp}\tau Z))$ is a support $\tau_{\Lambda/I}$-tilting module.
			We have, using Lemma~\ref{lem:Gen-preserved}, that
			\begin{align*}
				\Gen F(P({}^{\perp}\tau Z)) &= F(\Gen P({}^{\perp}\tau Z)) \\
				&=F({}^{\perp}\tau Z) \\
				&={}^{\perp_{\Lambda/I}}\tau_{\Lambda/I}C \text{\quad (by Lemma~\ref{lem:Fperptau})} \\
				&= \Gen P({}^{\perp_{\Lambda/I}}\tau_{\Lambda/I}C) \text{\quad (by~\cite[Thm. 2.7]{air})}.
			\end{align*}    
			Since $F(P({}^{\perp}\tau Z))$ and $P({}^{\perp_{\Lambda/I}}\tau_{\Lambda/I}C)$ are both support $\tau_{\Lambda/I}$-tilting in $\mods \Lambda/I$, we have $\add F(P({}^{\perp}\tau Z))=\add P({}^{\perp_{\Lambda/I}}\tau_{\Lambda/I}C)$ by~\cite[Thm.\ 2.7]{air}. This establishes part (a).
			
			Similarly, using Lemma~\ref{lem:Gen-preserved} again, we have:
			\begin{align*}
				\Gen F(P(\Gen Z)) &=F(\Gen P(\Gen Z)) \\
				&= F(\Gen Z) \\
				&= \Gen_{\Lambda/I} C \text{\quad (by Lemma~\ref{lem:FGenZ})} \\
				&= \Gen P(\Gen_{\Lambda/I} C) \text{\quad (by~\cite[Thm.\ 2.7]{air})}.
			\end{align*}
			Since $F(P(\Gen Z))$ is a support $\tau$-tilting module (by~\cite{dirrt}) and $P(\Gen_{\Lambda/I} C)$ is also a support $\tau$-tilting module (by~\cite[Thm.\ 2.7]{air}), we have that $\add F(P(\Gen Z))=\add P(\Gen_{\Lambda/I} C)$.
			
			Note that $F(Z)\in F(P({}^{\perp}\tau Z)) \cap F(P(\Gen Z))$, so $F(Z)\in \add P({}^{\perp_{\Lambda/I}}\tau_{\Lambda/I}C)\cap \add P(\Gen_{\Lambda/I}C)$.
			So $F(Z)\in \add C$ by~\cite[Lem. 1.7]{bhm} (noting that $F(Z)$ lies in the Bongartz and co-Bongartz completions of $C$).
			
			Note then that $\Gen_{\Lambda/I}C=F(\Gen Z)=\Gen F(Z)$ by Lemma~\ref{lem:FGenZ} and Lemma~\ref{lem:Gen-preserved}(a).
			Since $C\not=0$, we must have $F(Z)\not=0$, so $F(Z)=C^r$ for some $r\geq 1$, giving part (b).
		\end{proof}
		
		\section{Reduction}\label{sec:rec}

		We first recall the following theorem of Jasso.
		
		\begin{theorem}[{\cite[Theorem~1.4]{jasso}}]\label{thm:jasso-module-category}
			Let $M$ be a $\tau$-rigid module in $\mods\Lambda$. Then
			$
			\J(M)\simeq \mods\Gamma
			$
			for some finite-dimensional algebra $\Gamma$.
		\end{theorem}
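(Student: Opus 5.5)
The plan is to follow Jasso's reduction argument. We reduce $M$ to a sincere $\tau$-rigid module over a factor algebra, complete it to a $\tau$-tilting module by Bongartz completion, and identify $\J(M)$ with the category of modules over a factor of the endomorphism algebra of that completion via Brenner--Butler tilting theory.

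The first step is the reduction to the sincere case. Let $e$ be the idempotent such that $\Lambda e$ is the maximal projective summand with $\Hom(\Lambda e, M)=0$, and put $\Lambda' = \Lambda/\langle e\rangle$. Then $M$ is a sincere $\tau_{\Lambda'}$-rigid $\Lambda'$-module. Every module in $M^{\perp}\cap{}^{\perp}\tau M$ is annihilated by $\langle e\rangle$, or can be handled by keeping track of the condition $\Hom(\Lambda e,-)=0$. So it suffices to treat the sincere case, keeping $\langle e\rangle$ as a bookkeeping device.

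Next, let $T = P({}^{\perp}\tau M)$ be the Bongartz completion of $M$; by \cite[Thm. 2.7]{air} this is a support $\tau$-tilting module with $\Gen T = {}^{\perp}\tau M$. In the sincere case it is in fact tilting over $\Lambda/\ann T$, by \cite[Prop. 1.1(e)]{air} as used in Lemma~\ref{lem:sequencegivessupport}. Brenner--Butler gives an equivalence $\Hom(T,-)\colon \Gen T \to \Sub(DT\otimes B)$ with $B=\End(T)$, and it is exact on short exact sequences inside $\Gen T$. Under this equivalence, $\J(M) = \Gen T\cap M^{\perp}$ corresponds to the $B$-modules $N$ in the image with $Ne_M$-component zero, where $e_M\in B$ is the idempotent corresponding to the summand $M$. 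Set $\Gamma = B/\langle e_M\rangle$. The candidate statement is that $\Hom(T,-)$ restricts to an equivalence $\J(M)\simeq \mods\Gamma$.

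The main obstacle is essential surjectivity: showing that every $\Gamma$-module lies in the image of $\Gen T$, i.e.\ in the torsion-free part $\Sub(DT\otimes B)$ of the tilted torsion pair. I would first show that the $\Gamma$-projectives $\Hom(T, f_M(T_i))$, for the non-$M$ summands $T_i$, come from objects of $\J(M)$, where $f_M$ is the torsion-free quotient for $\Gen M$. I would then use the fact that $\J(M)$ is closed under cokernels in $\mods\Lambda$ to get all $\Gamma$-modules as cokernels of maps between projectives. The key computation is that $\Ext^1(M,\J(M))=0$ follows from $\Hom(\J(M),\tau M)=0$ via the Auslander--Reiten formula. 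Combined with $\Hom(M,\J(M))=0$, this makes the functor $\Hom(T,-)$ on $\J(M)$ factor through $\Gamma$ and be exact, hence an equivalence onto $\mods\Gamma$. Fullness and faithfulness are inherited from the tilting equivalence. Finally, because $\Gamma$ is a factor of $\End(T)$, it is finite-dimensional.
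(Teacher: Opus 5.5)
Your outline has the same architecture as the proof of the cited result of Jasso (the paper gives no proof of its own): take the Bongartz completion $T=P({}^{\perp}\tau M)$, set $\Gamma=\End(T)/\langle e_M\rangle$, and use Brenner--Butler to get full faithfulness of $\Hom(T,-)$ on $\Gen T={}^{\perp}\tau M\supseteq\J(M)$. The genuine gap is in density.

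\textbf{The missing step in density.} You say you would show that the $\Gamma$-projectives $\Hom(T,f_M(T_i))$ ``come from objects of $\J(M)$''. That part is trivial, since $f_M(T_i)\in\J(M)$ by construction. What needs proof is that $\Hom(T,f_M(T_i))$ \emph{is} the indecomposable projective $\Gamma$-module at $T_i$. Since $\Ext^1(T,t_M(T_i))=0$, you have $\Hom(T,f_M(T_i))\cong\Hom(T,T_i)/\Hom(T,t_M(T_i))$. The projective $\Gamma$-module, by contrast, is $\Hom(T,T_i)$ modulo the maps factoring through $\add M$. So you must show that every map from $T$ into $t_M(T_i)\in\Gen M$ factors through $\add M$; this is where your ``main obstacle'' actually lives. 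None of the ingredients you list (the Auslander--Reiten formula, $\Hom(M,\J(M))=0$, closure under cokernels, full faithfulness) addresses it.

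The step needs the standard finer property of the Bongartz completion. There is a right exact sequence $\Lambda\xrightarrow{g}T^0\xrightarrow{p}T^1\to0$, with $g$ a left ${}^{\perp}\tau M$-approximation and $T^1\in\add M$, so every summand of $T$ outside $\add M$ is a summand of $T^0$. Now take $a\colon T^0\to Y$ with $Y\in\Gen M$, and a surjective right $\add M$-approximation $\pi\colon M'\to Y$. Projectivity of $\Lambda$ gives $ag=\pi b$, and the approximation property of $g$ gives $b=cg$. Then $(a-\pi c)g=0$, so $a-\pi c=dp$ factors through $T^1$. Hence $a=\pi c+dp$ factors through $\add M$, as required.

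\textbf{The reduction to the sincere case is false.} Objects of $\J(M)$ need not be annihilated by $\langle e\rangle$, nor satisfy $\Hom(\Lambda e,-)=0$. For $\Lambda=k(1\to2)$ and $M=S_1$ one has $e=e_2$, yet $P_1\in\J(S_1)$ and $e_2P_1\neq0$. Over $\Lambda/\langle e\rangle\cong k$ the perpendicular category of $S_1$ is $0$, whereas $\J(S_1)=\add P_1\simeq\mods k$. The step is also unnecessary and should be deleted: $P({}^{\perp}\tau M)$ is always $\tau$-tilting and tilting over $\Lambda/\ann T$, as in the proof of Lemma~\ref{lem:sequencegivessupport}.

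\textbf{Exactness needs closure under kernels.} Closure under cokernels alone does not make $\Hom(T,-)$ preserve the cokernel of a morphism $h\colon X_1\to X_0$ in $\J(M)$. You also need $\Hom(T,X_1)\to\Hom(T,\operatorname{im}h)$ to be onto, i.e.\ $\Ext^1(T,\ker h)=0$. This holds because $\J(M)$ is closed under kernels. Indeed, $\operatorname{im}h\in\J(M)$, and $\Hom(\ker h,\tau M)$ embeds into $\Ext^1(\operatorname{im}h,\tau M)\cong D\underline{\Hom}(\tau^{-1}\tau M,\operatorname{im}h)=0$. So $\ker h\in\J(M)\subseteq\Gen T$.

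With these three repairs, exactness together with fullness on projectives gives density as you intended.
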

		
		Throughout this section, let $Z$ and $C$ be as in Setup \ref{setupCZ}. 
		
		\subsection{Torsion-free classes and torsion-free reflectors}
		
		We recall some categorical notions. 
		
		\begin{definition}\label{def:pre-abelian}
			Let $\A$ be an additive category.
			\begin{enumerate}[(a)]
				\item The category $\A$ is \emph{pre-abelian} if every morphism
				in $\A$ has a kernel and a cokernel.
				
				\item A full subcategory $\F$ of a category $\A$ is
				\emph{reflective} if the inclusion
				$$
				\iota\colon \F\hookrightarrow\A
				$$
				admits a left adjoint. This left adjoint is called the
				\emph{reflector} onto $\F$.
				
				\item Let $\A$ and $\B$ be additive categories with
				cokernels. An additive functor
				$$
				G\colon\A\longrightarrow\B
				$$
				is \emph{right exact} if it preserves cokernels.
			\end{enumerate}
		\end{definition}
		
		Let $(\T,\F)$ be a torsion pair in an abelian category
		$\A$. For every $X\in\A$, we write
		$$
		0\longrightarrow t(X)\longrightarrow X
		\xrightarrow{\eta_X} f(X)\longrightarrow 0
		$$
		for its canonical torsion sequence, where $t(X)\in\T$ and
		$f(X)\in\F$.
		
		The following description of the categorical structure of $\F$
		is well-known; see \cite[Sec.~5.4]{bondal-vdb}. We include a proof for convenience.
		
		\begin{proposition}\label{prop:intrinsic-preabelian}
			Let $\A$ be an abelian category and let
			$(\T,\F)$ be a torsion pair in $\A$. Then the
			following hold.
			\begin{enumerate}[(a)]
				\item The functor
				$$
				f\colon\A\longrightarrow\F
				$$
				is left adjoint to the inclusion
				$\iota\colon\F\hookrightarrow\A$. In particular, we have that
				$\F$ is a reflective subcategory of $\A$ and $f$ is
				its reflector.
				
				\item The category $\F$ is pre-abelian. More precisely, let
				$a\colon X\to Y$ be a morphism in $\F$, and let
				$$
				K=\ker_{\A}(a),\qquad
				q\colon Y\longrightarrow N=\operatorname{coker}_{\A}(a).
				$$
				Then
				$$
				\ker_{\F}(a)=K,
				\qquad
				\operatorname{coker}_{\F}(a)=f(N),
				$$
				where the cokernel morphism in $\F$ is the composition
				$
				Y\xrightarrow{q}N\xrightarrow{\eta_N}f(N).
				$
			\end{enumerate}
		\end{proposition}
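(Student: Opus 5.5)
We prove (a) by checking the universal property of the reflection directly, and then deduce (b) from (a) together with the closure properties of torsion-free classes.

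For (a), it suffices to show that for every $X\in\A$ and $F'\in\F$, precomposition with $\eta_X$ induces a bijection
$$\Hom_{\A}(f(X),F')\longrightarrow \Hom_{\A}(X,F').$$
Injectivity holds because $\eta_X$ is an epimorphism. For surjectivity, take $h\colon X\to F'$. The restriction of $h$ to $t(X)$ is a map from an object of $\T$ to an object of $\F$, so it vanishes. Hence $h$ factors uniquely through the cokernel $\eta_X$ of $t(X)\to X$.

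Functoriality of $f$ on morphisms, and naturality of the bijection, come from the same argument. Given $u\colon X\to X'$, the composite $\eta_{X'}u$ kills $t(X)$, so it factors uniquely through $\eta_X$; this factorization defines $f(u)$. Thus $f$ is left adjoint to $\iota$, with unit $\eta$.

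For (b), let $a\colon X\to Y$ be a morphism in $\F$.

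\emph{Kernel.} $K=\ker_{\A}(a)$ is a subobject of $X\in\F$, so $K\in\F$ because torsion-free classes are closed under subobjects. Since $\F$ is a full subcategory, the universal property of $K$ in $\A$ restricts to $\F$. Hence $K$ is the kernel of $a$ in $\F$.

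\emph{Cokernel.} Let $c=\eta_N q\colon Y\to f(N)$. Then $ca=\eta_N qa=0$. Now let $b\colon Y\to F'$ with $F'\in\F$ and $ba=0$. Then $b$ factors uniquely as $b=b'q$ with $b'\colon N\to F'$. By (a), $b'$ factors uniquely as $b'=b''\eta_N$ with $b''\colon f(N)\to F'$, so $b=b''c$. Uniqueness of $b''$ follows because $c$ is an epimorphism, being a composite of two epimorphisms.

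Thus $f(N)$, together with $c$, is the cokernel of $a$ in $\F$. Since $\F$ is additive (it is closed under finite direct sums), every morphism in $\F$ has a kernel and a cokernel, so $\F$ is pre-abelian.

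The argument is entirely formal. The only point needing care is the cokernel step, where (a) is used to pass from $N$ to $f(N)$.
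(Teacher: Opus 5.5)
Your proof is correct and follows essentially the same route as the paper: in (a), every map $X\to F'$ with $F'\in\F$ vanishes on $t(X)$ because $\Hom(\T,\F)=0$, so it factors uniquely through $\eta_X$; in (b), kernels come from closure of $\F$ under subobjects, and cokernels from factoring first through $q$ and then, using (a), through $\eta_N$. The details you add that the paper leaves implicit — functoriality of $f$, uniqueness via $\eta_N q$ being an epimorphism, and additivity of $\F$ — are all fine.
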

		
		\begin{proof}
			Let $X\in\A$ and $C\in\F$. Since
			$\Hom_{\A}(\T,\F)=0$, every morphism
			$X\to C$ vanishes on $t(X)$ and therefore factors uniquely through
			the canonical epimorphism
			$\eta_X\colon X\longrightarrow f(X).$
			
			This gives a natural isomorphism
			$$
			\Hom_{\F}(f(X),C)
			\simeq
			\Hom_{\A}(X,C),
			$$
			proving (a).
			
			For (b), since a torsion-free class is closed under subobjects,
			$K=\ker_{\A}(a)$ belongs to $\F$. Hence the kernel
			of $a$ in $\A$ has the universal property of a kernel also
			in $\F$.
			
			Now let $F\in\F$ and let $h\colon Y\to C$ satisfy $ha=0$.
			Since $q$ is the cokernel of $a$ in $\A$, there is a unique
			morphism $\widetilde h\colon N\to C$ such that
			$
			h=\widetilde hq.
			$
			By part~(a), the map $\widetilde h$ factors uniquely through $\eta_N$, so
			there is a unique morphism $\overline h\colon f(N)\to C$ such that
			$h=\overline h\,\eta_Nq.$
			Thus $\eta_Nq$ is the cokernel of $a$ in $\F$, as claimed.
		\end{proof}
		
		We next record the right-exactness consequence which will be used
		below.
		
		\begin{proposition}\label{prop:torsionfree-reflector}
			Let $\A$ be an abelian category and let $(\T,\F)$
			be a torsion pair in $\A$, with reflector
			$$
			f\colon \A \longrightarrow \F.
			$$
			Equip $\F$ with the pre-abelian structure described in the
			previous proposition. Then the following hold.
			\begin{enumerate}[(a)]
				\item The functor
				$$
				f\colon \A \longrightarrow \F
				$$
				is right exact.
				
				\item Let $\B$ be an abelian category, let
				$$
				E\colon \B \longrightarrow \A
				$$
				be right exact, and let $\W\subseteq \F$ be a wide
				subcategory of $\A$. If $fE(X)\in\W$ for every
				$X\in\B$, then the composition
				$$
				fE\colon \B \longrightarrow \W
				$$
				is right exact.
			\end{enumerate}
		\end{proposition}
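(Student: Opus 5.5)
For part (a), I will show that $f$ preserves cokernels, where cokernels in $\F$ are computed as in Proposition~\ref{prop:intrinsic-preabelian}(b). Let $a\colon X\to Y$ be a morphism in $\A$ with cokernel $q\colon Y\to N$ in $\A$.

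The plan is to compare two objects. The first is $f(N)$, via $f(q)$. The second is the $\F$-cokernel of $f(a)\colon f(X)\to f(Y)$, namely $f(\operatorname{coker}_\A f(a))$, via the composite $f(Y)\to \operatorname{coker}_\A f(a)\to f(\operatorname{coker}_\A f(a))$.

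The slickest route is formal: $f$ is a left adjoint by Proposition~\ref{prop:intrinsic-preabelian}(a), and left adjoints preserve all colimits that exist. To check the universal property, take $C\in\F$. Morphisms $f(Y)\to C$ killing $f(a)$ correspond, by adjunction and naturality of $\eta$, to morphisms $Y\to C$ killing $a$. These in turn correspond to morphisms $N\to C$, and hence to morphisms $f(N)\to C$. All bijections are natural, so $f(N)$ together with $f(q)$ represents the cokernel of $f(a)$ in $\F$. The one point to verify is that $f(q)$ is really the map induced through $\eta_N q = f(q)\eta_Y$, which is just naturality of $\eta$. I would write this out directly rather than cite abstract results.

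For part (b), let $b\colon X\to Y$ in $\B$ have cokernel $c\colon Y\to N$. Since $E$ is right exact, $E(c)$ is the cokernel of $E(b)$ in $\A$. By part (a), $fE(c)$ is then the cokernel of $fE(b)$ in $\F$.

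The main obstacle is that the relevant cokernel is the one in $\W$, which is abelian and whose cokernels agree with those in $\A$. So I must show that the $\F$-cokernel of a morphism $u\colon U\to V$ in $\W$ coincides with its $\W$-cokernel. Since $\W$ is wide, $M=\operatorname{coker}_\A(u)$ lies in $\W\subseteq\F$. Hence $t(M)=0$, $\eta_M$ is an isomorphism, and $f(M)\cong M$, so the two cokernels agree.

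Applying this with $u=fE(b)$, which lies in $\W$ by hypothesis, shows that $fE(c)$ is the cokernel of $fE(b)$ in $\W$. Therefore $fE\colon\B\to\W$ is right exact.
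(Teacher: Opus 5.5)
Your proof is correct and follows the paper's route: in (a) you use that $f$ is left adjoint to the inclusion, and your explicit check of the universal property just unpacks this; in (b) you pass through the $\F$-cokernel of $fE(b)$ supplied by (a). The only variation is the last step of (b), where the paper observes that an $\F$-cokernel whose objects all lie in the full subcategory $\W$ automatically has the universal property in $\W$, so wideness is not needed there; you instead use wideness to identify the $\F$-cokernel with the $\A$-cokernel (since $\eta_M$ is an isomorphism), which additionally shows that $fE(c)$ is a cokernel in $\A$.
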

		
		\begin{proof}
			By the Proposition \ref{prop:intrinsic-preabelian}(a), the functor
			$$
			f\colon \A \longrightarrow \F
			$$
			is left adjoint to the inclusion $\F\hookrightarrow\A$.
			Hence $f$ preserves colimits, and in particular cokernels. This proves
			(a).
			
			For (b), let $q\colon Y\to N$ be the cokernel of a morphism
			$a\colon X\to Y$ in $\B$. Since $E$ is right exact,
			$E(q)$ is the cokernel of $E(a)$ in $\A$. By part~(a),
			$$
			fE(q)\colon fE(Y)\longrightarrow fE(N)
			$$
			is the cokernel of $fE(a)$ in $\F$. Since all the objects
			involved belong to the full subcategory $\W$, the same
			morphism has the universal property of a cokernel in $\W$.
			Thus
			$$
			fE\colon \B \longrightarrow \W
			$$
			is right exact.
		\end{proof}
		
		\begin{remark}
			A pre-abelian category $\A$ is \emph{quasi-abelian} if kernels
			are stable under pushout and cokernels are stable under pullback.
			In fact, the pre-abelian category $\F$ above is quasi-abelian
			(called \emph{almost abelian} in \cite{rump}); this follows from
			\cite[Theorem~2]{rump}. Moreover, the exact structure on $\F$
			induced from the abelian category $\A$ is intrinsic: its exact
			sequences are precisely the short exact sequences in $\A$
			whose terms belong to $\F$. See
			\cite[Sec.~5.4]{bondal-vdb}, where the induced exact
			structure on either member of a torsion pair is described intrinsically
			in terms of its kernels and cokernels. We will not need these stronger
			properties below.
		\end{remark}
		
		\subsection{The induced functor between the perpendicular categories}
		
		We now apply \cref{prop:torsionfree-reflector} to our setting.
		
		\begin{proposition}\label{prop:FZ-defined}
			The composite $f_CF$ restricts to a well-defined right exact functor
			$$
			F_Z:=f_CF|_{\J(Z)}:
			\J(Z)\longrightarrow\J_{\Lambda/I}(C).
			$$
			Moreover, for every $X\in{}^{\perp}\tau Z$, the canonical
			epimorphism $p_X:X\to f_Z(X)$ induces a natural isomorphism
			$$
			\rho_X:f_CF(X)\xrightarrow{\ \sim\ }F_Zf_Z(X).
			$$
			Consequently, the diagram
			\begin{equation}\label{dia:comm}
				\begin{tikzcd}[column sep=large,row sep=large]
					{}^{\perp}\tau Z
					\arrow[r,"f_Z"]
					\arrow[d,"F"']
					&
					\J(Z)
					\arrow[d,"F_Z"]
					\\
					{}^{\perp_{\Lambda/I}}\tau_{\Lambda/I}C
					\arrow[r,"f_C"']
					&
					\J_{\Lambda/I}(C)
				\end{tikzcd}
			\end{equation}
			commutes up to natural isomorphism.
		\end{proposition}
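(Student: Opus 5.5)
The plan is to check, in order, that $F_Z$ is well defined, that it is right exact, and that $\rho_X$ exists and is natural. Throughout, $f_Z$ and $f_C$ denote the torsion-free reflectors for the torsion pairs $(\Gen Z, Z^{\perp})$ in $\mods\Lambda$ and $(\Gen_{\Lambda/I}C, C^{\perp_{\Lambda/I}})$ in $\mods \Lambda/I$.

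\emph{Well-definedness.} I first show that $f_C$ sends ${}^{\perp_{\Lambda/I}}\tau_{\Lambda/I}C$ into $\J_{\Lambda/I}(C)$. If $Y\in {}^{\perp_{\Lambda/I}}\tau_{\Lambda/I}C$, then $f_C(Y)$ is a quotient of $Y$. It therefore lies in ${}^{\perp_{\Lambda/I}}\tau_{\Lambda/I}C$, since this is a torsion class. It also lies in $C^{\perp_{\Lambda/I}}$ by construction, so it lies in $\J_{\Lambda/I}(C)$. The same argument shows that $f_Z$ maps ${}^{\perp}\tau Z$ into $\J(Z)$. Now take $X\in\J(Z)\subseteq{}^{\perp}\tau Z$. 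By \cref{lem:Fperptau}, $F(X)\in F({}^{\perp}\tau Z)={}^{\perp_{\Lambda/I}}\tau_{\Lambda/I}C$, and hence $f_CF(X)\in \J_{\Lambda/I}(C)$.

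\emph{Right exactness.} I apply \cref{prop:torsionfree-reflector}(b) with $\A=\mods\Lambda/I$, $\B=\J(Z)$, $E=F|_{\J(Z)}$ and $\W=\J_{\Lambda/I}(C)\subseteq C^{\perp_{\Lambda/I}}$.
\begin{itemize}
\item[(i)] $\B$ is abelian, because $\J(Z)$ is wide.
\item[(ii)] $E$ is right exact, because cokernels in $\J(Z)$ are computed in $\mods\Lambda$ and $F$ is right exact.
\item[(iii)] $\J_{\Lambda/I}(C)$ is a wide subcategory of $\mods\Lambda/I$.
\end{itemize}
The hypothesis $f_CE(X)\in\W$ for all $X\in\B$ is exactly what was shown in the previous step, so $F_Z$ is right exact.

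\emph{The isomorphism $\rho_X$.} Let $X\in{}^{\perp}\tau Z$ and take its canonical sequence
\[0\to t_Z(X)\to X\xrightarrow{p_X} f_Z(X)\to 0.\]
Applying $F$ gives the right exact sequence
\[F(t_Z X)\to F(X)\xrightarrow{F(p_X)} F f_Z(X)\to 0.\]
Since $t_Z X\in\Gen Z$, \cref{lem:FGenZ} gives $F(t_ZX)\in F(\Gen Z)=\Gen_{\Lambda/I}C$, and so $f_CF(t_ZX)=0$.

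By \cref{prop:torsionfree-reflector}(a), $f_C$ preserves cokernels, where cokernels in $C^{\perp_{\Lambda/I}}$ are taken in the sense of \cref{prop:intrinsic-preabelian}. Hence $f_CF(p_X)$ is the cokernel in $C^{\perp_{\Lambda/I}}$ of the zero map out of $f_CF(t_ZX)=0$. A cokernel of a zero map from the zero object is an isomorphism, so I set $\rho_X:=f_CF(p_X)$. This is an isomorphism $f_CF(X)\to f_CFf_Z(X)=F_Zf_Z(X)$, using $f_Z(X)\in\J(Z)$.

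Naturality in $X$ follows because $p$ is a natural transformation $\mathrm{id}\Rightarrow f_Z$ and $f_CF$ is a functor. The commutativity of \eqref{dia:comm} up to natural isomorphism is then just a restatement of $\rho$.

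The main point needing care is the claim that "$f_C$ kills $F(t_ZX)$, hence $f_CF(p_X)$ is an isomorphism". This requires checking that right exactness of $f_C$ is used in the correct pre-abelian sense, namely that $f_C$ sends the $\A$-cokernel $F(p_X)$ to the $\F$-cokernel of $f_CF(t_ZX)\to f_CF(X)$. The key input here is \cref{lem:FGenZ}.
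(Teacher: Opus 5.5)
Your proposal is correct and follows the paper's proof essentially step for step. Well-definedness comes from \cref{lem:Fperptau} together with closure of ${}^{\perp_{\Lambda/I}}\tau_{\Lambda/I}C$ under quotients, right exactness from \cref{prop:torsionfree-reflector}(b), and $\rho_X=f_CF(p_X)$ is an isomorphism because $f_CF$ preserves cokernels while $f_CF(t_ZX)=0$ by \cref{lem:FGenZ}; for that last cokernel step you invoke part (a) of \cref{prop:torsionfree-reflector} together with right exactness of $F$, which is slightly more accurate than the paper's appeal to part (b), since $C^{\perp_{\Lambda/I}}$ itself need not be wide.
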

		
		\begin{proof}
			Let $X\in\J(Z)$. Since
			$
			\J(Z)\subseteq{}^{\perp}\tau Z,
			$
			Lemma \ref{lem:Fperptau} gives
			$
			F(X)\in{}^{\perp_{\Lambda/I}}\tau_{\Lambda/I}C.
			$
			Since ${}^{\perp_{\Lambda/I}}\tau_{\Lambda/I}C$ is a torsion
			class and therefore closed under factor modules, we have 
			$
			f_C(F(X))\in{}^{\perp_{\Lambda/I}}\tau_{\Lambda/I}C.
			$
			By definition,
			$
			f_C(F(X))\in C^{\perp_{\Lambda/I}}.
			$
			It follows that
			$$
			f_C(F(X))\in
			C^{\perp_{\Lambda/I}}
			\cap{}^{\perp_{\Lambda/I}}\tau_{\Lambda/I}C
			=
			\J_{\Lambda/I}(C),
			$$
			so $F_Z$ is well defined.
			
			Since $\J(Z)$ is a wide subcategory of $\mods\Lambda$, the cokernels of morphisms in $\J(Z)$ are computed in $\mods\Lambda$. Since $F$ is
			right exact, its restriction
			$
			F|_{\J(Z)}:
			\J(Z)\longrightarrow\mods\Lambda/I
			$
			is right exact. The category $\J_{\Lambda/I}(C)$ is a wide subcategory of
			$\mods\Lambda/I$ contained in $C^{\perp_{\Lambda/I}}$. Hence
			Proposition~\ref{prop:torsionfree-reflector}(c), applied to the torsion pair
			$(\Gen_{\Lambda/I}C,C^{\perp_{\Lambda/I}})$, shows that $F_Z$ is right
			exact.
			
			Now let $X\in{}^{\perp}\tau Z$ and consider the canonical
			exact sequence 
			$$
			0\longrightarrow t_Z(X)\xrightarrow{i_X} X
			\xrightarrow{p_X}f_Z(X)\longrightarrow0
			$$
			with respect to the torsion pair
			$(\Gen Z,Z^{\perp})$.
			Since ${}^{\perp}\tau Z$ is a torsion class, it is closed
			under factor modules. Hence
			$
			f_Z(X)\in
			Z^{\perp}\cap{}^{\perp}\tau Z
			=
			\J(Z).
			$
			By Lemma \ref{lem:FGenZ}, we have
			$F(\Gen Z)=\Gen_{\Lambda/I}C.$
			Since $t_Z(X)\in\Gen Z$, this gives
			$F(t_Z(X))\in\Gen_{\Lambda/I}C,$
			and therefore
			$f_CF(t_Z(X))=0.$
			
			By the right exactness of $F$ and \cref{prop:torsionfree-reflector}(b), the composite
			$$
			f_C F : \mods\Lambda
			\longrightarrow C^{\perp_{\Lambda/I}}
			$$
			preserves cokernels. Hence $f_C F(p_X)$ is a cokernel of
			$f_C F(i_X)$. Since $f_C F(t_Z(X))=0$, the morphism
			$f_C F(i_X)$ is the zero morphism
			$0 \longrightarrow f_C F(X).$
			The identity morphism of $f_C F(X)$ is also a cokernel of this zero
			morphism. It follows from the universal property of cokernels that
			$$
			\rho_X := f_C F(p_X)\colon f_C F(X)
			\longrightarrow f_C F(f_Z(X))
			$$
			is an isomorphism.

			Since $f_Z(X)\in \J(Z)$, we have by definition
			$$
			f_CF(f_Z(X))=F_Zf_Z(X).
			$$
			Moreover, the canonical torsion sequences are functorial in $X$.
			Therefore, the morphisms $\rho_X$ form a natural isomorphism
			$$
			\rho\colon
			\left.f_CF\right|_{{}^{\perp}\tau Z}
			\xrightarrow{\sim}
			F_Zf_Z.
			$$
			
			This is precisely the natural isomorphism expressing the commutativity of
			the diagram (\ref{dia:comm}).
		\end{proof}
		
		We next consider the following property of the torsion-free functor.
		
		\begin{lemma}\label{lem:fU-projectives}
			Let $U$ be a $\tau$-rigid $\Lambda$-module. The torsion-free functor $f_U$
			induces a full and dense functor
			$$
			f_U:\P({}^{\perp}\tau U)
			\longrightarrow
			\P(\J(U)).
			$$
			Its kernel consists precisely of the morphisms which factor through an
			object of $\Gen U$.
		\end{lemma}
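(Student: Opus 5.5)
Here $f_U$ is the torsion-free functor for the torsion pair $(\Gen U, U^{\perp})$, and $\P(\J(U))$ is the category of projectives of $\J(U)\simeq\mods\Gamma$ (Theorem~\ref{thm:jasso-module-category}). The plan is to follow Jasso's reduction \cite{jasso}, where $\P(\J(U))$ is realised as $\add f_U(B_U)$ with $B_U=\Pmod({}^{\perp}\tau U)$ the Bongartz completion. The argument splits into four steps: $f_U$ lands in $\P(\J(U))$, density, fullness, and identification of the kernel.

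\emph{Well-definedness and density.} Let $X\in\P({}^{\perp}\tau U)$ and take the canonical sequence $0\to t_U(X)\to X\to f_U(X)\to 0$. Since ${}^{\perp}\tau U$ is closed under quotients, $f_U(X)\in U^{\perp}\cap{}^{\perp}\tau U=\J(U)$, as in the proof of Proposition~\ref{prop:FZ-defined}. To see that $f_U(X)$ is projective in $\J(U)$, take $Y\in\J(U)$ and apply $\Hom(-,Y)$ to the canonical sequence. This gives
\[\Hom(t_U(X),Y)\to\Ext^1(f_U(X),Y)\to\Ext^1(X,Y).\]
The left term vanishes because $t_U(X)\in\Gen U$ and $Y\in U^{\perp}$. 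The right term vanishes because $X$ is Ext-projective in ${}^{\perp}\tau U\ni Y$. Hence $\Ext^1(f_U(X),Y)=0$. Since $\J(U)$ is wide, this gives projectivity in $\J(U)$. For density I will cite \cite[Thm.~3.8/Prop.~3.6]{jasso}: $f_U(B_U)$ is a projective generator of $\J(U)$, and so every projective of $\J(U)$ lies in $\add f_U(B_U)$. Since $f_U$ is additive and $B_U\in\P({}^{\perp}\tau U)$, density follows.

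\emph{Fullness.} This is the step I expect to be the main obstacle. Let $X,X'\in\P({}^{\perp}\tau U)$ and $h\colon f_U(X)\to f_U(X')$. Consider $h p_X\colon X\to f_U(X')$. I want to lift it along the epimorphism $p_{X'}\colon X'\to f_U(X')$, whose kernel is $t_U(X')\in\Gen U\subseteq{}^{\perp}\tau U$. Applying $\Hom(X,-)$, the obstruction lies in $\Ext^1(X,t_U(X'))$, which is zero since $X$ is Ext-projective in ${}^{\perp}\tau U$ and $t_U(X')$ lies there. So there is $g\colon X\to X'$ with $p_{X'}g=hp_X$. By uniqueness in Proposition~\ref{prop:intrinsic-preabelian}(a), this means $f_U(g)=h$.

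\emph{Kernel.} If $g\colon X\to X'$ factors as $X\to V\to X'$ with $V\in\Gen U$, then $f_U(V)=0$, so $f_U(g)=0$. Conversely, suppose $f_U(g)=0$. Then $p_{X'}g=0$, so $g$ factors through $t_U(X')\in\Gen U$. This direction uses only the canonical sequence and no projectivity.
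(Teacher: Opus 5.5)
Your argument is essentially the paper's. The kernel is identified from the functoriality of the canonical torsion sequences exactly as there. Fullness is proved the same way, by lifting $h p_X$ along $p_{X'}$ using $\Ext^1_\Lambda(X,t_U(X'))=0$, which holds because $t_U(X')\in\Gen U\subseteq{}^{\perp}\tau U$. The one difference is that you verify directly that $f_U$ maps $\P({}^{\perp}\tau U)$ into $\P(\J(U))$. Your long exact sequence argument is fine, since in the wide subcategory $\J(U)$ epimorphisms are surjective with kernel in $\J(U)$. The paper instead obtains both this and density by citing Lemma~4.9 of Buan--Marsh.

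One inference needs tightening. From $\P(\J(U))=\add f_U(B_U)$ and additivity of $f_U$ you only get density up to direct summands, because a summand of $f_U(B_U^m)$ need not itself lie in the image of $f_U$. Your own fullness step repairs this, so prove fullness first:
\begin{itemize}
\item[(i)] Write $B_U=\amalg_i B_i$ with each $B_i$ indecomposable.
\item[(ii)] Fullness makes $\End_\Lambda(f_U(B_i))$ a quotient of the local ring $\End_\Lambda(B_i)$, so each $f_U(B_i)$ is zero or indecomposable.
\item[(iii)] Krull--Schmidt then shows that every indecomposable projective of $\J(U)$ is isomorphic to some $f_U(B_i)$.
\item[(iv)] Hence every object of $\P(\J(U))$ is isomorphic to $f_U$ of a direct sum of the $B_i$, which lies in $\P({}^{\perp}\tau U)$.
\end{itemize}
This is the same device the paper uses later in Proposition~\ref{prop:FZ-projectives} and Lemma~\ref{lem:projective-generator-criterion}.
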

		
		\begin{proof}
			We first identify the kernel of
			$$
			f_U:\mods\Lambda\longrightarrow U^{\perp}.
			$$
			Let $h:X\to X'$ be a morphism. Functoriality of the canonical torsion
			sequences gives a commutative diagram
			$$
			\begin{tikzcd}
				0 \arrow[r] & t_U(X) \arrow[r] \arrow[d] & X \arrow[r] \arrow[d,"h"]
				& f_U(X) \arrow[r] \arrow[d,"f_U(h)"] & 0\\
				0 \arrow[r] & t_U(X') \arrow[r] & X' \arrow[r]
				& f_U(X') \arrow[r] & 0.
			\end{tikzcd}
			$$
			If $f_U(h)=0$, then $h$ factors through $t_U(X')\in\Gen U$.
			Conversely, every morphism factoring through an object of
			$\Gen U$ is annihilated by $f_U$. Thus the kernel consists exactly
			of the morphisms which factor through $\Gen U$.
			
			By \cite[Lemma~4.9]{bm}, the restriction of $f_U$ induces a
			dense functor
			$
			\P({}^{\perp}\tau U)
			\longrightarrow
			\P(\J(U)).
			$
			It remains to prove fullness. Let $Q,Q'\in
			\P({}^{\perp}\tau U)$, and let
			$
			h':f_U(Q)\longrightarrow f_U(Q')
			$
			be a morphism. Consider the canonical exact sequence
			$$
			0\longrightarrow t_U(Q')\longrightarrow Q'
			\longrightarrow f_U(Q')\longrightarrow0.
			$$
			The category ${}^{\perp}\tau U$ is generated by the
			Bongartz completion $B_U$, and
			$
			t_U(Q')\in\Gen U\subseteq\Gen  B_U.
			$
			Since $Q$ is Ext-projective in ${}^{\perp}\tau U$, we have
			$\Ext^1_\Lambda(Q,t_U(Q'))=0.$
			Hence $h'$ lifts to a morphism $h:Q\to Q'$ satisfying
			$$
			f_U(h)=h'.
			$$
			Thus the restriction of $f_U$ is full.
		\end{proof}
		
		The following crucial property of the the induced functor $F_Z$ will be the key to the recursive step in our main result.
		
		\begin{proposition}\label{prop:FZ-projectives}
			The restriction
			$$
			F_Z|_{\P(\J(Z))}:
			\P(\J(Z))
			\longrightarrow
			\P(\J_{\Lambda/I}(C))
			$$
			is full and dense.
		\end{proposition}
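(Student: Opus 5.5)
We deduce the statement from the commutative square \eqref{dia:comm} in Proposition~\ref{prop:FZ-defined} together with Lemma~\ref{lem:fU-projectives} (applied both over $\Lambda$ and over $\Lambda/I$) and Proposition~\ref{prop:FZaddC}(a). The idea is to write $F_Z$ on projectives as the composite $f_C F$, precomposed with a section of $f_Z$.

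\emph{Well-definedness and density.} Let $Q\in\P(\J(Z))$. By Lemma~\ref{lem:fU-projectives} (with $U=Z$) there is $\widetilde Q\in\P({}^{\perp}\tau Z)$ with $f_Z(\widetilde Q)\simeq Q$. By Proposition~\ref{prop:FZ-defined}, $F_Z(Q)\simeq f_CF(\widetilde Q)$. Proposition~\ref{prop:FZaddC}(a) gives
\[
F(\widetilde Q)\in\add F(\Pmod({}^{\perp}\tau Z))=\P({}^{\perp_{\Lambda/I}}\tau_{\Lambda/I}C).
\]
Lemma~\ref{lem:fU-projectives} over $\Lambda/I$ (with $U=C$) then shows $f_CF(\widetilde Q)\in\P(\J_{\Lambda/I}(C))$, so the restriction lands where claimed.

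For density, take $R\in\P(\J_{\Lambda/I}(C))$. By Lemma~\ref{lem:fU-projectives} over $\Lambda/I$ we have $R\simeq f_C(R')$ with $R'\in\P({}^{\perp_{\Lambda/I}}\tau_{\Lambda/I}C)$. Proposition~\ref{prop:FZaddC}(a) says $R'$ is a summand of $F(\widetilde Q)$ for some $\widetilde Q\in\add\Pmod({}^{\perp}\tau Z)$. Hence $R$ is a summand of $F_Z(f_Z\widetilde Q)$. To get $R$ itself in the image, decompose $\widetilde Q$ into indecomposables. Since $I\subseteq\rad\Lambda$, $F$ sends indecomposables with local endomorphism rings to modules whose summands we can track; more simply, write $F(\Pmod({}^{\perp}\tau Z))$ as a direct sum and pick the summand of $\Pmod({}^{\perp}\tau Z)$ whose image contains $R'$. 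Here I expect density up to additive closure to be what is really needed. If strict density is required, fullness together with idempotent lifting (endomorphism rings are finite dimensional, and idempotents lift modulo the kernel, which lies in the radical) will split off $R$ as $F_Z$ of a summand.

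\emph{Fullness.} This is the main step. Let $Q_1,Q_2\in\P(\J(Z))$ with lifts $\widetilde Q_i\in\P({}^{\perp}\tau Z)$, and let $h:F_Z Q_1\to F_Z Q_2$. Transport $h$ via $\rho$ to a map $f_CF(\widetilde Q_1)\to f_CF(\widetilde Q_2)$. Since the $F(\widetilde Q_i)$ lie in $\P({}^{\perp_{\Lambda/I}}\tau_{\Lambda/I}C)$, fullness in Lemma~\ref{lem:fU-projectives} over $\Lambda/I$ lifts this to $h':F(\widetilde Q_1)\to F(\widetilde Q_2)$. Next we lift $h'$ along $F$. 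Since $F(\widetilde Q_2)=\widetilde Q_2/I\widetilde Q_2$, we have $\Hom_{\Lambda/I}(F\widetilde Q_1,F\widetilde Q_2)=\Hom_\Lambda(\widetilde Q_1,\widetilde Q_2/I\widetilde Q_2)$. The map $\widetilde Q_2\to\widetilde Q_2/I\widetilde Q_2$ is surjective with kernel $I\widetilde Q_2$, so it suffices that $\Ext^1_\Lambda(\widetilde Q_1,I\widetilde Q_2)=0$, or at least that the composite $\widetilde Q_1\to\widetilde Q_2/I\widetilde Q_2$ lifts. I would handle this by noting $I\widetilde Q_2$ need not lie in ${}^{\perp}\tau Z$. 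The fallback is to lift through a projective cover $P_1\to\widetilde Q_1$ (over $\Lambda$, where any map lifts to $\widetilde Q_2$) and then correct on the kernel using that $\widetilde Q_2\in\Gen B_Z$ and Ext-projectivity of $\widetilde Q_1$. Once a lift $g:\widetilde Q_1\to\widetilde Q_2$ with $F(g)=h'$ is found, set $k=f_Z(g)$. Naturality of $\rho$ gives $F_Z(k)=h$ up to the identifications, and fullness follows. This lifting of $h'$ along $F$ is the step I expect to be the obstacle. If the $\Ext$ argument fails, I would instead use that $f_Z$ kills maps factoring through $\Gen Z$, together with $F(\Gen Z)=\Gen_{\Lambda/I}C$ from Lemma~\ref{lem:FGenZ}, so that the lift only needs to be correct modulo $\Gen_{\Lambda/I}C$.
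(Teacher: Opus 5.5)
Your well-definedness argument is the paper's, and your density argument would go through once fullness is known, but the fullness step has a genuine gap.

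Lifting $h'$ along $F$ is impossible in general: $F$ is full on projective modules, but not on $\P({}^{\perp}\tau Z)$. Take Example~1 ($\Lambda$ the path algebra of $1\to 2\to 3$, $I=(\beta\alpha)$) with $C=S_2$. Then $Z=S_2$, $\tau Z=S_3$, and $\P({}^{\perp}\tau Z)=\add(P_1\amalg P_2\amalg S_2)$.
\begin{itemize}
\item Since $\mathrm{soc}\,P_1=S_3$, we have $\Hom_\Lambda(S_2,P_1)=0$.
\item But $F(P_1)=P_1/IP_1$ has socle $S_2$, so $\Hom_{\Lambda/I}(F(S_2),F(P_1))\neq 0$. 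No nonzero $h'$ between these objects has a lift.
\item The obstruction lies in $\Ext^1_\Lambda(S_2,IP_1)\neq 0$, where $IP_1=S_3=\tau Z\notin{}^{\perp}\tau Z$. So neither a projective-cover correction nor Ext-projectivity of $\widetilde Q_1$ in ${}^{\perp}\tau Z$ can repair it.
\end{itemize}
Your last fallback asks, for every $h'$, for a lift that is correct only modulo maps factoring through $\Gen_{\Lambda/I}C$. That is exactly fullness of $f_CF$ on $\P({}^{\perp}\tau Z)$. Via $\rho$ and the fullness and density of $f_Z$ on Ext-projectives, this is equivalent to the fullness you are trying to prove, so it restates the claim rather than proving it.

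The missing idea is to avoid lifting along $F$ and work inside $\J(Z)$. Since $\J_{\Lambda/I}(C)\subseteq\Filt\J_{\Lambda/I}(C)=\J(Z)$, every $Y\in\J(Z)$ has a natural epimorphism $u_Y\colon Y\to Y/IY\to f_C(Y/IY)=F_Z(Y)$ in the wide subcategory $\J(Z)$. Given $w\colon F_Z(P)\to F_Z(P')$ with $P,P'\in\P(\J(Z))$:
\begin{itemize}
\item projectivity of $P$ in $\J(Z)$ lets you lift $wu_P$ through $u_{P'}$ to some $v\colon P\to P'$;
\item naturality gives $F_Z(v)u_P=u_{P'}v=wu_P$;
\item hence $F_Z(v)=w$, because $u_P$ is an epimorphism.
\end{itemize}
The same epimorphisms show that $F_Z(P_i)$ is zero or indecomposable for each indecomposable projective $P_i$ of $\J(Z)$, since $P_i$ has simple top there. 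The paper then obtains density by comparing ranks.

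Your idempotent-lifting route to density is also valid once fullness is in hand. However, drop the appeal to the kernel lying in the radical. That fact is not yet available at this point, and it is unnecessary, since idempotents lift along any surjection of finite-dimensional algebras.
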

		
		\begin{proof}
			Let $Q\in\P({}^{\perp}\tau Z)$. By
			\cref{prop:FZaddC}(a),
			$$
			\add F\bigl(P({}^{\perp}\tau Z)\bigr)
			=
			\add P({}^{\perp_{\Lambda/I}}\tau_{\Lambda/I}C).
			$$
			Since $Q$ belongs to
			$\add P({}^{\perp}\tau Z)$, it follows that
			$$
			F(Q)\in
			\P({}^{\perp_{\Lambda/I}}\tau_{\Lambda/I}C).
			$$
			Lemma~\ref{lem:fU-projectives}, applied in $\mods\Lambda/I$ to $C$, gives
			$$
			f_CF(Q)\in\P(\J_{\Lambda/I}(C)).
			$$
			By Proposition~\ref{prop:FZ-defined}, there is a natural isomorphism
			$$
			f_CF(Q)\xrightarrow{\ \sim\ }F_Zf_Z(Q).
			$$
			Since Lemma~\ref{lem:fU-projectives}, applied to $Z$, shows that every
			projective object of $\J(Z)$ is isomorphic to $f_Z(Q)$ for some
			$Q\in\P({}^{\perp}\tau Z)$, the functor $F_Z$ sends
			projective objects of $\J(Z)$ to projective objects of
			$\J_{\Lambda/I}(C)$.
			
			We next prove fullness. Let $P=f_Z(Q)$, $P'=f_Z(Q')$
			with $Q,Q'\in\P({}^{\perp}\tau Z)$, and let
			$w:F_Z(P)\longrightarrow F_Z(P')$
			be a morphism.
			
			Note that, for any $X\in {}^{\perp}\tau Z$, we have
			$F_Zf_Z(X)=f_cFf_Z(X)$ by definition, so there is a canonical epimorphism 
			$u_X:f_Z(X)\longrightarrow F_Zf_Z(X)=f_cFf_Z(X)$ which is natural in 
			$X$.
			Composing this with the canonical epimorphism $p_X:X\rightarrow f_Z(X)$
			gives an epimorphism $u_Xp_X:X\rightarrow F_Zf_Z(X)$.
			Since
			$f_Z(Q)$ is projective in $\J(Z)$ and $u_{Q'}$ is an epimorphism
			in $\J(Z)$, there is a morphism
			$
			v:f_Z(Q)\longrightarrow f_Z(Q')
			$
			such that
			$
			u_{Q'}v=wu_Q.
			$
			By Lemma~\ref{lem:fU-projectives}, the functor
			$$
			f_Z:\P({}^{\perp}\tau Z)
			\longrightarrow\P(\J(Z))
			$$
			is full. Hence there is a morphism $v':Q\to Q'$ satisfying
			$
			f_Z(v')=v.
			$
			Naturality of the maps $u_X$ gives a commutative square
			
			$$
			\begin{tikzcd}
				f_Z(Q) \arrow[rr,"u_Q"] \arrow[d,"v"] && F_Zf_ZQ \arrow[d,"F_Zv=F_Zf_Zv'"] \\
				f_Z(Q') \arrow[rr,"u_{Q'}"] && F_Zf_ZQ'
			\end{tikzcd}
			$$
			
			Using that $u_{Q'}v = w u_Q$, we obtain $wu_Q =F_Zvu_Q$, and
			since $u_Q$ is an epimorphism, we conclude that 
			$
			F_Zv=w.
			$
			Thus the restriction of $F_Z$ to projectives is full.
			
			It remains to prove density. First, \cref{prop:FZaddC}(a), together with the density
			of
			$$
			f_C:\P({}^{\perp_{\Lambda/I}}\tau_{\Lambda/I}C)
			\longrightarrow\P(\J_{\Lambda/I}(C)),
			$$
			shows that every projective object of $\J_{\Lambda/I}(C)$ is a direct
			summand of an object in the image of
			$F_Z|_{\P(\J(Z))}$. Thus this restriction is dense up to direct
			summands.
			
			Let
			$P_1,\ldots,P_n$
			be representatives of the isomorphism classes of indecomposable
			projective objects of $\J(Z)$. Note that if we have an epimorphism $P_i \to X$  in $\J(Z)$ with $X \neq 0$,
			then $X$ must be indecomposable. This is because $P_i$ has a simple top as an object in the finite length module category $\J(Z)$.

			By Lemma \ref{lem:CZinitial}(b), the wide category
			$\J_{\Lambda/I}(C)$ also has rank $n$. The natural transformation
			constructed in Proposition~\ref{prop:FZ-defined} gives, for each $i$, an
			epimorphism
			$P_i\longrightarrow F_Z(P_i)$
			in $\J(Z)$; here we use
			$\J_{\Lambda/I}(C)\subseteq\J(Z)$
			because $\J(Z)=\Filt\J_{\Lambda/I}(C)$. 
			Consequently, each $F_Z(P_i)$ is either zero or indecomposable.
			
			Every indecomposable projective object of $\J_{\Lambda/I}(C)$ occurs as a
			direct summand of some finite direct sum of objects $F_Z(P_i)$. Since the
			nonzero objects $F_Z(P_i)$ are indecomposable, every indecomposable
			projective object of the target is isomorphic to one of them. We therefore
			obtain a surjection
			$$
			\{P_i\mid F_Z(P_i)\neq0\}
			\longrightarrow
			\{\text{isomorphism classes of indecomposable projectives in }
			\J_{\Lambda/I}(C)\}.
			$$
			The target has cardinality $n$, while the source has cardinality at most
			$n$. Hence every $F_Z(P_i)$ is nonzero, the objects $F_Z(P_i)$ are pairwise
			nonisomorphic, and they exhaust the indecomposable projective objects of
			$\J_{\Lambda/I}(C)$. Thus the restriction of $F_Z$ is dense.
		\end{proof}
		
		\subsection{Proof of the main results}
		
		We first give a general statement for a functor between abelian categories. Most of this is classical, but we 
		include a proof for completeness.
		
		\begin{lemma}\label{lem:projective-generator-criterion}
			Let $\A$ and $\B$ be $k$-linear abelian categories which
			are equivalent to module categories of finite-dimensional $k$-algebras.
			Assume that
			$
			\rk\A=\rk\B.
			$
			Let
			$
			G:\A\longrightarrow\B
			$
			be a right exact $k$-linear functor whose restriction
			$$
			G|_{\P(\A)}:
			\P(\A)\longrightarrow\P(\B)
			$$
			is full and dense. Let $P$ be a basic projective generator of $\A$
			and let
			$\Gamma=\End_{\A}(P)^{\op}.$
			Then the following hold.
			
			\begin{itemize}
				\item[(a)]The object $P'=G(P)$ is a basic projective generator of $\B$.
				
				\item[(b)]
				The functor $G$ induces a surjective $k$-algebra homomorphism
				$$
				\pi:\Gamma\longrightarrow
				\Gamma':=\End_{\B}(P')^{\op}.
				$$
				If $J=\ker\pi$, then
				$$
				\Gamma'\simeq\Gamma/J
				\qquad\text{and}\qquad
				J\subseteq\rad\Gamma.
				$$
				
				\item[(c)]
				Under the equivalences induced by the projective generators $P$ and $P'$,
				the functor $G$ is naturally isomorphic to extension of scalars along
				$\pi$. More precisely, there is a diagram commuting up to natural
				isomorphism
				$$
				\begin{tikzcd}[column sep=large,row sep=large]
					\A
					\arrow[rr,"{\Hom_{\A}(P,-)}","\sim"']
					\arrow[d,"G"']
					& & \mods\Gamma
					\arrow[d,"{(\Gamma/J)\otimes_\Gamma-}"]
					\\
					\B
					\arrow[rr,"{\Hom_{\B}(P',-)}","\sim"']
					& &\mods(\Gamma/J).
				\end{tikzcd}
				$$
			\end{itemize}
		\end{lemma}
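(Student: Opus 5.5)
We prove (a), (b) and (c) in that order, using only that $G$ is right exact, $k$-linear, and full and dense on projectives.

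For (a), write $P=\amalg_{i=1}^n P_i$ with the $P_i$ pairwise nonisomorphic indecomposable projectives, where $n=\rk\A$. Each $G(P_i)$ is projective in $\B$. By density, every indecomposable projective of $\B$ is isomorphic to $G(Q)$ for some $Q\in\P(\A)$; decomposing $Q$, it is a summand of some $G(P_i)$.

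Fullness shows that each nonzero $G(P_i)$ is indecomposable. Indeed, $\End_{\B}(G(P_i))$ is a quotient of the local ring $\End_{\A}(P_i)$, so it is local or zero.

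We now count. There are $n$ indecomposable projectives in $\B$, each isomorphic to one of the at most $n$ nonzero objects $G(P_i)$. Hence all $G(P_i)$ are nonzero and pairwise nonisomorphic. This is the same counting argument as in Proposition~\ref{prop:FZ-projectives}. So $P'=G(P)$ is a basic projective generator.

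For (b), $G$ gives a $k$-algebra map $\End_{\A}(P)\to\End_{\B}(P')$, which is surjective by fullness. Passing to opposite rings gives $\pi$. To see $J\subseteq\rad\Gamma$, take $x\in J$ and $y\in\Gamma$. We must show $1-yx$ is invertible. Since $\pi(1-yx)=1$, its image is an isomorphism of $P'$.

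The key point, and the main obstacle I expect, is to show that an endomorphism $e$ of $P$ with $G(e)$ invertible is itself invertible. I would argue as follows.
\begin{itemize}
\item By Fitting's lemma, some power $e^m$ splits $P=\operatorname{Im}\oplus\operatorname{Ker}$, and $e^m$ acts invertibly on the image.
\item The kernel summand $K$ is projective, and $G(e^m)$ is invertible. Applying $G$ to the decomposition, $G(K)=0$.
\item Since $G$ sends nonzero projectives to nonzero objects (every $G(P_i)\neq 0$), we get $K=0$, so $e$ is invertible.
\end{itemize}
Thus $J$ is an ideal of quasi-regular elements, and $J\subseteq\rad\Gamma$. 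The isomorphism $\Gamma'\simeq\Gamma/J$ then follows from surjectivity.

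For (c), identify $\A\simeq\mods\Gamma$ via $\Hom_{\A}(P,-)$, sending $P$ to $\Gamma$, and similarly identify $\B\simeq\mods\Gamma'$. Both $G$ and $\Gamma'\otimes_\Gamma-$ are right exact. On $\add P$ there is a natural isomorphism
$$\Hom_{\B}(P',G(-))\;\simeq\;\Gamma'\otimes_\Gamma\Hom_{\A}(P,-).$$
It is determined on $P$ itself by the identification $\Hom_{\B}(P',G(P))=\Gamma'$ and the compatibility of $\pi$ with the right $\Gamma$-actions, since both sides send $\Gamma\mapsto\Gamma'$ and $\gamma\mapsto\pi(\gamma)$. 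This extends additively to $\add P$.

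Finally, every object of $\A$ has a projective presentation $Q_1\to Q_0\to X\to0$ with $Q_i\in\add P$. Both functors preserve cokernels, so the natural isomorphism extends uniquely to all of $\A$. This is the standard Eilenberg–Watts type argument.
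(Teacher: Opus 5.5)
Your proposal is correct. Parts (a) and (c) follow the paper's proof essentially step for step. For (a) this means the same local-endomorphism-ring argument and counting. For (c) it means the same comparison of two right exact functors on $\add P$, then extension along projective presentations. The paper makes (c) concrete by defining $\theta_Q(\gamma'\otimes h)=\gamma'\cdot G(h)$ for every $Q\in\P(\A)$. This gives naturality for free and covers summands of $P^n$ without your separate ``extend additively'' step.

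The genuine difference is in the proof of $J\subseteq\rad\Gamma$. The paper argues with simple modules. Restriction along $\pi$ identifies the simple $\Gamma/J$-modules with the simple $\Gamma$-modules annihilated by $J$. Since $\rk\Gamma'=\rk\B=\rk\A=\rk\Gamma$, every simple $\Gamma$-module is killed by $J$, so $J\subseteq\bigcap_S\ann S=\rad\Gamma$.

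You instead use the quasi-regularity characterization of the radical. Via Fitting's lemma you show that $G$ reflects invertibility of endomorphisms of $P$. This uses only that $G$ kills no nonzero projective, which is where the rank hypothesis enters, through the count in (a).

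Both routes are valid. The paper's is shorter and applies the rank equality directly. Yours stays inside $\A$ and isolates a slightly more intrinsic statement: $G$ is conservative on endomorphisms of $P$. For surjective $\pi$ this is equivalent to $J\subseteq\rad\Gamma$, since units lift modulo the radical.
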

		
		\begin{proof}
			Let $n=\rk\A=\rk\B$ and write
			$P=P_1\amalg\cdots\amalg P_n,$
			where the $P_i$ are pairwise nonisomorphic indecomposable projective
			objects. Since the restriction of $G$ is full, it induces a surjective
			homomorphism
			$$
			\End_{\A}(P_i)\longrightarrow
			\End_{\B}(G(P_i)).
			$$
			The algebra $\End_{\A}(P_i)$ is local. Hence, if $G(P_i)\neq0$,
			then $\End_{\B}(G(P_i))$ is local, and therefore $G(P_i)$ is
			indecomposable.
			
			Let $Q$ be an indecomposable projective object of $\B$. Since the
			restriction of $G$ is dense, there is a projective object $R$ of
			$\A$ with $G(R)\simeq Q$. Decomposing $R$ into indecomposable
			projectives shows that $Q\simeq G(P_i)$ for some $i$. Thus the assignment
			$P_i\longmapsto G(P_i)$
			induces a surjection from the set of those $P_i$ for which $G(P_i)\neq0$
			onto the set of isomorphism classes of indecomposable projective objects
			of $\B$. The latter set has cardinality $n$, while the former has
			cardinality at most $n$. It follows that every $G(P_i)$ is nonzero and that
			the objects $G(P_i)$ are pairwise nonisomorphic. Therefore
			$$
			P'=G(P)=G(P_1)\amalg\cdots\amalg G(P_n)
			$$
			is a basic projective generator of $\B$.
			
			The functor $G$ induces a homomorphism
			$$
			\End_{\A}(P)\longrightarrow\End_{\B}(P'),
			\qquad h\longmapsto G(h),
			$$
			which is surjective because the restriction of $G$ is full. Passing to
			opposite algebras gives a surjective homomorphism
			$$
			\pi:\Gamma=\End_{\A}(P)^{\op}
			\longrightarrow
			\Gamma'=\End_{\B}(P')^{\op}.
			$$
			Let $J=\ker\pi$. Then $\Gamma'\simeq\Gamma/J$.
			
			It remains to prove that $J\subseteq\rad\Gamma$. The simple $\Gamma'$-modules
			are exactly the simple
			$\Gamma$-modules annihilated by $J$. Since
			$\rk\Gamma'=\rk\B=\rk\A=\rk\Gamma,$
			the simple $\Gamma'$-modules give all isomorphism classes of simple
			$\Gamma$-modules. Hence $J$ annihilates every simple $\Gamma$-module, and
			therefore
			$$
			J\subseteq
			\bigcap_{S\in\Simp\Gamma}\ann_\Gamma S
			=\rad\Gamma.
			$$
			
			Consider the equivalences
			$$
			H=\Hom_{\A}(P,-):\A\xrightarrow{\sim}\mods\Gamma
			$$
			and
			$$
			H'=\Hom_{\B}(P',-):\B\xrightarrow{\sim}\mods\Gamma'.
			$$
			For every projective object $Q$ of $\A$, define
			$$
			\theta_Q:\Gamma'\otimes_\Gamma\Hom_{\A}(P,Q)
			\longrightarrow
			\Hom_{\B}(P',G(Q))
			$$
			by
			$$
			\theta_Q(\gamma'\otimes h)=\gamma'\cdot G(h).
			$$
			This is well defined because the right $\Gamma$-module structure on
			$\Gamma'$ is induced by $\pi$, and $\pi$ itself is induced by $G$. The
			maps $\theta_Q$ are natural in $Q$. For $Q=P$, the map $\theta_P$ is the
			canonical isomorphism
			$$
			\Gamma'\otimes_\Gamma\Gamma\xrightarrow{\sim}\Gamma'.
			$$
			It is therefore an isomorphism for every finite direct sum of copies of
			$P$. Since every projective object of $\A$ is a direct summand of
			such a finite direct sum, naturality implies that $\theta_Q$ is an
			isomorphism for every $Q\in\P(\A)$. Thus
			$$
			\Gamma'\otimes_\Gamma H(-)
			\xrightarrow{\sim}
			H'G(-)
			$$
			on $\P(\A)$.
			
			Both functors
			$
			\Gamma'\otimes_\Gamma H(-)$ and 
			$H'G(-)$
			are right exact. A right exact functor from an abelian category with
			enough projectives is determined, up to natural isomorphism, by its
			restriction to the projective objects. Indeed, for any $X\in\A$,
			choose a projective presentation
			$$
			Q_1\longrightarrow Q_0\longrightarrow X\longrightarrow0.
			$$
			Applying the two right exact functors and using the natural isomorphism on
			$Q_1$ and $Q_0$ gives an induced natural isomorphism on their cokernels.
			Consequently,
			$
			\Gamma'\otimes_\Gamma H(-)
			\xrightarrow{\sim}
			H'G(-)
			$
			on all of $\A$. Since $\Gamma'\simeq\Gamma/J$, this gives the
			stated diagram.
		\end{proof}
		
		Applying \cref{lem:projective-generator-criterion} to our setting,
		we obtain the following.
		
		\begin{theorem}\label{thm:reduction-functor}
			Let $Z$ and $C$ be as in Setup \ref{setupCZ}. Choose an equivalence
			$\J(Z)\simeq\mods\Gamma,
			$
			where $\Gamma$ is basic. Then the following hold.
			
			\begin{itemize}
				\item[(a)]
				There is a right exact functor
				$$
				F_Z:\J(Z)\longrightarrow\J_{\Lambda/I}(C).
				$$
				
				\item[(b)]
				There is an ideal $I_Z\subseteq\Gamma$ such that
				$
				\J_{\Lambda/I}(C)\simeq\mods(\Gamma/I_Z),
				$
				and there is a diagram commuting up to natural isomorphism
				$$
				\begin{tikzcd}[column sep=large,row sep=large]
					\J(Z)
					\arrow[r,"\sim"]
					\arrow[d,"F_Z"']
					& \mods\Gamma
					\arrow[d,"{(\Gamma/I_Z)\otimes_\Gamma-}"]
					\\
					\J_{\Lambda/I}(C)
					\arrow[r,"\sim"']
					& \mods(\Gamma/I_Z).
				\end{tikzcd}
				$$
				
				\item[(c)]
				The ideal $I_Z$ is contained in $\rad\Gamma$.
			\end{itemize}
		\end{theorem}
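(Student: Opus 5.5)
The plan is to apply Lemma~\ref{lem:projective-generator-criterion} with $\A=\J(Z)$, $\B=\J_{\Lambda/I}(C)$ and $G=F_Z$, the functor constructed in Proposition~\ref{prop:FZ-defined}. Part (a) is then immediate: Proposition~\ref{prop:FZ-defined} already shows that $F_Z=f_CF|_{\J(Z)}$ is well defined and right exact.

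Next I will verify the hypotheses of Lemma~\ref{lem:projective-generator-criterion}.
\begin{itemize}
\item[(1)] Both categories are equivalent to module categories of finite-dimensional algebras. For $\J(Z)$ this is Theorem~\ref{thm:jasso-module-category}, using that $Z$ is $\tau$-rigid by Lemma~\ref{lem:CZinitial}(a). For $\J_{\Lambda/I}(C)$ it is the same theorem applied over $\Lambda/I$.
\item[(2)] The ranks agree: $\rk\J(Z)=\rk\J_{\Lambda/I}(C)$ by Lemma~\ref{lem:CZinitial}(b).
\item[(3)] $F_Z$ is $k$-linear, since it is a composite of $F=\Lambda/I\otimes_\Lambda-$, an inclusion and the torsion-free functor $f_C$, all of which are $k$-linear.
\item[(4)] $F_Z$ restricts to a full and dense functor $\P(\J(Z))\to\P(\J_{\Lambda/I}(C))$, by Proposition~\ref{prop:FZ-projectives}.
\end{itemize}

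Now take a basic projective generator $P$ of $\J(Z)$. After replacing the chosen equivalence $\J(Z)\simeq\mods\Gamma$ by the one given by $\Hom_{\J(Z)}(P,-)$, we may assume $\Gamma\cong\End(P)^{\op}$. This is harmless because $\Gamma$ is basic, so any equivalence with $\mods\Gamma$ differs from $\Hom(P,-)$ by an automorphism of $\Gamma$. The ideal $I_Z$ is transported along that automorphism, and both the condition $I_Z\subseteq\rad\Gamma$ and the form of the diagram are preserved.

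Lemma~\ref{lem:projective-generator-criterion}(b),(c) then gives a surjection $\pi\colon\Gamma\to\End(F_Z P)^{\op}$ with kernel $J$. Set $I_Z:=J$. The lemma gives $J\subseteq\rad\Gamma$, which is (c), and $\J_{\Lambda/I}(C)\simeq\mods(\Gamma/I_Z)$ via $\Hom(F_ZP,-)$. It also gives the square commuting up to natural isomorphism, which is (b).

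The only point needing care is the compatibility between the arbitrary chosen equivalence and the canonical one. I would handle it by noting that any two equivalences $\J(Z)\simeq\mods\Gamma$ with $\Gamma$ basic differ by an autoequivalence of $\mods\Gamma$ induced by an algebra automorphism $\sigma$ (up to natural isomorphism). Extension of scalars along $\Gamma\to\Gamma/J$ is then conjugated to extension of scalars along $\Gamma\to\Gamma/\sigma(J)$, and $\sigma(J)\subseteq\rad\Gamma$ still holds. All the substantive input is already in Propositions~\ref{prop:FZ-defined} and~\ref{prop:FZ-projectives}.
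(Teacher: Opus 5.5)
Your proposal is correct and follows essentially the same route as the paper: define $F_Z=f_CF|_{\J(Z)}$, and take right exactness from Proposition~\ref{prop:FZ-defined}, fullness and density on projectives from Proposition~\ref{prop:FZ-projectives}, equal ranks from Lemma~\ref{lem:CZinitial}(b) and the module-category presentations from Theorem~\ref{thm:jasso-module-category}, then apply Lemma~\ref{lem:projective-generator-criterion}. Your extra step, which identifies the chosen basic $\Gamma$ with $\End(P)^{\op}$ and transports $J$ along the resulting isomorphism (this preserves the radical), makes explicit an identification the paper leaves implicit, and it is sound.
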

		
		\begin{proof}
			Define
			$$
			F_Z=f_CF|_{\J(Z)}.
			$$
			By Proposition~\ref{prop:FZ-defined}, this is a well-defined right exact
			functor. By Proposition~\ref{prop:FZ-projectives}, its restriction to the
			projective objects is full and dense. By Lemma \ref{lem:CZinitial}(b), the categories
			$\J(Z)$ and $\J_{\Lambda/I}(C)$ have the same rank, and by
			Theorem~\ref{thm:jasso-module-category} they are equivalent to module
			categories of finite-dimensional algebras. Applying
			Lemma~\ref{lem:projective-generator-criterion} with
			$$
			\A=\J(Z),
			\qquad
			\B=\J_{\Lambda/I}(C),
			\qquad
			G=F_Z,
			$$
			gives a surjective algebra homomorphism
			$$
			\Gamma\longrightarrow\Gamma/I_Z
			$$
			with $I_Z\subseteq\rad\Gamma$, together with the required equivalence and
			commutative diagram.
		\end{proof}
		
		We can now prove the first main theorem.
		
		\begin{proof}[Proof of \cref{thm:main1}]
			We argue by
			induction on $r$.
			For $r=1$, a $\tau$-exceptional sequence in $\mods\Lambda/I$ is an
			indecomposable $\tau_{\Lambda/I}$-rigid module $C$. Let
			$$
			\Delta_I(C)=Z=P_s({}^{\perp}\J_{\Lambda/I}(C)).
			$$
			By Lemma \ref{lem:CZinitial}(c), we have that $Z$ is an indecomposable $\tau$-rigid module. If
			$\widetilde C$ is another indecomposable $\tau_{\Lambda/I}$-rigid module
			with the same lift $Z$, then \cref{prop:FZaddC}(b) gives
			$
			F(Z)\simeq C^a\simeq\widetilde C^{\,b}
			$
			for positive integers $a$ and $b$. By Krull--Schmidt, we get $C\simeq\widetilde C$.
			Thus the map is injective for $r=1$.
			
			Assume that $r>1$ and that the result holds for sequences of length
			$r-1$. Let
			$$
			(C_1,\ldots,C_r)
			$$
			be a $\tau$-exceptional sequence in $\mods\Lambda/I$, and let
			$Z_r=P_s({}^{\perp}\J_{\Lambda/I}(C_r)).$
			By Theorem~\ref{thm:reduction-functor}, there is a finite-dimensional
			algebra $\Gamma$ and an ideal $I_{Z_r}\subseteq\rad\Gamma$ such that
			$$
			\J(Z_r)\simeq\mods\Gamma,
			\qquad
			\J_{\Lambda/I}(C_r)\simeq\mods(\Gamma/I_{Z_r}).
			$$
			The algebra $\Gamma$ is $\tau$-tilting finite by Jasso's $\tau$-tilting
			reduction theorem \cite[Theorem~3.15]{jasso}. Hence the induction
			hypothesis, applied to $\Gamma$ and $I_{Z_r}$, gives an injective map
			$$
			\Delta':
			\left\{
			\begin{array}{c}
				\tau\text{-exceptional sequences in }\J_{\Lambda/I}(C_r)\\
				\text{of length }r-1
			\end{array}
			\right\}
			\longrightarrow
			\left\{
			\begin{array}{c}
				\tau\text{-exceptional sequences in }\J(Z_r)\\
				\text{of length }r-1
			\end{array}
			\right\}.
			$$
			Since $(C_1,\ldots,C_{r-1})$ is a $\tau$-exceptional sequence in
			$\J_{\Lambda/I}(C_r)$, write
			$$
			(Z_1,\ldots,Z_{r-1})
			=
			\Delta'(C_1,\ldots,C_{r-1})
			$$
			and define
			$$
			\Delta_I(C_1,\ldots,C_r)
			=(Z_1,\ldots,Z_{r-1},Z_r).
			$$
			By definition, this is a $\tau$-exceptional sequence in $\mods\Lambda$.
			
			To prove injectivity, suppose that
			$$
			\Delta_I(C_1,\ldots,C_r)
			=
			\Delta_I(\widetilde C_1,\ldots,\widetilde C_r).
			$$
			Then $Z_r=\widetilde Z_r$, and the argument for $r=1$ gives
			$C_r\simeq\widetilde C_r$. The same reduction category and the same map
			$\Delta'$ therefore occur for the first $r-1$ terms, and
			$$
			\Delta'(C_1,\ldots,C_{r-1})
			=
			\Delta'(\widetilde C_1,\ldots,\widetilde C_{r-1}).
			$$
			The induction hypothesis implies
			$$
			(C_1,\ldots,C_{r-1})
			=
			(\widetilde C_1,\ldots,\widetilde C_{r-1}),
			$$
			which proves the injectivity of $\Delta_I$.
		\end{proof}
		
		The second main theorem is also a direct consequence of the recursive
		construction.
		
		\begin{proof}[Proof of Theorem \ref{thm:main2}]
			Let
			$(C_1,\ldots,C_t)$
			be a $\tau$-exceptional sequence in $\mods\Lambda/I$, and let
			$$
			\Delta_I(C_1,\ldots,C_t)=(Z_1,\ldots,Z_t).
			$$
			Let $\V_i$ and $\W_i$ be the wide subcategories defined in the introduction.
			For the final term, set
			$$
			F_t=F:\V_t=\mods\Lambda\longrightarrow \W_t=\mods\Lambda/I.
			$$
			\cref{prop:FZaddC}(b) gives
			$$
			F_t(Z_t)\in\add C_t.
			$$
			
			Now fix $i<t$. At the corresponding stage of the recursive construction,
			$\V_i$ and $\W_i$ are the perpendicular categories obtained from
			$\V_{i+1}$ and $\W_{i+1}$ by reducing with respect to $Z_{i+1}$ and
			$C_{i+1}$, respectively. Theorem~\ref{thm:reduction-functor}, applied in
			the module categories equivalent to $\V_{i+1}$ and $\W_{i+1}$, gives a
			right exact functor
			$$
			F_i:\V_i\longrightarrow \W_i
			$$
			which, under suitable equivalences
			$$
			V_i\simeq\mods\Gamma_i,
			\qquad
			W_i\simeq\mods(\Gamma_i/I_i),
			$$
			is naturally isomorphic to
			$(\Gamma_i/I_i)\otimes_{\Gamma_i}-$
			for an ideal $I_i\subseteq\rad\Gamma_i$. At this stage $Z_i$ is, by
			construction, the chosen lift of $C_i$. \cref{prop:FZaddC}(b), applied in these
			module categories, therefore gives
			$$
			F_i(Z_i)\in\add C_i.
			$$
			Repeating this for $i=t-1,t-2,\ldots,1$ proves the theorem.
		\end{proof}
		
		We finish with the following observation, which is not used later.
		
		\begin{lemma}\label{lem:torsion-classes-under-FZ}
			Let $\T$ be a torsion class in $\mods\Lambda$ satisfying
			$$
			\Gen Z\subseteq\T
			\subseteq{}^{\perp}\tau Z.
			$$
			Then
			$$
			F_Zf_Z(\T)
			=
			f_CF(\T)
			=
			\T\cap\J_{\Lambda/I}(C),
			$$
			and this is a torsion class in $\J_{\Lambda/I}(C)$.
		\end{lemma}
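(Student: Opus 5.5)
The plan is to prove the two equalities separately and then deduce that the result is a torsion class. Throughout, fix $\T$ with $\Gen Z\subseteq\T\subseteq{}^{\perp}\tau Z$.

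\emph{First equality.} Since $\T\subseteq{}^{\perp}\tau Z$, the natural isomorphism $\rho_X\colon f_CF(X)\to F_Zf_Z(X)$ of Proposition~\ref{prop:FZ-defined} applies to every $X\in\T$. This gives $F_Zf_Z(\T)=f_CF(\T)$ as classes of objects up to isomorphism.

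\emph{Second equality.} I would first identify the image $F(\T)$. By Lemma~\ref{lem:Gen-preserved}(b), $F(\T)=\T\cap\mods\Lambda/I=:\T'$, which is a torsion class in $\mods\Lambda/I$ by Proposition~\ref{prop:dirrt-facts}(a). From the inclusions $\Gen Z\subseteq \T\subseteq{}^\perp\tau Z$, applying $F$ together with Lemma~\ref{lem:FGenZ} and Lemma~\ref{lem:Fperptau} gives
\[
\Gen_{\Lambda/I}C\subseteq \T'\subseteq {}^{\perp_{\Lambda/I}}\tau_{\Lambda/I}C .
\]
Now I use the standard fact from $\tau$-tilting reduction (Jasso): for $\T'$ in this interval, $f_C(\T')=\T'\cap C^{\perp_{\Lambda/I}}=\T'\cap\J_{\Lambda/I}(C)$, and this is a torsion class in $\J_{\Lambda/I}(C)$.

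For the inclusion $\supseteq$ of that fact: if $Y\in\T'\cap C^{\perp_{\Lambda/I}}$, then $f_C(Y)=Y$. For $\subseteq$: $f_C(Y)$ is a quotient of $Y\in\T'$, so it lies in $\T'$, and it lies in $C^{\perp}$ by definition; the equality $C^{\perp_{\Lambda/I}}\cap\T'=\J_{\Lambda/I}(C)\cap\T'$ holds because $\T'\subseteq{}^{\perp}\tau_{\Lambda/I}C$.

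Finally, $\T\cap\J_{\Lambda/I}(C)=\T\cap\mods\Lambda/I\cap\J_{\Lambda/I}(C)=\T'\cap\J_{\Lambda/I}(C)$. Combining, $f_CF(\T)=f_C(\T')=\T\cap\J_{\Lambda/I}(C)$.

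\emph{Torsion class property.} The intersection $\T'\cap\J_{\Lambda/I}(C)$ is closed under extensions in $\J_{\Lambda/I}(C)$, since both $\T'$ and the wide subcategory $\J_{\Lambda/I}(C)$ are extension closed. It is closed under quotients inside $\J_{\Lambda/I}(C)$ because $\T'$ is closed under quotients. Jasso's theorem could be cited instead, but this direct check suffices.

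The main point needing care is confirming that $F(\T)$ lands in the correct interval. This is where Lemmas~\ref{lem:FGenZ} and~\ref{lem:Fperptau} are essential, together with the monotonicity of $F$ on torsion classes via Lemma~\ref{lem:Gen-preserved}(b).
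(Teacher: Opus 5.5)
Your proof is correct. For the two equalities it follows the paper's argument essentially step for step:
\begin{itemize}
\item the natural isomorphism $\rho$ of Proposition~\ref{prop:FZ-defined} on ${}^{\perp}\tau Z$;
\item $F(\T)=\T\cap\mods\Lambda/I$ from Lemma~\ref{lem:Gen-preserved}(b);
\item the computation $f_C(\T')=\T'\cap C^{\perp_{\Lambda/I}}$;
\item Lemma~\ref{lem:Fperptau} to cut down to $\J_{\Lambda/I}(C)$.
\end{itemize}
The paper obtains $\Gen_{\Lambda/I}C\subseteq\T'$ from $C$ being a quotient of $Z$, where you use Lemma~\ref{lem:FGenZ}. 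Your own two-line verification shows that $f_C(\U)=\U\cap C^{\perp_{\Lambda/I}}$ holds for every torsion class $\U$, so neither argument actually needs this lower bound.

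The genuine difference is in showing that the result is a torsion class in $\J_{\Lambda/I}(C)$. The paper goes through the quotient presentation of Theorem~\ref{thm:reduction-functor}. It uses Jasso's results that $f_Z(\T)$ is a torsion class in $\J(Z)\simeq\mods\Gamma$ and that $\Gamma$ is $\tau$-tilting finite. It then applies Lemma~\ref{lem:Gen-preserved}(b) to $\Gamma$ and $I_Z$, so that $F_Zf_Z(\T)$ appears as the image of a torsion class under extension of scalars.

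You instead note that the intersection of a torsion class with a wide subcategory is a torsion class in that subcategory. This works because epimorphisms and short exact sequences in a wide subcategory are exactly those of the ambient module category.

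Your route is shorter and self-contained: it needs neither the equivalences nor $\tau$-tilting finiteness of $\Gamma$. The paper's route puts the statement in the recursive framework, showing that $F_Z$ acts on torsion classes of $\J(Z)$ the way $F$ acts on those of $\mods\Lambda$. Your computation recovers this too. Indeed $f_Z(\T)=\T\cap Z^{\perp}$ by the same argument, and $\J_{\Lambda/I}(C)\subseteq\J(Z)\subseteq Z^{\perp}$, so $F_Zf_Z(\T)=f_Z(\T)\cap\J_{\Lambda/I}(C)$.
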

		
		\begin{proof}
			By \cite[Theorem~3.23]{jasso},
			$$
			f_Z(\T)=\T\cap Z^{\perp}
			$$
			is a torsion class in $\J(Z)$. By
			Proposition~\ref{prop:FZ-defined}, the functors $F_Zf_Z$ and $f_CF$ are
			naturally isomorphic on ${}^{\perp}\tau Z$, and hence
			$$
			F_Zf_Z(\T)=f_CF(\T).
			$$
			
			By Lemma~2.1(b), we have
			$
			F(\T)=\T\cap\mods\Lambda/I.
			$
			The torsion-free functor $f_C$ sends a torsion class $\U$ containing
			$\Gen_{\Lambda/I}C$ to its intersection with $C^{\perp_{\Lambda/I}}$.
			Indeed, for $X\in \U$, the canonical sequence
			$$
			0\longrightarrow t_C(X)\longrightarrow X\longrightarrow f_C(X)
			\longrightarrow0
			$$
			has $t_C(X)\in\Gen_{\Lambda/I}C\subseteq \U$ and since
			$\U$ is closed under factor modules, we have $f_C(X)\in \U$.
			Conversely, if $Y\in \U\cap C^{\perp_{\Lambda/I}}$, then
			$f_C(Y)=Y$. Since $C$ is a quotient of $Z$ and $\T$ is closed under quotients, $\Gen_{\Lambda/I}C\subseteq \T\cap \mods \Lambda/I$. Therefore
			$$
			f_CF(\T)
			=
			F(\T)\cap C^{\perp_{\Lambda/I}}.
			$$
			Since $\T\subseteq{}^{\perp}\tau Z$, Lemma~4.7
			gives
			$$
			F(\T)\subseteq
			{}^{\perp_{\Lambda/I}}\tau_{\Lambda/I}C.
			$$
			Consequently, we have
			$$
			f_CF(\T)
			=
			\T\cap C^{\perp_{\Lambda/I}}
			\cap{}^{\perp_{\Lambda/I}}\tau_{\Lambda/I}C
			=
			\T\cap\J_{\Lambda/I}(C).
			$$
			Under the quotient
			presentation supplied by Theorem~\ref{thm:reduction-functor}, identify 
			$\J(Z) \simeq \mods \Gamma$ and $\J_{\Lambda/I}(C) \simeq \mods (\Gamma/I_Z)$.
			The algebra $\Gamma$ is $\tau$-tilting finite by Jasso's reduction theorem
			\cite[Thm. 3.15]{jasso}. Hence Lemma \ref{lem:Gen-preserved}(b), applied to the torsion class $f_Z(\T)$ shows that $F_Zf_Z(\T)$ is a torsion class in $\J_{\Lambda/I}(C)$.
		\end{proof}

		\section{Alternative approaches to lifting $\tau$-rigid modules}
		Note that the natural inclusion functor $\mods \Lambda/I\rightarrow \mods \Lambda$ is right adjoint to $F$. Hence, for $X\in \mods \Lambda$ and $Y\in \mods \Lambda/I$, we have:
		\begin{equation}\label{eq:adj}
			\Hom_{\Lambda/I}(F(X),Y) \simeq \Hom_\Lambda(X,Y).
		\end{equation}  
		This gives the following:
		\begin{lemma}
			\label{lem:adjunction}
			For any full subcategory $\SC$ of $\mods\Lambda/I$, we have
			${}^{\perp}\SC=\varphi^+({}^{\perp_{\Lambda/I}}\SC)$.
		\end{lemma}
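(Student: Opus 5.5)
The plan is to prove the two inclusions directly from the adjunction isomorphism \eqref{eq:adj}, applied objectwise. Recall that $\varphi^+({}^{\perp_{\Lambda/I}}\SC)=\{X\in\mods\Lambda \mid F(X)\in {}^{\perp_{\Lambda/I}}\SC\}$. So the claim amounts to the equivalence, for each $X\in\mods\Lambda$,
$$\Hom_\Lambda(X,Y)=0 \text{ for all } Y\in\SC \iff \Hom_{\Lambda/I}(F(X),Y)=0 \text{ for all } Y\in\SC.$$

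To establish this, I will fix $X\in\mods\Lambda$ and an arbitrary $Y\in\SC$. Since $Y$ is a $\Lambda/I$-module, \eqref{eq:adj} gives an isomorphism of vector spaces
$$\Hom_{\Lambda/I}(F(X),Y)\simeq\Hom_\Lambda(X,Y).$$
Concretely, this map is precomposition with the canonical epimorphism $X\to X/IX=F(X)$: every $\Lambda$-map $X\to Y$ kills $IX$ because $IY=0$. Hence one side vanishes exactly when the other does. Quantifying over all $Y\in\SC$ then gives $X\in{}^{\perp}\SC$ if and only if $F(X)\in{}^{\perp_{\Lambda/I}}\SC$, which is the statement.

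There is no real obstacle here. The only point to take care over is bookkeeping. The left-hand side ${}^{\perp}\SC$ is computed in $\mods\Lambda$, with $\SC$ viewed as a subcategory of $\mods\Lambda$ via the fully faithful inclusion. The adjunction needs exactly that $Y$ lies in $\mods\Lambda/I$, and this holds for every object of $\SC$. No hypotheses on $I$ (such as $I\subseteq\rad\Lambda$) or on $\tau$-tilting finiteness are needed.
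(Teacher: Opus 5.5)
Your proposal is correct and matches the paper's proof: both apply the adjunction isomorphism \eqref{eq:adj} to each $S\in\SC$ and then quantify over $\SC$, giving $X\in{}^{\perp}\SC$ if and only if $F(X)\in{}^{\perp_{\Lambda/I}}\SC$. Your two extra remarks are also accurate: the isomorphism is precomposition with $X\to X/IX$, and no hypothesis on $I$ or on $\tau$-tilting finiteness is needed.
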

		\begin{proof}
			Let $X$ be a $\Lambda$-module.
			We have $X\in {}^{\perp}\SC$ if and only if $\Hom(X,S)=0$ for all $S$ in $\SC$, if and only if $\Hom_{\Lambda/I}(F(X),S)=0$ for all $S\in \SC$, if and only if
			$X\in \varphi^+({}^{\perp_{\Lambda/I}}\SC)$.
		\end{proof}
		
		\begin{corollary}
			\label{cor:liftingtorsion}
			Let $\T$ be a functorially finite torsion class in $\mods \Lambda/I$. 
			Then
			$$\varphi^+(\T)={}^{\perp}\J_{\Lambda/I}(\Pmods(\T))={}^{\perp}(\T^{\perp_{\Lambda/I}}).$$
		\end{corollary}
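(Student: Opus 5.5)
The statement is Corollary~\ref{cor:liftingtorsion}. I would first rewrite $\T$ as a left perpendicular category in $\mods\Lambda/I$, then transport this along Lemma~\ref{lem:adjunction}, and finally apply Proposition~\ref{prop:dirrt-facts}(b).

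Let $U=\Pmods(\T)$. This is a basic gen-minimal $\tau_{\Lambda/I}$-rigid module with $\Gen_{\Lambda/I}U=\T$, by the gen-minimal version of the Adachi--Iyama--Reiten correspondence recalled in Section~\ref{sec:background}. Applying \cite[Thm.~2.14]{bhm} inside $\mods\Lambda/I$, exactly as in Lemma~\ref{lem:known}(b) but now for a gen-minimal $U$ that need not be indecomposable, gives
$$\T=\Gen_{\Lambda/I}U={}^{\perp_{\Lambda/I}}\J_{\Lambda/I}(U).$$
Since $(\T,\T^{\perp_{\Lambda/I}})$ is a torsion pair, we also have $\T={}^{\perp_{\Lambda/I}}(\T^{\perp_{\Lambda/I}})$.

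Next I would apply Lemma~\ref{lem:adjunction} with $\SC=\J_{\Lambda/I}(U)$ and with $\SC=\T^{\perp_{\Lambda/I}}$. This yields
$${}^{\perp}\J_{\Lambda/I}(U)=\varphi^+\bigl({}^{\perp_{\Lambda/I}}\J_{\Lambda/I}(U)\bigr)=\varphi^+(\T),$$
and in the same way ${}^{\perp}(\T^{\perp_{\Lambda/I}})=\varphi^+(\T)$. Putting the two displays together gives the two claimed equalities.

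The one point to watch is that $\varphi^+$ is applied to exactly the same class in both cases, namely $\T$ itself. That is what the first step guarantees. Proposition~\ref{prop:dirrt-facts}(b) then serves as a consistency check: $\varphi^+(\T)\cap\mods\Lambda/I=\T$, which agrees with \eqref{eq-c}. The main obstacle is citing \cite[Thm.~2.14]{bhm} for a possibly decomposable gen-minimal module. If that reference only covers the indecomposable case, I would instead argue directly that $\T^{\perp_{\Lambda/I}}=\FiltCogen_{\Lambda/I}(\J_{\Lambda/I}(U))$. This holds because the torsion-free class $\T^{\perp}$ corresponds to the wide subcategory $\J(\Pmods(\T))$ under the Marks--\v{S}t'ov\'{\i}\v{c}ek bijection. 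From that, ${}^{\perp_{\Lambda/I}}\J_{\Lambda/I}(U)={}^{\perp_{\Lambda/I}}(\T^{\perp_{\Lambda/I}})=\T$.
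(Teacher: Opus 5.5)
Your proposal is correct and matches the paper's proof: you write $\T={}^{\perp_{\Lambda/I}}\J_{\Lambda/I}(\Pmods(\T))$ via \cite[Thm.~2.14]{bhm} and $\T={}^{\perp_{\Lambda/I}}(\T^{\perp_{\Lambda/I}})$ via the torsion pair, then apply Lemma~\ref{lem:adjunction} once to each. The appeal to Proposition~\ref{prop:dirrt-facts}(b) and your fallback argument are not needed, since the paper applies \cite[Thm.~2.14]{bhm} directly to the gen-minimal module $\Pmods(\T)$.
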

		\begin{proof}
			Note that $\T={}^{\perp_{\Lambda/I}}\J_{\Lambda/I}(\Pmods(\T))$ by \cite[Thm.\ 2.14]{bhm}.
			Hence, by Lemma~\ref{lem:adjunction} with $\SC=\J_{\Lambda/I}(\Pmods(\T))$, we have
			$$\varphi^+(\T)=\varphi^+({}^{\perp_{\Lambda/I}}\J_{\Lambda/I}(\Pmods(\T)))=
			{}^{\perp}\J_{\Lambda/I}(\Pmods(\T)).$$
			Note also that $\T={}^{\perp_{\Lambda/I}}(\T^{\perp_{\Lambda/I}})$ (see~\cite[IV.1.2]{ass06}).
			So, taking $\SC=\T^{\perp_{\Lambda/I}}$ in Lemma~\ref{lem:adjunction} gives
			$$\varphi^+(\T)=\varphi^+({}^{\perp_{\Lambda/I}}(\T^{\perp_{\Lambda/I}}))=
			{}^{\perp}(\T^{\perp_{\Lambda/I}}).$$
		\end{proof}
		
		\begin{corollary}
			\label{cor:alternativeZ}
			In Setup~\ref{setupCZ}, we have $Z=\Pmods(\varphi^+(\Gen_{\Lambda/I}C))$.
		\end{corollary}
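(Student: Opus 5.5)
We need to show $Z=\Pmods(\varphi^+(\Gen_{\Lambda/I}C))$, where by Setup~\ref{setupCZ} we have $Z=\Pmods({}^{\perp}\W)$ with $\W=\Filt\J_{\Lambda/I}(C)$. The plan is to show that the two torsion classes ${}^{\perp}\W$ and $\varphi^+(\Gen_{\Lambda/I}C)$ in $\mods\Lambda$ coincide; applying $\Pmods$ to equal torsion classes then gives the claim immediately.

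First, I apply Corollary~\ref{cor:liftingtorsion} to the torsion class $\T=\Gen_{\Lambda/I}C$ in $\mods\Lambda/I$. This class is functorially finite, since $C$ is $\tau_{\Lambda/I}$-rigid (or simply because $\Lambda/I$ is $\tau$-tilting finite). By Lemma~\ref{lem:known}(c), $\Pmods(\Gen_{\Lambda/I}C)=C$; alternatively this follows from the gen-minimal correspondence of \cite{bhm2}, since $C$ is indecomposable and hence gen-minimal. Corollary~\ref{cor:liftingtorsion} then gives
$$\varphi^+(\Gen_{\Lambda/I}C)={}^{\perp}\J_{\Lambda/I}(C).$$

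Second, I identify ${}^{\perp}\J_{\Lambda/I}(C)$ with ${}^{\perp}\W$. Since $\J_{\Lambda/I}(C)\subseteq\Filt\J_{\Lambda/I}(C)=\W$, we have ${}^{\perp}\W\subseteq{}^{\perp}\J_{\Lambda/I}(C)$. For the reverse inclusion, suppose $\Hom(X,Y)=0$ for all $Y\in\J_{\Lambda/I}(C)$. Inducting on filtration length with the left exact functor $\Hom(X,-)$ shows that $\Hom(X,Y)=0$ for all $Y\in\W$. This is the same observation used in the proof of Lemma~\ref{lem:FGenZ}, namely ${}^\perp\J(Z)={}^\perp\J_{\Lambda/I}(C)$, and we could instead cite that equality directly together with Lemma~\ref{lem:CZinitial}(b).

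Combining the two steps gives $\varphi^+(\Gen_{\Lambda/I}C)={}^{\perp}\W$, and hence $\Pmods(\varphi^+(\Gen_{\Lambda/I}C))=\Pmods({}^{\perp}\W)=Z$. I expect no real obstacle. The only points needing care are checking that the hypotheses of Corollary~\ref{cor:liftingtorsion} hold (functorial finiteness) and that the identification $\Pmods(\Gen_{\Lambda/I}C)=C$ is justified by Lemma~\ref{lem:known}(b),(c).
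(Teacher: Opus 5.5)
Your proposal is correct and follows the paper's own proof. You apply Corollary~\ref{cor:liftingtorsion} to $\T=\Gen_{\Lambda/I}C$, use $\Pmods(\Gen_{\Lambda/I}C)=C$, and identify ${}^{\perp}\J_{\Lambda/I}(C)$ with ${}^{\perp}\W$; the only difference is that you explicitly justify ${}^{\perp}\Filt\J_{\Lambda/I}(C)={}^{\perp}\J_{\Lambda/I}(C)$, which the paper uses silently in its first equality.
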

		\begin{proof}
			Apply Corollary~\ref{cor:liftingtorsion} to $\T=\Gen_{\Lambda/I}C$ to obtain
			$$Z=\Pmods({}^{\perp}\J_{\Lambda/I}(C))=\Pmods({}^{\perp}\J_{\Lambda/I}(\Pmods(\Gen_{\Lambda/I}C)))=\Pmods(\varphi^+(\Gen_{\Lambda/I}C)).$$
		\end{proof}
		
		We also note the following lattice-theoretic approach to establishing the indecomposability of $Z$ (Lemma \ref{lem:CZinitial}(c)) and Lemma~\ref{lem:Fperptau}, using the version of $Z$ given by Corollary~\ref{cor:alternativeZ}. We refer to~\cite[Sect.\ 2]{dirrt} for background in lattice theory. We first give a more general result:
		
		\begin{lemma}
			\label{lem:Fperptaunew}
			Let $Z$ be a $\tau$-rigid gen-minimal $\Lambda$-module, and let
			$C$ be a $\tau_{\Lambda/I}$-rigid gen-minimal $\Lambda/I$-module. Suppose that $\varphi^+(\Gen_{\Lambda/I}C)=\Gen Z$. Then
			\begin{itemize}
				\item[(a)] The modules $C$ and $Z$ have the same number of nonisomorphic direct summands. In particular, if $C$ is indecomposable, so is $Z$, and vice versa.
				\item[(b)] We have $F({}^{\perp}\tau Z)={}^{\perp_{\Lambda/I}}\tau_{\Lambda/I}C$.
			\end{itemize}
		\end{lemma}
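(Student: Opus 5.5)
The plan is to argue entirely in the lattices $\tors\Lambda$ and $\tors\Lambda/I$, using the results of \cite[Sect.\ 2, 5]{dirrt}. Since $\Lambda$ and $\Lambda/I$ are $\tau$-tilting finite, both lattices are finite. By Lemma~\ref{lem:Gen-preserved}(b), the map $F\colon\T\mapsto\T\cap\mods\Lambda/I$ is a surjective lattice morphism $\tors\Lambda\to\tors\Lambda/I$. Its fibres are the classes of a lattice congruence, and each fibre is an interval. By Proposition~\ref{prop:dirrt-facts}(b), $F\varphi^+=\mathrm{id}$, and $\varphi^+(\T')$ is the top element of the fibre over $\T'$. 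I will also use the brick-labelling description from \cite{dirrt}: a Hasse arrow $\T\to\U$ in $\tors\Lambda$ is contracted by $F$ (that is, $F(\T)=F(\U)$) if and only if its brick label is not a $\Lambda/I$-module. The hypothesis says exactly that $\Gen Z$ is the top of the fibre over $\Gen_{\Lambda/I}C$.

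For (a), I use the standard fact that for a gen-minimal $\tau$-rigid module $M$ the lower covers of $\Gen M$ in the torsion lattice are in bijection with the indecomposable summands of $M$ (each is $\Gen(M/M_i)$; see \cite[Thm.\ 1.1]{bhm2}, \cite{air}). So it suffices to give a bijection between lower covers of $\Gen Z$ in $\tors\Lambda$ and lower covers of $\Gen_{\Lambda/I}C$ in $\tors\Lambda/I$. The map is $\U\mapsto F(\U)$. The steps are as follows.
\begin{enumerate}[(i)]
\item No lower cover $\U$ of $\Gen Z$ lies in the same fibre. I would try to show the brick label $B$ of $\Gen Z\to\U$ is a $\Lambda/I$-module. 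The label satisfies $B\in\Gen Z\cap\U^{\perp}$ and is a quotient of a summand of $Z$. If instead $F(\U)=F(\Gen Z)$, I would use that the fibre is an interval with top $\varphi^+(\Gen_{\Lambda/I}C)={}^{\perp}\J_{\Lambda/I}(C)$ by Corollary~\ref{cor:liftingtorsion}. Then I aim for a contradiction from the fact that $\Gen Z$ is determined by its $\Lambda/I$-part via $\varphi^+$: we would get $\Gen Z=\varphi^+(F(\U))\subseteq{}^{\perp}(\U^{\perp}\cap\mods\Lambda/I)$, while $B\in\U^{\perp}$ must meet $\mods\Lambda/I$ nontrivially through its top. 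This uses $I\subseteq\rad\Lambda$, so that tops of $\Lambda$-modules are $\Lambda/I$-modules.
\item Each $F(\U)$ is a lower cover of $\Gen_{\Lambda/I}C$, and distinct $\U$ give distinct images. This is the standard behaviour of a congruence at the top element of a class: a non-contracted cover maps to a cover, and two lower covers of a top element with the same image would have their join in one class.
\item Surjectivity. Given a lower cover $\U'$ of $\Gen_{\Lambda/I}C$, take $\varphi^+(\U')\subsetneq\Gen Z$. Any lower cover $\U$ of $\Gen Z$ lying above $\varphi^+(\U')$ has $\U'\subseteq F(\U)\subsetneq\Gen_{\Lambda/I}C$, so $F(\U)=\U'$.
\end{enumerate}

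For (b), I would use the lattice characterisation ${}^{\perp}\tau M=\Gen M\vee\bigvee\{\text{upper covers of }\Gen M\}$. This holds because the interval $[\Gen M,{}^{\perp}\tau M]$ is isomorphic to $\tors\J(M)$ \cite{jasso}, whose top is the join of its atoms. Since $F$ preserves joins, $F({}^{\perp}\tau Z)$ is the join of $\Gen_{\Lambda/I}C$ with the images of the upper covers of $\Gen Z$. Each such image is either $\Gen_{\Lambda/I}C$ itself or an upper cover of it, since a lattice quotient maps covers to covers or equalities. This gives $\subseteq$. For $\supseteq$, given an upper cover $\V'$ of $\Gen_{\Lambda/I}C$, I pick any cover $\V$ of $\Gen Z$ inside $\varphi^+(\V')$, which strictly contains $\Gen Z$. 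Then $\V$ is not in the fibre of $\Gen Z$, because $\Gen Z$ is the top of that fibre, so $F(\V)=\V'$.

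The main obstacle is step (i) of part (a): showing that no lower arrow out of the fibre-top $\Gen Z$ is contracted. This is where gen-minimality and $I\subseteq\rad\Lambda$ must enter. If the direct argument fails, I would fall back on comparing brick labels of these arrows with $\Simp$ of the relevant wide subcategories, as in Lemma~\ref{lem:CZinitial}(b).
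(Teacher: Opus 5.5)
\textbf{Part (a) has a genuine gap.} Step (i) is not merely unproved; it is false. Being the top of a congruence class controls the upper covers of $\Gen Z$, not the lower ones.

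Here is a counterexample satisfying all the hypotheses. Take $\Lambda=kQ$ with $Q\colon 1\to 2\to 3$, let $I$ be generated by the path of length two, and put $Z=P_1$ and $C=P_1/IP_1$ (composition factors $S_1$ over $S_2$). Then
\[
\varphi^+(\Gen_{\Lambda/I}C)=\Gen P_1=\add\{P_1,P_1/IP_1,S_1\}.
\]
The unique lower cover of $\Gen Z$ is $\U=\add\{P_1/IP_1,S_1\}$, and $F(\U)=\U=\Gen_{\Lambda/I}C=F(\Gen Z)$. So this arrow is contracted; indeed its brick label $P_1$ is not a $\Lambda/I$-module.

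Your map $\U\mapsto F(\U)$ therefore sends the lower cover to $\Gen_{\Lambda/I}C$ itself. Meanwhile the lower cover $\add S_1$ of $\Gen_{\Lambda/I}C$ is hit by nothing. Steps (ii) and (iii) both rely on (i), so they fail too. The counts agree (both are $1$), but not via $F$.

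Your attempted contradiction in (i) has no force either. $\U^{\perp}$ is closed under submodules, not quotients, so the top of the label $B$ need not lie in $\U^\perp$. Here $B=P_1$ and its top $S_1$ lies in $\U$.

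\textbf{The missing idea is a counting argument.} By Adachi--Iyama--Reiten mutation, every torsion class of a $\tau$-tilting finite algebra has exactly $\rk$ neighbours in the Hasse diagram. Also $\rk\Lambda=\rk\Lambda/I$ because $I\subseteq\rad\Lambda$. You already establish in (b) that $F$ maps the upper covers of $\Gen Z$ bijectively onto those of $\Gen_{\Lambda/I}C$. In fact none is contracted, since $\Gen Z$ is the top of its fibre, so the case where the image equals $\Gen_{\Lambda/I}C$ never arises. Subtracting the number of upper covers from $\rk$ shows that $\Gen Z$ and $\Gen_{\Lambda/I}C$ have the same number of lower covers. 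By gen-minimality these numbers are $|Z|$ and $|C|$. This is where $I\subseteq\rad\Lambda$ and gen-minimality really enter, and it is the paper's proof.

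\textbf{Part (b) is correct} and is essentially the paper's argument: join of upper covers, $F$ preserves joins, bijection on upper covers at the top of a class. One point needs filling in. Jasso's interval isomorphism only describes upper covers of $\Gen M$ that lie in $[\Gen M,{}^{\perp}\tau M]$. You also need every upper cover of $\Gen M$ to lie in ${}^{\perp}\tau M$. This holds because $M=\Pmods(\Gen M)$, so each upper cover arises by mutating $P(\Gen M)$ at a summand outside $\add M$, and $M$ survives the mutation.
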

		\begin{proof}
			By~\cite[Prop.\ 5.7(d)]{dirrt}, the functor $F$ induces a surjective homomorphism of lattices from
			$\tors(\Lambda)$ to $\tors(\Lambda/I)$.
			Note that we may identify $\tors(\Lambda/I)$ with $\tors(\Lambda)/\Theta$, where $\Theta$ is the congruence on $\tors(\Lambda)$ induced by $F$.
			Then $\varphi^+$ coincides with the lattice-theoretic map $\pi^{\uparrow}$ by~\cite[Prop.\ 5.7]{dirrt}, where $\pi^{\uparrow}:\tors(\Lambda)\rightarrow \tors(\Lambda)$
			takes $\T\in \tors(\Lambda)$ to the unique maximum element in its congruence class.
			By the dual of~\cite[Prop.\ 2.2]{reading}, 
			noting~\cite[Prop.\ 5.7]{dirrt}, the functor $F$ induces a one-to-one 
			correspondence between the covers $\T_1,\T_2,\ldots, \T_r$ of 
			$\Gen Z$ in $\tors (\Lambda)$ and the covers of $F(\Gen 
			Z)=\Gen_{\Lambda/I}C$ in $\tors (\Lambda/I)$.
			
			Proof of part (a): By~\cite[Sect.\ 2.3]{air} (see also~\cite[Thm.\ 1.6]{bhm}), we have $Z=\Pmods(\Gen Z)$ and $C=\Pmods(\Gen_{\Lambda/I}C)$. Furthermore, the number of neighbours of a torsion class $\T$ in the Hasse diagram of $\tors(\Lambda)$ (i.e. the number of torsion classes covered by or covering $\T$) is equal to the rank of $\Lambda$ by~\cite[Thms. 2.18, 2.33]{air}. Since the ranks of $\Lambda$ and $\Lambda/I$ are the same (as $I$ is contained in the radical of $\Lambda$), it follows from the above that the number of torsion classes covered by $\Gen Z$ coincides with the number of torsion classes covered by
			$\Gen_{\Lambda/I}C$.
			Observe that if $T$ is a support $\tau$-tilting module with an indecomposable direct summand $X$, then $\Gen T\not=\Gen(T/X)$ if and only if $X$ is split projective in $\Gen T$. 
			Thus the number of torsion classes covered by a fixed torsion class is equal to the number of indecomposable split projective objects in it by~\cite[Thms.\ 2.18, 2.33]{air}. Part (a) now follows.
			
			Proof of part (b): By~\cite[Thm.\ 8.4]{bh2}, we have ${}^{\perp} \tau Z=\bigvee_{i=1}^r
			\T_i$ and, similarly, we have ${}^{\perp_{\Lambda/I}}\tau_{\Lambda/I}C=\bigvee_{i=1}^r F(\T_i)$.
			We thus obtain
			$$F({}^{\perp}\tau Z)=F\left(\bigvee_{i=1}^r \T_i\right)=
			\bigvee_{i=1}^r F(\T_i)={}^{\perp_{\Lambda/I}}\tau_{\Lambda/I} C,$$
			giving (b).
		\end{proof}

		This has the following corollary:
		
		\begin{corollary}\label{cor:F-properties}
			In Setup~\ref{setupCZ}, we have:
			\begin{itemize}
				\item[(a)] $Z$ is an indecomposable $\tau$-rigid $\Lambda$-module;
				\item[(b)] $F(\lperp{\tau Z}) = {}^{\perp_{\Lambda/I}}\tau_{\Lambda/I} C$.
			\end{itemize}
		\end{corollary}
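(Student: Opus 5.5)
The plan is to derive both parts from Lemma~\ref{lem:Fperptaunew}, applied with $C$ as in Setup~\ref{setupCZ} and $Z=\Pmods({}^{\perp}\J_{\Lambda/I}(C))$. So the work is to check its hypotheses: $Z$ is $\tau$-rigid and gen-minimal, $C$ is $\tau_{\Lambda/I}$-rigid and gen-minimal, and $\varphi^+(\Gen_{\Lambda/I}C)=\Gen Z$.

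On the $\Lambda/I$ side, $C$ is indecomposable and $\tau_{\Lambda/I}$-rigid by assumption. An indecomposable module is trivially gen-minimal.

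On the $\Lambda$ side, I would avoid Lemma~\ref{lem:CZinitial}, since the point is an independent lattice-theoretic route. Instead, I use Corollary~\ref{cor:alternativeZ} to write $Z=\Pmods(\varphi^+(\Gen_{\Lambda/I}C))$. By Proposition~\ref{prop:dirrt-facts} and Corollary~\ref{cor:liftingtorsion}, $\varphi^+(\Gen_{\Lambda/I}C)={}^{\perp}(\T^{\perp_{\Lambda/I}})$ is a torsion class in $\mods\Lambda$. Since $\Lambda$ is $\tau$-tilting finite, this torsion class is functorially finite \cite[Thm.~1.2]{dij}. The gen-minimal version of the Adachi--Iyama--Reiten correspondence recalled in Section~\ref{sec:background} then gives two things: $Z=\Pmods(\varphi^+(\Gen_{\Lambda/I}C))$ is a basic gen-minimal $\tau$-rigid module, and $\Gen Z=\varphi^+(\Gen_{\Lambda/I}C)$. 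This supplies the remaining hypothesis of Lemma~\ref{lem:Fperptaunew}.

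With the hypotheses checked, Lemma~\ref{lem:Fperptaunew}(a) shows that $Z$ has the same number of nonisomorphic indecomposable summands as $C$, namely one. Since $Z$ is basic, it is indecomposable, and together with $\tau$-rigidity this gives (a). Part (b) is then exactly Lemma~\ref{lem:Fperptaunew}(b).

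The only step needing care is identifying $\Gen Z$ with $\varphi^+(\Gen_{\Lambda/I}C)$. This requires that the split-projective bijection $\T\mapsto\Pmods(\T)$ is applied to a functorially finite torsion class. Here that holds because $\varphi^+$ of a torsion class is a torsion class, by the adjunction description in Lemma~\ref{lem:adjunction}, and every torsion class is functorially finite in the $\tau$-tilting finite setting.
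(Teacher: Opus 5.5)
Your proposal is correct and follows the paper's route: rewrite $Z=\Pmods(\varphi^+(\Gen_{\Lambda/I}C))$ via Corollary~\ref{cor:alternativeZ}, then apply Lemma~\ref{lem:Fperptaunew}(a) and (b). The paper leaves the hypotheses of that lemma implicit, and your verification supplies exactly what is needed: $\varphi^+(\Gen_{\Lambda/I}C)$ is a functorially finite torsion class, so the gen-minimal Adachi--Iyama--Reiten bijection makes $Z$ basic, gen-minimal and $\tau$-rigid with $\Gen Z=\varphi^+(\Gen_{\Lambda/I}C)$.
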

		\begin{proof}
			Let $C$ and $Z$ be as in Setup~\ref{setupCZ}. By Corollary~\ref{cor:alternativeZ},
			we have that $Z=\Pmods(\varphi^+(\Gen_{\Lambda/I}C))$.
			Since $C$ is an indecomposable $\Lambda/I$-module, $Z$ is an
			indecomposable $\Lambda$-module by
			Lemma~\ref{lem:Fperptaunew}(a).
			Part (b) then follows from Lemma~\ref{lem:Fperptaunew}(b).
		\end{proof}
		
		We then have the following. The proof is the same as that of \cref{prop:FZaddC} except that Corollary~\ref{cor:F-properties} is used in the first part for the fact that $F({}^{\perp} \tau Z)={}^{\perp_{\Lambda/I}}\tau_{\Lambda/I}C$.

		\begin{lemma}
			\label{lem:FZaddCnew}
			Let $C$ be a $\tau_{\Lambda/I}$-rigid gen-minimal $\Lambda/I$-module
			and let $Z=\Pmods(\varphi^+(\Gen_{\Lambda/I} C))$.
			Then
			\begin{itemize}
				\item[(a)] $F(Z)\in\add C$ and
				\item[(b)]$\add F(\Pmod(\lperp{\tau Z})) = \add 
				\Pmod(^{\perp_{\Lambda/I}}{\tau_{\Lambda/I} C})$.
			\end{itemize}
		\end{lemma}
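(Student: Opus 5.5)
The plan is to follow the proof of \cref{prop:FZaddC}, now for a possibly decomposable gen-minimal $C$. First I will show that $Z=\Pmods(\varphi^+(\Gen_{\Lambda/I}C))$ satisfies the hypotheses of Lemma~\ref{lem:Fperptaunew}. Since $\Lambda$ is $\tau$-tilting finite, $\varphi^+(\Gen_{\Lambda/I}C)$ is a functorially finite torsion class in $\mods\Lambda$. By Corollary~\ref{cor:liftingtorsion} it equals ${}^{\perp}(\T^{\perp_{\Lambda/I}})$ with $\T=\Gen_{\Lambda/I}C$, and \cref{prop:dirrt-facts}(b) shows it is a torsion class. By the gen-minimal version of the Adachi--Iyama--Reiten correspondence recalled in Section~\ref{sec:background}, $Z$ is then basic, gen-minimal and $\tau$-rigid, and $\Gen Z=\varphi^+(\Gen_{\Lambda/I}C)$. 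Lemma~\ref{lem:Fperptaunew}(b) therefore applies and gives $F({}^{\perp}\tau Z)={}^{\perp_{\Lambda/I}}\tau_{\Lambda/I}C$.

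For part (b), I will argue exactly as in \cref{prop:FZaddC}(a). The module $P({}^{\perp}\tau Z)$ is support $\tau$-tilting, so $F(P({}^{\perp}\tau Z))$ is support $\tau_{\Lambda/I}$-tilting by Proposition~\ref{prop:Fsupporttau}. Lemma~\ref{lem:Gen-preserved} then gives
\[
\Gen F(P({}^{\perp}\tau Z))=F({}^{\perp}\tau Z)={}^{\perp_{\Lambda/I}}\tau_{\Lambda/I}C=\Gen P({}^{\perp_{\Lambda/I}}\tau_{\Lambda/I}C).
\]
Uniqueness in \cite[Thm.~2.7]{air} then yields the equality of additive closures.

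For part (a), I first compute $F(\Gen Z)$. By Lemma~\ref{lem:Gen-preserved}(a), $F(\Gen Z)=\Gen Z\cap\mods\Lambda/I=\varphi^+(\Gen_{\Lambda/I}C)\cap\mods\Lambda/I$, and by \cref{prop:dirrt-facts}(b) this equals $\Gen_{\Lambda/I}C$. This replaces Lemma~\ref{lem:FGenZ}. As in \cref{prop:FZaddC}, I then get $\add F(P(\Gen Z))=\add P(\Gen_{\Lambda/I}C)$. Hence
\[
F(Z)\in\add P(\Gen_{\Lambda/I}C)\cap\add P({}^{\perp_{\Lambda/I}}\tau_{\Lambda/I}C),
\]
the intersection of the co-Bongartz and Bongartz completions of $C$, and \cite[Lem.~1.7]{bhm} gives $F(Z)\in\add C$.

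The step I expect to need the most care is this last one: the statement of \cite[Lem.~1.7]{bhm} for a general, not necessarily indecomposable, $\tau$-rigid $C$. If that lemma is only available for indecomposables, I would fall back on a different argument. By Lemma~\ref{lem:Fperptaunew}(a), $|Z|=|C|$, and $F(Z)$ generates $\Gen_{\Lambda/I}C$. Gen-minimality then forces every indecomposable summand of $C$ to appear in $\add F(Z)$. Since $F(Z)\in\add P(\Gen_{\Lambda/I}C)$, any further summand would have to lie in $\Gen C$ and also in the Bongartz completion. I would rule such summands out by the standard fact that the $\Gen$-of-split-projectives intersection of the two completions is exactly $\add C$.
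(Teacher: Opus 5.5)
Your proposal is correct and follows the paper's argument, which reruns the proof of \cref{prop:FZaddC} using the identity $F({}^{\perp}\tau Z)={}^{\perp_{\Lambda/I}}\tau_{\Lambda/I}C$ supplied by the lattice-theoretic lemma; your direct appeal to Lemma~\ref{lem:Fperptaunew}(b) and your computation $F(\Gen Z)=\varphi^+(\Gen_{\Lambda/I}C)\cap\mods\Lambda/I=\Gen_{\Lambda/I}C$ are exactly what a decomposable $C$ requires, since Corollary~\ref{cor:F-properties} and Lemma~\ref{lem:FGenZ} are only stated in Setup~\ref{setupCZ}. Your fallback is unnecessary and in any case rests on the same fact, which holds for every $\tau$-rigid $C$: an indecomposable $W\notin\add C$ lying in both the Bongartz and co-Bongartz completions would give $\Gen(C\amalg W)=\Gen C$ (since $W\in\Gen C$) and ${}^{\perp}\tau(C\amalg W)={}^{\perp}\tau C$ (since the Bongartz completion is $\tau$-rigid and contains $C\amalg W$), hence $\J(C\amalg W)=\J(C)$, which contradicts Jasso's rank formula for $\J$.
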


		\section{Conjectures}

		\subsection{TF-ordered $\tau$-rigid modules}\label{sec:conj1}
		The concept of TF-ordered $\tau$-rigid modules was introduced by Mendoza and Treffinger
		\cite{mt}. 
		A sequence $(M_1,M_2,\ldots ,M_t)$ of indecomposable $\Lambda$-modules is a \emph{TF-ordered} $\tau$-rigid $\Lambda$-module if $\amalg_{i=1}^t M_i$ is $\tau$-rigid and $M_i\not\in \Gen \amalg_{j=i+1}^t M_j$ for all $i$. 
		Applying \cite[Thm. 5.4]{bm}, Mendoza and Treffinger obtained the following result:
		
		\begin{theorem}
			\cite[Thm.\ 5.1]{mt} \label{thm:TF-bij}
			Let $(M_1,M_2,\ldots ,M_t)$ be a TF-ordered $\tau$-rigid $\Lambda$-module. Then
			$$\omega(M_1,M_2,\ldots ,M_t):=(f_{M_2\amalg \cdots M_t}(M_1),\ldots ,f_{M_t}(M_{t-1}),M_t)$$
			is a $\tau$-exceptional sequence. Furthermore, the map $\omega$ is a bijection from the set of $TF$-ordered $\tau$-rigid $\Lambda$-modules of length $t$ to the set of $\tau$-exceptional sequences of length $t$.
		\end{theorem}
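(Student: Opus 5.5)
The plan is to argue by induction on $t$, following the reduction strategy of \cite[Thm.\ 5.4]{bm}. The main tool is the bijection of \cite[Thm.\ 5.4]{bm}, which is built on Jasso's reduction \cite{jasso}. For an indecomposable $\tau$-rigid module $U$, this bijection is given by $M\mapsto f_U(M)$. Its source is the set of indecomposable modules $M$ with $M\amalg U$ $\tau$-rigid and $M\notin\Gen U$. Its target is the set of indecomposable $\tau_{\J(U)}$-rigid objects of $\J(U)$. More generally, for a $\tau$-rigid $U$ (not necessarily indecomposable), $f_U$ sends basic $\tau$-rigid modules $M$ with $M\amalg U$ $\tau$-rigid and no summand of $M$ in $\Gen U$ bijectively to basic $\tau_{\J(U)}$-rigid objects of $\J(U)$. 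In that situation we also have
\[
\J_{\J(U)}(f_U(M))=\J(M\amalg U).
\]
For $t=1$ there is nothing to prove: a TF-ordered $\tau$-rigid module of length $1$ is an indecomposable $\tau$-rigid module, and $\omega$ is the identity.

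For the inductive step, let $(M_1,\dots,M_t)$ be TF-ordered and put $U=M_t$. The first task is to show that
\[
(f_U(M_1),\dots,f_U(M_{t-1}))
\]
is a TF-ordered $\tau_{\J(U)}$-rigid object of $\J(U)$.
\begin{itemize}
\item Since $M_i\notin\Gen M_t$ for every $i<t$, the module $\amalg_{i<t}M_i$ has no summand in $\Gen U$. Hence $\amalg_{i<t}f_U(M_i)$ is $\tau_{\J(U)}$-rigid, and each $f_U(M_i)$ is indecomposable.
\item For the TF-condition, we want, for $i<j\le t-1$,
\[
f_U(M_i)\in\Gen_{\J(U)}\Bigl(\textstyle\amalg_{i<j<t} f_U(M_j)\Bigr)\iff M_i\in\Gen\Bigl(\textstyle\amalg_{i<j\le t}M_j\Bigr).
\]
The plan is to use that $\Gen(N\amalg U)\cap\J(U)=\Gen_{\J(U)}f_U(N)$ for $N\amalg U$ $\tau$-rigid, which is part of Jasso's description of torsion classes in $\J(U)$ \cite[Thm.\ 3.23]{jasso}. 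One then checks that $f_U$ reflects membership in $\Gen(N\amalg U)$, using that $t_U(M_i)\in\Gen U$.
\end{itemize}

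Next we need the compatibility of iterated torsion-free functors. Write $V=\amalg_{j=i+1}^{t-1}M_j$. The claim is
\[
f^{\J(U)}_{f_U(V)}\bigl(f_U(M_i)\bigr)\;\cong\; f_{V\amalg U}(M_i).
\]
The reason is that $(\Gen(V\amalg U),(V\amalg U)^\perp)$ restricts on $U^\perp$ to the torsion pair $(\Gen_{U^\perp}f_U(V),\,f_U(V)^{\perp}\cap U^\perp)$. Moreover, the composite of the two canonical quotient maps $M_i\to f_U(M_i)\to f^{\J(U)}_{f_U(V)}(f_U(M_i))$ has torsion kernel. The restriction of the torsion pair is the step I expect to be the main obstacle. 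The subtlety is that torsion theory inside $\J(U)$ is taken in $\J(U)$ rather than in $U^\perp$, so one has to verify that the relevant factor modules actually lie in $\J(U)$. For this I would use that $f_U(M_i)\in{}^\perp\tau U$, since $M_i\amalg U$ is $\tau$-rigid, together with the closure of ${}^\perp\tau U$ under quotients.

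Granting these two steps, the induction hypothesis applied in $\J(U)\simeq\mods\Gamma$ (Theorem~\ref{thm:jasso-module-category}) shows that $\omega_{\J(U)}(f_U(M_1),\dots,f_U(M_{t-1}))$ is $\tau$-exceptional in $\J(U)$. By the compatibility above, this sequence equals the first $t-1$ terms of $\omega(M_1,\dots,M_t)$, so $\omega(M_1,\dots,M_t)$ is $\tau$-exceptional by definition.

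For bijectivity, I would construct the inverse recursively. Given a $\tau$-exceptional sequence $(N_1,\dots,N_t)$, set $M_t=N_t$. By induction, $(N_1,\dots,N_{t-1})$ comes from a unique TF-ordered object $(M'_1,\dots,M'_{t-1})$ of $\J(N_t)$. Lift it via the inverse of the $f_{N_t}$-bijection of \cite[Thm.\ 5.4]{bm} to modules $M_i$ with $M_i\notin\Gen N_t$ and $\amalg_i M_i\amalg N_t$ $\tau$-rigid. The Gen-equivalence in the second paragraph shows that $(M_1,\dots,M_t)$ is TF-ordered. The uniqueness in each bijection gives that the two constructions are mutually inverse.
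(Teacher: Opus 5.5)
Your proposal is correct and follows the route the paper indicates. The paper does not prove this statement itself; it quotes it from \cite[Thm.\ 5.1]{mt}, where it is obtained by applying the reduction bijection of \cite[Thm.\ 5.4]{bm}, and your induction on $t$ is that argument written out. You transfer the TF-condition through $f_{M_t}$ using $\Gen(N\amalg U)\cap\J(U)=\Gen_{\J(U)}f_U(N)$ together with extension-closure of the torsion class $\Gen(N\amalg U)$. You then identify the iterated torsion-free parts with $f_{V\amalg U}$, which holds because $f_U(M_i)\in U^{\perp}\cap{}^{\perp}\tau U=\J(U)$.
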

		
		We thus get an induced map $L_I$ from the set of TF-ordered $\tau$-rigid $\Lambda/I$-modules to the set of TF-ordered $\tau$-rigid $\Lambda$-modules given by $L_I(M_1,M_2,\ldots ,M_t)=\omega^{-1}\Delta_I\omega_I(M_1,M_2,\ldots ,M_t)$,
		where $\omega_I$ is the map $\omega$ from \cref{thm:TF-bij} for the algebra $\Lambda/I$.
		
		\begin{conjecture}
			\label{conj:TFordered}
			Let $(M_1,M_2,\ldots ,M_t)$ be a TF-ordered $\tau$-rigid $\Lambda/I$-module and let 
			$$L_I(M_1,M_2,\ldots ,M_t)=(N_1,N_2,\ldots ,N_t).$$
			Then
			$F(N_i)=M_i^{r_i}\amalg X$ where $X\in \add(\amalg_{j=i+1}^t M_j)$ and $r_i\geq 1$ is an integer.
		\end{conjecture}
		
		$$
		\begin{tikzcd}
			\left\{\text{\begin{minipage}{4.2cm}{\begin{center}TF-ordered $\tau$-rigid $\Lambda$-modules of length $t$\end{center}}\end{minipage}}\right\} \arrow[r,"\omega"] & \left\{\text{\begin{minipage}{4.2cm}{\begin{center} $\tau$-exceptional sequences of $\Lambda$-modules of length $t$\end{center}}\end{minipage}}\right\} \\
			\left\{\text{\begin{minipage}{4.2cm}{\begin{center}TF-ordered $\tau$-rigid $\Lambda/I$-modules of length $t$\end{center}}\end{minipage}}\right\} \arrow[r,"\omega_I"] \arrow[u,"L_I",dashed] & \left\{\text{\begin{minipage}{4.5cm}{\begin{center} $\tau$-exceptional sequences of $\Lambda/I$-modules of length $t$\end{center}}\end{minipage}}\right\} \arrow[u,"\Delta_I"] \\
		\end{tikzcd}
		$$
		
		\subsection{Representations of quivers over local algebras}
		
		Let $k$ be an algebraically closed field and $R$ a 
		finite-dimensional commutative local $k$-algebra and let $Q$ be a quiver without oriented cycles. Let $\Lambda$ be the $k$-algebra $RQ\cong R\otimes_k kQ$. We
		recall the following result of Nonis:
		
		\begin{theorem}
			\label{thm:nonis}
			\cite[Thm.\ 5.9]{nonis25}
			The induction functor $\Lambda\otimes_{kQ} -$ gives a bijection $\sigma$ from the set of $\tau$-exceptional sequences of $kQ$-modules of length $t$ to the set of $\tau$-exceptional sequences of $\Lambda$-modules of length $t$.
		\end{theorem}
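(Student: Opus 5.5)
The plan is to proceed by induction on the length $t$, reducing everything to two facts: a bijection on indecomposable $\tau$-rigid modules, and compatibility of the induction functor with $\tau$-perpendicular reduction.

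\emph{Setup.} Write $G=\Lambda\otimes_{kQ}-\cong R\otimes_k-$, which is exact since $R$ is flat over $k$. We have $\Lambda/I\cong kQ$ for $I=\rad R\otimes_k kQ$, and $I\subseteq\rad\Lambda$ because $R$ is local and $\rad R$ is nilpotent. Moreover $F\circ G\cong \mathrm{id}_{\mods kQ}$, where $F=\Lambda/I\otimes_\Lambda-$. Hence $\rk\Lambda=\rk kQ$, and we are exactly in the framework of Theorems~\ref{thm:main1} and~\ref{thm:main2}, except that $\Lambda$ need not be $\tau$-tilting finite.

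\emph{Length one.} Here I would work with $2$-term presilting complexes rather than modules. For an indecomposable $\tau$-rigid $kQ$-module $M$, take a minimal projective presentation $P_1\to P_0\to M\to 0$. Applying $R\otimes_k-$ gives a minimal projective presentation of $G(M)$, since $R$ is local and $G$ sends indecomposable projectives to indecomposable projectives. We have
$$\Hom_{K^b(\mathrm{proj}\,\Lambda)}(R\otimes P_\bullet,R\otimes P_\bullet[1])\cong R\otimes_k\Hom_{K^b(\mathrm{proj}\,kQ)}(P_\bullet,P_\bullet[1])=0,$$
so $G(M)$ is $\tau$-rigid, by the \cite{air} criterion. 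Indecomposability follows because $\End(R\otimes P_\bullet)$ is a quotient-compatible extension of the local ring $\End(P_\bullet)$, and it reduces modulo $\rad R$ to that ring.

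For surjectivity, I would use that $F$ sends $\tau$-rigid modules to $\tau$-rigid modules (\cite[Prop.~5.6(a)]{dirrt}) together with $g$-vectors. Two-term presilting complexes over $\Lambda$ and over $kQ=\Lambda/I$ are in bijection, since reduction modulo a nilpotent ideal lifting idempotents preserves presilting objects, as in Eisele--Janssens--Raedschelders. Under this bijection, $R\otimes-$ is the inverse of reduction. This settles the case $t=1$.

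\emph{Compatibility with reduction (the main step).} I expect the main obstacle to be proving that for an indecomposable $\tau$-rigid $kQ$-module $M$ there are equivalences
$$\J_{kQ}(M)\simeq\mods kQ' \quad\text{and}\quad \J_\Lambda(G(M))\simeq\mods(R\otimes_k kQ')$$
for some acyclic quiver $Q'$, under which $G$ restricts to induction for $kQ'\to R\otimes_k kQ'$. The first equivalence is classical: a $\tau$-perpendicular category of a hereditary algebra is the perpendicular category of an exceptional module (or of a projective), and is equivalent to $\mods kQ'$ for an acyclic quiver $Q'$.

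For the second, I would use Jasso's description via the Bongartz completion $B$ of $P_\bullet$. Silting reduction identifies the reduced algebra with $\End(B)/[P_\bullet]$, and the Bongartz triangle is built from minimal approximations. Both of these commute with the exact, Hom-compatible functor $R\otimes_k-$ on $K^b(\mathrm{proj})$, because $\Hom$-spaces become $R\otimes_k\Hom$. Therefore the Bongartz completion of $R\otimes P_\bullet$ is $R\otimes B$, and the reduced algebra is $R\otimes_k\End(B)/[P_\bullet]\cong R\otimes_k kQ'$. I would also check naturality, namely that the equivalence $\J_\Lambda(GM)\to\mods(R\otimes kQ')$ is $\Hom(f(R\otimes B),-)$ and intertwines $G$; this uses $f_{GM}\circ G\cong G\circ f_M$, which follows from exactness of $G$ and $G(\Gen M)\subseteq\Gen G(M)$.

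\emph{Induction and bijectivity.} Define $\sigma(M_1,\dots,M_t)=(G M_1,\dots,GM_t)$, applying at each stage the induced functor on reduced categories. By the compatibility step, $(GM_1,\dots,GM_{t-1})$ is $\tau$-exceptional in $\J_\Lambda(GM_t)\simeq\mods(R\otimes kQ')$, by the inductive hypothesis for $Q'$. For bijectivity, the last term is handled by the $t=1$ case. Every $\tau$-rigid indecomposable $\Lambda$-module has the form $GM_t$, so the reduced category is always of the form $\mods(R\otimes kQ')$, and induction on $t$ finishes the argument.
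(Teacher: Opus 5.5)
The paper gives no proof of this statement. It is quoted from \cite[Thm.~5.9]{nonis25}, and the paper's own results do not reach it: $RQ$ need not be $\tau$-tilting finite, and even when it is, Theorem~\ref{thm:main1} only yields an injection whose agreement with $\sigma$ is left open as Conjecture~\ref{conj:nonisbijection}. Your sketch is therefore an independent argument, and in outline it is correct, with one justification to repair.

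\textbf{What works.} Everything rests on the natural isomorphism $\Hom_\Lambda(GX,GY)\cong R\otimes_k\Hom_{kQ}(X,Y)$, which is compatible with composition because $R$ is commutative. It gives the following:
\begin{itemize}
\item the presilting property of $R\otimes P_\bullet$;
\item locality of $\End_\Lambda(GM)\cong R\otimes_k\End_{kQ}(M)$;
\item commutation of approximations, and hence of Bongartz completions, with $G$;
\item the equality of the ideal of maps factoring through $\add GM$ with $R\otimes_k[M]$;
\item hence Jasso's algebra is $R\otimes_k kQ'$, with $G$ becoming induction.
\end{itemize}
Working with Bongartz completions rather than with $\tau$ directly is the right choice. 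Indeed, $\tau_\Lambda(GM)\cong DR\otimes_k\tau M$, which is not $G(\tau M)$ unless $R$ is self-injective. For $f_{GM}G\cong Gf_M$ you also need $G(M^\perp)\subseteq (GM)^\perp$, but that is immediate from the same formula.

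\textbf{What to repair.} The weak step is surjectivity in length one. Nilpotence of $I$ and lifting of idempotents only show that reduction preserves presilting objects; they do not make it bijective. The paper's Example~1 ($kA_3$ modulo $\beta\alpha$) has six indecomposable $\tau$-rigid modules upstairs and five downstairs. The Eisele--Janssens--Raedschelders theorem you invoke needs $I$ to be generated by central elements lying in the radical. This holds here, since $I=\rad R\otimes_k kQ$ is generated by $\rad R\otimes 1$, which is central in $\Lambda$ because $R$ is commutative. You should state this explicitly. With it, your induction on $t$ goes through.

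\textbf{Comparison with the paper.} Your route needs no $\tau$-tilting finiteness and gives a sharper compatibility than Theorem~\ref{thm:main2}. You get $FG\cong\mathrm{id}$, and each reduced pair is again of the form $(R\otimes_k kQ',kQ')$. The paper only has $F(Z)\in\add C$ together with an unspecified quotient $\Gamma\to\Gamma/I_Z$.

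Your argument also bears on Conjecture~\ref{conj:nonisbijection}. Using $\tau_\Lambda(GM)\cong DR\otimes_k\tau M$ and $\operatorname{soc}_R DR=k$, one checks that the simple objects of $\J_{kQ}(M)$, inflated to $\Lambda$, lie in $\J_\Lambda(GM)$ and correspond to the simple $R\otimes_k kQ'$-modules. Hence $\J_\Lambda(GM)=\Filt\J_{kQ}(M)$, so $GM=\Pmods({}^{\perp}\J_{kQ}(M))$ is exactly the lift $Z$ of Setup~\ref{setupCZ}. For Dynkin $Q$, where $\Delta_{mQ}$ is defined, your reduction step looks like what is needed to run the recursion and identify $\Delta_{mQ}$ with $\sigma$.
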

		
		Let $m$ be the unique maximal ideal of $R$. Then $R/m\cong k$, since $k$ is algebraically closed, see \cite[Cor. 7.10]{am}. Furthermore, the natural map $\pi:R\rightarrow k$ with kernel $m$ induces a surjective algebra morphism $g:RQ\rightarrow kQ$ with kernel $mQ$. Since $R$ is finite-dimensional and local, we have that $m$ is a nilpotent ideal of $R$. Thus $mQ$ is nilpotent in $RQ$, and 
		hence contained in the radical of $RQ$ (by~\cite[I.1.4]{ass06}(c), for example). Thus we can consider the functor $F:\mods \Lambda\rightarrow \mods (\Lambda/mQ) \cong \mods kQ$ given by $F=(\Lambda/mQ) \otimes_{\Lambda} -$, as in the setup here. It is thus natural to make the following conjecture.
		
		\begin{conjecture}
			\label{conj:nonisbijection}
			The injective map $\Delta_{mQ}$ from Theorem~\ref{thm:main1} coincides with the bijection $\sigma$ from Theorem~\ref{thm:nonis}.
		\end{conjecture}
		
		See Example \ref{ex2} for a related example.
		
		\subsection{Nakayama algebras}
		
		The $\tau$-exceptional sequences for Nakayama algebras have been studied in \cite{msa,bkt,kt}.
		We refer to \cite[Sect. V]{ass06} for definitions and details on this class of algebras. 
		
		Let $C_n$ be the Nakayama algebra given by the quotient of the path algebra of an oriented $n$-cycle by the ideal generated by paths of length $n$. We recall the following result of Kaipel and Terland:
		
		\begin{theorem}\cite[Thm.\ 1.2]{kt}
			\label{thm:kt}
			Let $\Lambda$ be a basic connected Nakayama algebra of rank $n$.
			Suppose that the length of every indecomposable projective $\Lambda$-module is at least $n$. Then there is a bijection $\alpha$ from the $\tau$-exceptional sequences over $C_n$ to the $\tau$-exceptional sequences over $\Lambda$.
		\end{theorem}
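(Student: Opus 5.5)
The plan is to realise this bijection as an instance of the map $\Delta_I$ of Theorem~\ref{thm:main1}. Injectivity will then come for free, and surjectivity will follow from a counting argument.

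First I identify $C_n$ as a factor algebra of $\Lambda$. Since $\Lambda$ is basic, connected, Nakayama of rank $n$, its quiver is either a linearly oriented $A_n$ or an oriented $n$-cycle. In the linear case the projectives other than the one at the source have length $<n$, so the hypothesis forces the quiver to be the $n$-cycle. Let $I\subseteq\Lambda$ be the ideal generated by all paths of length $n$. Because every indecomposable projective has length at least $n$, the relations of $\Lambda$ only involve paths of length $\geq n$. Hence $\Lambda/I\cong C_n$, and $I\subseteq\rad\Lambda$ since $I$ is generated by arrows-paths. Moreover $\Lambda$ is representation-finite, hence $\tau$-tilting finite, so Theorem~\ref{thm:main1} supplies an injective map $\Delta_I$ from $\tau$-exceptional sequences over $C_n$ to those over $\Lambda$. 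I would take $\alpha=\Delta_I$.

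Next I prove surjectivity, and I expect this to be the main step. The key input is that every brick over a Nakayama algebra of rank $n$ has length at most $n$: a uniserial module of length $>n$ has its top appearing again as a composition factor, giving a nonzero non-invertible endomorphism. A uniserial $\Lambda$-module of length $\le n$ is annihilated by paths of length $n$, so every brick of $\Lambda$ is a $C_n$-module. Torsion classes of $\Lambda$ are determined by the bricks they contain. Combined with \cite[Prop.~5.7]{dirrt}, this shows that $F$ induces a lattice isomorphism $\tors\Lambda\to\tors C_n$, i.e.\ $\T\mapsto\T\cap\mods C_n$ is bijective. Equivalently, $\varphi^+$ is inverse to it, and by Corollary~\ref{cor:alternativeZ} the lift $Z$ of $C$ satisfies $\Gen Z=\varphi^+(\Gen_{C_n}C)$. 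In particular, indecomposable $\tau$-rigid modules (equivalently, via split projectives, join-irreducible torsion classes) correspond bijectively. So $\Delta_I$ is bijective for $r=1$.

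For longer sequences I would not redo the recursion in general. Instead I would argue by cardinality. By Theorem~\ref{thm:TF-bij}, $\tau$-exceptional sequences of length $t$ are in bijection with TF-ordered $\tau$-rigid modules of length $t$. A $\tau$-rigid module is the same as a collection of summands of a support $\tau$-tilting module, i.e.\ a face of the lattice of torsion classes together with its covers. The TF-ordering condition $M_i\notin\Gen\amalg_{j>i}M_j$ is expressed purely in terms of the torsion classes $\Gen$ of partial sums. Since $\tors\Lambda\cong\tors C_n$ as lattices, compatibly with $\Gen$ of $\tau$-rigid modules by Lemma~\ref{lem:Gen-preserved}, the two sets of TF-ordered $\tau$-rigid modules have the same finite cardinality. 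Hence the two finite sets of $\tau$-exceptional sequences have equal size, and the injective map $\Delta_I$ is a bijection.

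The main obstacle is making the lattice argument airtight. One must check that the lattice isomorphism really transports $\tau$-rigidity and the TF-ordering condition; I would do this through the brick labelling of covers, using \cite[Thm.~2.7]{air} and \cite[Thm.~1.1]{bhm2}. If that proves delicate, the fallback is the direct observation that all indecomposable $\tau$-rigid $\Lambda$-modules have length $\le n$, hence lie in $\mods C_n$. Then $F(Z)=Z$, and one compares $\tau$-rigidity in both categories via \cite[Prop.~5.6]{dirrt}.
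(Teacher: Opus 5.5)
The paper does not prove this statement. It is quoted from Kaipel--Terland \cite{kt}. The paper only conjectures (Conjecture~\ref{conj:ktbijection}) that $\Delta_I$ coincides with their bijection, and checks bijectivity of $\Delta_I$ by hand in Example~\ref{ex3}. Your route, deriving the statement from Theorem~\ref{thm:main1} plus a count, is therefore genuinely different, and in outline it works.

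Your reductions are sound:
\begin{itemize}
\item the quiver must be the $n$-cycle, and $\Lambda/I\cong C_n$ with $I\subseteq\rad\Lambda$;
\item the bricks of $\Lambda$ are the uniserials of length at most $n$, so they are $C_n$-modules;
\item every torsion class is filtered by its bricks, so $\T\mapsto\T\cap\mods C_n$ is injective, hence a lattice isomorphism by \cite[Prop.~5.7]{dirrt};
\item with Corollary~\ref{cor:alternativeZ}, this makes $\Delta_I$ bijective in length one.
\end{itemize}
For the bare existence claim you do not need $\Delta_I$ at all: equal finite cardinalities already give a bijection. Using $\Delta_I$ buys the stronger fact that $\Delta_I$ itself is bijective here, which the paper does not claim. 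It still says nothing about whether $\Delta_I$ equals Kaipel--Terland's $\alpha$.

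The step you defer, however, is exactly the one the count rests on, and your description of it is not accurate. The join-irreducible labels of the covers of $\T$ only record the split projective summands of $P(\T)$. A non-split projective summand $X$ is invisible unless you can express ${}^\perp\tau X$ in terms of $\Gen X$. Brick labels of covers alone will not give you this.

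Here is one way to supply it. By \cite{jasso}, $[\Gen X,{}^\perp\tau X]\cong\tors\J(X)$, and the $n-1$ atoms of $\tors\J(X)$ are upper covers of $\Gen X$. The Hasse diagram is $n$-regular \cite{air} and $\Gen X$ has a single lower cover, so these are all of its upper covers. Hence ${}^\perp\tau X$ is the join of the upper covers of $\Gen X$. Then $X\amalg Y$ is $\tau$-rigid if and only if $\Gen X\vee\Gen Y\subseteq{}^\perp\tau X\cap{}^\perp\tau Y$. The TF condition reads $\Gen M_i\not\subseteq\bigvee_{j>i}\Gen M_j$. Both conditions are invariant under lattice isomorphisms, so Theorem~\ref{thm:TF-bij} gives equal counts in each length.

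Alternatively, use Proposition~\ref{prop:Fsupporttau}, Lemma~\ref{lem:Gen-preserved}(a) and Proposition~\ref{prop:FZaddC}(b). These show that sending a basic support $\tau$-tilting module $\amalg Z_k$ to $\amalg C_k$, where $Z_k=\Delta_I(C_k)$, corresponds to $\T\mapsto F(\T)$ under \cite[Thm.~2.7]{air}. It is therefore a bijection, and it transports compatibility directly.

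Finally, drop your fallback. It is false that every indecomposable $\tau$-rigid $\Lambda$-module has length at most $n$. The indecomposable projectives are $\tau$-rigid and can be longer: they have length $3>2$ in Example~\ref{ex3}. For such modules $F(Z)\neq Z$.
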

		
		Let $\Lambda$ be a Nakayama algebra as in Theorem~\ref{thm:kt}.
		Then the quiver of $\Lambda$ is either linearly oriented of Dynkin type $A_n$,
		or an oriented $n$-cycle.  Since the length of every indecomposable projective  module is at least $n$, the cyclic case must occur, except when $n=1$.
		Since $\Lambda$ is finite-dimensional, there must be a relation starting at each vertex, and we may take the shortest vanishing paths at each vertex as defining relations (note that this is not necessarily a minimal set of relations). These must be paths of length at least $n$. It follows that $C_n$ is the quotient of $\Lambda$ by the ideal $I$ generated by the paths of length $n$. This ideal is contained in the radical of $\Lambda$ (which is generated by the arrows). We make the following conjecture:
		
		\begin{conjecture}
			\label{conj:ktbijection}
			Let $\Lambda$ be a basic, connected Nakayama algebra of rank $n$.
			Suppose that the length of every indecomposable projective $\Lambda$-module is at least $n$. Let $I$ be the ideal generated by the paths of length $n$, so that $\Lambda/I\cong C_n$.
			Then the injective map $\Delta_I$ from Theorem~\ref{thm:main1} coincides with the bijection in Theorem~\ref{thm:kt}.
		\end{conjecture}
		
		See Example \ref{ex3} for a special case supporting this conjecture. 
		
		\section{Examples}
		
		\subsection{Example 1}
		
		Let $\Lambda = kQ$ where $Q$ is the quiver
		
		\begin{center}
			\begin{tikzcd}
				1 \arrow[r, "\alpha"] & 2 \arrow[r, "\beta"] & 3
			\end{tikzcd}
		\end{center}
		
		and let $\Gamma  = \Lambda/I$, where $I$ is the 
		ideal generated by the composition $\beta \alpha$.
		
		\newcommand{\colwidth}{3.1cm}
		
		\newcommand{\sing}[1]{%
			$\vcenter{\hbox{$\scriptstyle #1$}}$%
		}
		
		\newcommand{\pair}[2]{%
			$\vcenter{\hbox{%
					\begin{tabular}{@{}c@{}}
						$\scriptstyle #1$\\[-4pt]
						$\scriptstyle #2$
					\end{tabular}%
			}}$%
		}
		
		\newcommand{\triple}[3]{%
			$\vcenter{\hbox{%
					\begin{tabular}{@{}c@{}}
						$\scriptscriptstyle #1$\\[-7pt]
						$\scriptscriptstyle #2$\\[-7pt]
						$\scriptscriptstyle #3$
					\end{tabular}%
			}}$%
		}
		
		\newcommand{\sep}{\hspace{3mm}}
		
		\begin{figure}
			\centering
			
			\begin{tabular}{|
					>{\centering\arraybackslash}m{\colwidth}|
					>{\centering\arraybackslash}m{\colwidth}|
					>{\centering\arraybackslash}m{\colwidth}|
					>{\centering\arraybackslash}m{\colwidth}|}
				\hline
				\small{TF-ordered $\tau$-tilting modules for $\Gamma$} & 
				\small{$\tau$-exceptional sequences for $\Gamma$} & 
				\small{$\tau$-exceptional sequences for $\Lambda$} &
				\small{TF-ordered $\tau$-tilting modules for $\Lambda$}
				\\
				\hline
				\sing{3}\sep\pair{2}{3}\sep\pair{1}{2} &
				\sing{3}\sep\pair{2}{3}\sep\pair{1}{2} &
				\sing{3}\sep\pair{2}{3}\sep\triple{1}{2}{3} &
				\sing{3}\sep\pair{2}{3}\sep\triple{1}{2}{3} \\
				\hline
				\sing{3}\sep\pair{1}{2}\sep\pair{2}{3} &
				\sing{3}\sep\sing{1}\sep\pair{2}{3} &
				\sing{3}\sep\sing{1}\sep\pair{2}{3} &
				\sing{3}\sep\triple{1}{2}{3}\sep\pair{2}{3} \\
				\hline
				\pair{2}{3}\sep\sing{3}\sep\pair{1}{2} &
				\sing{2}\sep\sing{3}\sep\pair{1}{2} &
				\sing{2}\sep\sing{3}\sep\triple{1}{2}{3} &
				\pair{2}{3}\sep\sing{3}\sep\triple{1}{2}{3} \\
				\hline
				\pair{2}{3}\sep\pair{1}{2}\sep\sing{3} &
				\sing{2}\sep\pair{1}{2}\sep\sing{3} &
				\sing{2}\sep\pair{1}{2}\sep\sing{3} &
				\pair{2}{3}\sep\triple{1}{2}{3}\sep\sing{3} \\
				\hline
				\pair{1}{2}\sep\sing{3}\sep\pair{2}{3} &
				\sing{1}\sep\sing{3}\sep\pair{2}{3} &
				\sing{1}\sep\sing{3}\sep\pair{2}{3} &
				\triple{1}{2}{3}\sep\sing{3}\sep\pair{2}{3} \\
				\hline
				\pair{1}{2}\sep\pair{2}{3}\sep\sing{3} &
				\sing{1}\sep\sing{2}\sep\sing{3} &
				\sing{1}\sep\sing{2}\sep\sing{3} &
				\triple{1}{2}{3}\sep\pair{2}{3}\sep\sing{3} \\
				\hline
				\pair{2}{3}\sep\sing{3}\sep\pair{1}{2} &
				\sing{2}\sep\sing{3}\sep\pair{1}{2} &
				\sing{2}\sep\sing{3}\sep\triple{1}{2}{3} &
				\pair{2}{3}\sep\sing{3}\sep\triple{1}{2}{3} \\
				\hline
				\pair{2}{3}\sep\pair{1}{2}\sep\sing{2} &
				\pair{2}{3}\sep\pair{1}{2}\sep\sing{2} &
				\pair{2}{3}\sep\triple{1}{2}{3}\sep\sing{2} &
				\pair{2}{3}\sep\triple{1}{2}{3}\sep\sing{2} \\
				\hline
				\pair{1}{2}\sep\pair{2}{3}\sep\sing{2} &
				\sing{1}\sep\pair{2}{3}\sep\sing{2} &
				\sing{1}\sep\pair{2}{3}\sep\sing{2} &
				\triple{1}{2}{3}\sep\pair{2}{3}\sep\sing{2} \\
				\hline
				\sing{3}\sep\pair{1}{2}\sep\sing{1} &
				\sing{3}\sep\pair{1}{2}\sep\sing{1} &
				\sing{3}\sep\triple{1}{2}{3}\sep\sing{1} &
				\sing{3}\sep\triple{1}{2}{3}\sep\sing{1} \\
				\hline
				\pair{1}{2}\sep\sing{3}\sep\sing{1} &
				\pair{1}{2}\sep\sing{3}\sep\sing{1} &
				\pair{1}{2}\sep\sing{3}\sep\sing{1} &
				\triple{1}{2}{3}\sep\sing{3}\sep\sing{1} \\
				\hline
				\pair{1}{2}\sep\sing{1}\sep\sing{3} &
				\pair{1}{2}\sep\sing{1}\sep\sing{3} &
				\pair{1}{2}\sep\sing{1}\sep\sing{3} &
				\triple{1}{2}{3}\sep\sing{1}\sep\sing{3} \\
				\hline
				&
				&
				\triple{1}{2}{3}\sep\pair{1}{2}\sep\sing{1} &
				\triple{1}{2}{3}\sep\pair{1}{2}\sep\sing{1} \\
				\hline
				&
				&
				\sing{2}\sep\triple{1}{2}{3}\sep\pair{1}{2} &
				\sing{2}\sep\triple{1}{2}{3}\sep\pair{1}{2} \\
				\hline
				&
				&
				\triple{1}{2}{3}\sep\sing{2}\sep\pair{1}{2} &
				\triple{1}{2}{3}\sep\sing{2}\sep\pair{1}{2} \\
				\hline
				&
				&
				\triple{1}{2}{3}\sep\sing{1}\sep\sing{2} &
				\triple{1}{2}{3}\sep\pair{1}{2}\sep\sing{2} \\
				\hline
			\end{tabular}
			
			\caption{For the algebras $\Lambda$ and $\Gamma$ in Example 1, the third column gives the lifts, using $\Delta_I$, of the $\tau$-exceptional sequences in the second column. The corresponding TF-ordered $\tau$-tilting modules are also given.}
			\label{fig:A3,bijections}
		\end{figure}
		
		Both $\Lambda$ and $\Gamma$ are Nakayama algebras. Hence, it is natural to denote the indecomposable objects in their module categories by their composition series. E.g. 
		$X= \text{\triple{1}{2}{3}}$ denotes a module of length $3$, with the simple $S_1$ in the top (i.e. $S_1$ is the unique simple factor module), and the simple $S_3$ in the socle (i.e. $S_3$ is the unique simple submodule).
		
		We note first that $\mods \Lambda$ has six indecomposable objects,
		$$\left\{\text{\triple{1}{2}{3}}, \, \text{\pair{1}{2}}, \,\text{\pair{2}{3}}, \, \text{\sing{1}}, \, \text{\sing{2}}, \, \text{\sing{3}} \right\} $$
		while the indecomposables in $\mods \Gamma$ are the same, except for
		\triple{1}{2}{3}. All indecomposables are $\tau$-rigid in both
		$\mods \Gamma$ and $\mods \Lambda$.
		
		We next note that 
		$\phi^+\left(\Gen \, \text{\pair{1}{2}}\right) = \Gen
		\text{\triple{1}{2}{3}}$. 
		So the lifting of $C= \text{\pair{1}{2}}$
		is $Z= \text{\triple{1}{2}{3}}$.
		For the other four indecomposables in $\mods \Gamma$, the lifting is just the identity.
		
		This implies that the two $\tau$-exceptional sequences in $\mods \Lambda$ ending in \pair{1}{2} do not appear in the image of $\Delta_I$. 
		
		We next note that $\J_{\Gamma}(2)=
		\add\left({\text{\pair{2}{3}} \amalg \text{\sing{1}}}\right)$, which is a semisimple module category of rank $2$, and hence has two 
		$\tau$-exceptional sequences.
		However, the category $\J_{\Lambda}(2)$ has the indecomposables 
		$\left\{\text{\pair{2}{3}}, \, \text{\triple{1}{2}{3}}, \text{\sing{1}}\right\}$ and is equivalent to the module category of an algebra of Dynkin type $A_2$. Note that $A_2$ has three exceptional sequences.
		We have that the
		$\tau$-exceptional sequence
		$\left(\text{\triple{1}{2}{3}},\text{\sing{1}},\text{\sing{2}}\right)$
		is not in the image of $\Delta_I$.
		
		Similarly, we have that $\J_{\Gamma}(1)$ is semisimple, while 
		$\J_{\Lambda}(1)$ is equivalent to 
		a module category of type $A_2$. The $\tau$-exceptional sequence $\left(\text{\triple{1}{2}{3}}, \text{\pair{1}{2}}, \text{\sing{1}}\right)$
		is not in the image of $\Delta_I$.
		
		Together, these observations explain why there are four 
		$\tau$-exceptional sequences in $\mods \Lambda$ that do not occur in the image of $\Delta_I$. See the table in Figure \ref{fig:A3,bijections}, which also gives the explicit connection 
		between TF-ordered $\tau$-tilting modules and $\tau$-exceptional sequences, as described in general in Section \ref{sec:conj1}. 
		
		\subsection{Example 2}\label{ex2}

		Consider the quiver $Q$ given by
		
		$$
		\begin{tikzpicture}
			\node (1) at (0,0) {$1$};
			\node (2) at (4,0) {$2$};
			
			\draw[->] (1) -- node[above] {$\alpha$} (2);
			
			\draw[->] 
			(1) to[out=140,in=40,looseness=12] 
			node[above left] {$\beta$} (1);
			
			\draw[->] 
			(2) to[out=140,in=40,looseness=12] 
			node[above right] {$\gamma$} (2);
		\end{tikzpicture}
		$$
		
		Let $L$ be the ideal generated by the paths $\gamma \alpha \beta, \beta^2, \gamma^2$, and let $L'$ be the ideal generated by the loops 
		$\beta$ and $\gamma$, and let $\Lambda = kQ/L$. 
		
		Furthermore, consider the ideal $I= L'/L$ in $\Lambda$.
		Then $\Lambda/I$ is isomorphic to the hereditary path algebra of
		type $A_2$. 
		
		In this example, one can verify by direct computation that there is a bijection between the indecomposable 
		$\tau$-rigid modules in $\mods \Lambda/I$ and those in $\mods \Lambda$, and the map $\Delta_I$ is also a bijection. 
		Note that this is a string algebra, so the computations of $\tau$-rigid modules can be carried out using \cite{string}. 
		
		Furthermore, let
		\[
		B=R\otimes_k kQ',
		\]
		where $R=k[x]/(x^2)$ and $Q'$ is a quiver of Dynkin type $A_2$.
		Then
		\[
		B\cong kQ/K,
		\qquad
		K=\langle \gamma\alpha-\alpha\beta,\beta^2,\gamma^2\rangle.
		\]
		Since $L\subseteq K$, the algebra $B$ is a factor algebra of $\Lambda$.
		The algebra $B$ is a special case of Theorem~7.3, and hence its
		$\tau$-exceptional sequences are in bijection with those of the
		hereditary path algebra of type $A_2$.

		\subsection{Example 3}\label{ex3}

		Consider the quiver $Q$ given by 
		
		$$\begin{tikzpicture}
			\node (1) at (0,0) {$1$};
			\node (2) at (2,0) {$2$};
			
			\draw[->] (1) to[bend left=20] node[above] {$\alpha$} (2);
			\draw[->] (2) to[bend left=20] node[below] {$\beta$} (1);
		\end{tikzpicture}$$
		
		Let $J$ be the ideal in $kQ$ generated by the arrows, and consider the ideal $I= J^2/J^3$ in $\Lambda = kQ/J^3$.
		
		Both $\mods \Lambda$ and $\mods \Lambda/I$ have four indecomposable $\tau$-rigid objects, the two simples and the two indecomposable projectives.
		
		Then it is easily verified that $\Delta_I$ is a bijection, and is 
		given by replacing each indecomposable projective object in $\mods \Lambda/I$ with the corresponding projective in $\mods \Lambda$, and similarly for simples.
		
		Note that this example is a special case of a more general 
		construction for Nakayama algebras given by Kaipel and Terland \cite{kt}, and illustrates
		\cref{conj:ktbijection}.
		
		\subsection*{Use of AI}
		The authors used ChatGPT (GPT-6) for language editing and suggestions to improve the presentation of the manuscript.

	\end{document}